\let\c@table\c@figure
\pretocmd{\section}{\addtocontents{toc}{\protect\addvspace{5\p@}}}{}{}
\newcommand\va[1]{\todo[color=green!40]{#1}} 
\date{December 1, 2021}
\title[Compactification of moduli of K3 surfaces]{Stable pair 
  compactification of\\ moduli of K3 surfaces of degree 2}
\author{Valery Alexeev}
\email{valery@uga.edu}
\address{Department of Mathematics, University of Georgia, Athens GA
  30602, USA}
\author{Philip Engel}
\email{philip.engel@uga.edu}
\address{Department of Mathematics, University of Georgia, Athens GA
  30602, USA}
\author{Alan Thompson}
\email{a.m.thompson@lboro.ac.uk}
\address{Department of Mathematical Sciences, Loughborough University, 
Loughborough, Leicestershire, LE11 3TU, UK}
\begin{document}
\numberwithin{equation}{section}

\begin{abstract}
  We prove that the universal family of polarized K3 surfaces of
  degree 2 can be extended to a flat family of stable KSBA pairs
  $(X,\epsilon R)$ over the toroidal compactification associated to
  the Coxeter fan. One-parameter degenerations of K3 surfaces in this
  family are described by integral-affine structures on a sphere with
  $24$ singularities.
\end{abstract}

\maketitle 
\tableofcontents

\section{Introduction}
\label{sec:intro}

By the Torelli theorem \cite{piateski-shapiro1971torelli},
the coarse moduli space $F_{2d}$ of primitively polarized K3 surfaces $(X,L)$
of degree $L^2=2d$ is the quotient $F_{2d}=\Gamma\backslash\bD$ of
a 19-dimensional Hermitian symmetric domain by an arithmetic group.
In its capacity as an arithmetic quotient, there are the Baily-Borel
$\oF_{2d}\ubb$ \cite{baily1966compactification-of-arithmetic}
and infinitely many toroidal $\oF_{2d}^\tor$ \cite{ash1975smooth-compactifications} compactifications of $F_{2d}$.
These were unified by the more general semitoric compactifications
$\oF_{2d}^\semi$ \cite{looijenga2003compactifications-defined2} of Looijenga.

The geometry of these Hodge-theoretic compactifications can be described explicitly.
For instance, the incidence structure of the boundary strata is encoded by 
combinatorial information, called a semifan $\mathfrak{F}^\semi$.
But \emph{a priori},  semitoric compactifications are not modular---the boundary
points need not parameterize some geometric
generalization of a K3 surface.

On the other hand, if we canonically choose for every polarized K3 surface $(X,L)$
an effective divisor $R\in |NL|$ in a fixed multiple of the polarization, we get a
geometrically meaningful compactification $F_{2d}\hookrightarrow \oF^{\slc}_{2d}$ by taking the closure
of the space of pairs $(X,\epsilon R)$ in the moduli space of all KSBA stable pairs.
These are pairs with semi log canonical (slc) singularities and ample log
canonical class $K_X+\epsilon R$, see e.g.
\cite{kollar1988threefolds-and-deformations,
 kollar2022book-on-moduli}, \cite{alexeev1996moduli-spaces,
 alexeev2006higher-dimensional-analogues}. Generally, it is very
hard to describe the boundary of $\oF_{2d}^\slc$ and the surfaces
appearing over it.

Thus, finding compactifications of K3 moduli which are
both Hodge-theoretic and algebro-geometric has been
a central, and largely open, motivating question:

\begin{question}\label{que:main-question}
Do $\oF_{2d}^\slc$ and $\oF_{2d}^\tor$ coincide for appropriate choices
of divisor $R$ and fan $\fF$? If so, what are the fibers over the toroidal boundary strata?
\end{question}

For the moduli space~$A_g$ of principally polarized abelian varieties (ppavs), 
these questions were answered affirmatively in \cite{alexeev2002complete-moduli}:
On a ppav $(X,L)$, we choose the unique theta divisor $R=\Theta\in |L|$ in the principal
polarization. Then the closure of the pairs $(X,\epsilon \Theta)$ in the space
of KSBA stable pairs coincides (up to normalization) with the toroidal compactification
associated to the second Voronoi fan.

In this paper, we answer Question~\ref{que:main-question}
affirmatively for the moduli space $F_2$.  A K3 surface
$(X,L)$ of degree $2$ is canonically equipped with an involution
$\iota$, switching the sheets of $\phi_{|L|}$. So its ramification divisor
$R={\rm Fix}(\iota)\in |3L|$ is uniquely determined by $(X,L)$. 
Thus, the closure of the space of pairs $(X,\epsilon R)$ gives a geometric
compactification of $F_2$.

On the other hand, there is a natural choice of toroidal compactification. A
fan is given by an $O(N)$-invariant polyhedral decomposition of the rational closure of
the positive cone in $N:=H\oplus E_8^2\oplus A_1$ which is a hyperbolic lattice of
 signature $(1,18)$.  Then $N$ is a hyperbolic root lattice and we define the Coxeter fan
$\mathfrak{F}_{\cox}$ to have walls equal to the perpendiculars of the roots,
i.e.~ vectors $r\in N$ of norm $-2$. Our main result is:

\begin{theorem}\label{thm:family}
There is a semifan for which $\nu\colon \oF_2^\semi\to  \oF_2^\slc $
is the normalization of the KSBA compactification
associated to the ramification divisor $R$. The Coxeter fan refines
this semifan, and hence there is a family of stable pairs over the associated 
toroidal compactification $\oF_2^\tor$.

The KSBA-stable surfaces over the boundary of $\oF_2^\tor$ admit completely
explicit descriptions, in terms of sub-Dynkin diagrams of the
Coxeter diagram for $N$.
\end{theorem}

For a generic K3 surface of degree 2, the quotient $Y=X/\iota$ is
isomorphic to $\bP^2$, and the double cover is branched in a sextic
curve $B$.  The pair $(X,\epsilon R)$ is stable iff the pair
$(Y, \frac{1+\epsilon}2 B)$ is. Hacking \cite{hacking2004compact-moduli}
defined and studied the stable pair compactification
$\oM(\bP^2, d)$ for the pairs $(\bP^2, \frac{3+\epsilon}{d} C_d)$,
where $C_d$ is a curve of degree~$d$. Then the space
$\oF_2^\slc$ is the special case $d=6$.
Hacking provides a complete description of
$\oM(\bP^2,d)$ for $d=4,5$ and a fairly complete one for $3\nmid~d$.
Some examples of degenerate surfaces for $d=6$ are given
in \cite{hacking2004a-compactification-of-the-space},
but the problem of giving a complete description of $\oM(\bP^2,6)$
remained open. Theorem~\ref{thm:family} provides such a description.

\medskip

A moduli space related to $\oF_{2d}^\slc$ is the compactified
space $\oP_{2d}$ of K3 pairs $(X,\epsilon D)$ where $D\in|L|$
is an \emph{arbitrary} divisor in the polarization class. This space
has dimension $20+d$ versus $19$ for $\oF_{2d}$.  Laza
\cite{laza2016ksba-compactification}, building on the work of
Shah \cite{shah1980complete-moduli} and Looijenga
\cite{looijenga2003compactifications-defined2}, described
~$\oP_2$ and the degenerate pairs at the boundary.
Our constructions are unrelated, since the ramification
divisor $R$ lies in~$|3L|$.

Our compactifications of the universal family over $F_2$
provide toroidal, semitoric, and stable pair compactifications for any subfamily. 
Among them is the Heegner divisor $F_\el\subset F_2$ of elliptic K3 surfaces. 
Theorem~\ref{thm:family} directly generalizes to these
subfamilies. In particular it leads to three compactifications of $F_\el$
which are discussed further in \cite{alexeev2020compactifications-moduli}.

The compactification $\oF_\el^\slc$ induced by
$\oF_2^\slc$ is for the polarizing divisor equal to the trisection of
nontrivial $2$-torsion. Stable pair compactifications of $F_\el$ for different choices of
polarizing divisors, weighted sums of the section and fibers, were
investigated by Brunyate \cite{brunyate15modular-compactification},
Ascher-Bejleri \cite{ascher2019compact-moduli}
and \cite{alexeev2020compactifications-moduli}, with a 
description of the surfaces appearing on the boundary.

\medskip

We now briefly explain our approach and features that parallel or contrast
the case of principally polarized abelian varieties.

One-parameter degenerations of ppavs admit a toric description, due to
Mumford \cite{mumford1972an-analytic-construction}.  Let
$M\simeq\bZ^g$ be a fixed lattice, and $N=M^*$ be its dual.  The Voronoi fan
$\fF^\vor$ is supported on the rational closure $\ocC$ of the cone of
positive definite symmetric forms
$\pC= \{Q\colon M\times M\to\bR,\ Q>0\},$ equivalently of positive
symmetric maps $f_Q\colon M\to N_\bR$.  Classically, a positive
semi-definite quadratic form $Q$ defines two dual polyhedral
decompositions of $M_\bR$, periodic with respect to translation by
$M$: Voronoi and Delaunay,
cf. \cite{voronoi1908nouvelles-applications,
  voronoi1908nouvelles-applications} or
\cite{alexeev1999on-mumfords-construction}.  As $Q$ varies
continuously, so does $\Vor Q$, but the set of possible Delaunay
decompositions is discrete.  Locally closed cones of the fan
$\fF^\vor$ are precisely the subsets of $\ocC$ where the combinatorial
type of $\Vor Q$ stays constant, or equivalently where $\Del Q$ stays
constant.

A one-parameter degeneration $(X_t,\epsilon\Theta_t)$ of ppavs with an
integral monodromy vector $Q\in\cC$ can be written as a
$\bZ^g$-quotient of an infinite toric variety whose fan in
$\bR\oplus N_\bR$ is the cone over a shifted Voronoi decomposition
$\big(1, \ell + f_Q(\Vor Q) \big)$, see
\cite[1.8]{alexeev1999on-mumfords-construction}.  Mikhalkin-Zharkov
\cite{mikhalkin2008tropical-curves} called the quotient
\begin{displaymath} (X_\oldtrop,\Theta_\oldtrop) = \big( N_\bR, \ell +
f_Q(\Vor Q)\big) / f_Q(M)
\end{displaymath} a tropical principally polarized abelian variety. It is an
integral-affine torus $X_\oldtrop = N_\bR/f_Q(M) \simeq (S^1)^g$ with
a tropical divisor $\Theta_\oldtrop$ on it. Then $\Theta_\oldtrop$ induces a
cell decomposition of $X_\oldtrop$ which is the dual complex of the
singular central fiber $(X_0,\epsilon\Theta_0)$. The normalization of
each component of $X_0$ is a toric variety, whose fan is modeled by
the corresponding vertex of $\Theta_\oldtrop$.

\smallskip

Kontsevich and Soibelman proposed in
\cite{kontsevich2006affine-structures} that for K3 surfaces, the real
torus $X_{\rm trop}$ should be replaced by an integral-affine structure
with 24 singular points on a sphere $S^2$ (let us call it an $\ias$
for short). This fits into the general framework of the Gross-Siebert
program \cite{gross2003affine-manifolds}, which seeks to understand
mirror symmetry near a maximally unipotent degeneration of Calabi-Yau
varieties via tropical and integral-affine geometry.

\smallskip 

By work of Kulikov \cite{kulikov1977degenerations-of-k3-surfaces},
Persson-Pinkham \cite{persson1981degeneration-of-surfaces}, and
Friedman-Miranda \cite{friedman1983smoothing-cusp} it is understood
that a triangulated two-sphere is the combinatorial model for a {\it Type III Kulikov degeneration}:
A $K$-trivial, semistable, maximally unipotent, one-parameter family
$\mathcal{X}\rightarrow (C,0)$ of degenerating K3 surfaces.
In fact, the dual complex $\Gamma(\mathcal{X}_0)$ of the central fiber 
admits the structure of a triangulated $\ias$, cf. \cite{engel2018looijenga} and
\cite{gross2015mirror-symmetry-for-log}, which encodes
the combinatorial information of $\cX_0$. As for ppavs, one uses toric geometry
and the triangulation to build the central fiber $\mathcal{X}_0$.
The main complication for K3s is that an integral-affine structure on $S^2$ necessarily
has singularities, whereas an integral-affine structure on $(S^1)^g$ is nonsingular.

Conversely, from a triangulated $\ias$ $B$ one can reconstruct a
surface $\cX_0$ satisfying $\Gamma(\cX_0)=B$,
which smooths to a Type III degeneration by \cite{friedman1983global-smoothings}.
This ``reconstruction" procedure was used in \cite{engel2018looijenga, engel2021smoothings}
to study deformations and smoothings of cusp singularities via a
crepant resolution of the smoothing. The key
innovation in this paper is to introduce a {\it integral-affine divisor} on
an $\ias$: A weighted $1$-dimensional subcomplex $R_\trop\subset B$
which is balanced at its vertices. The Kulikov degenerations in \cite{engel2018looijenga,
  engel2021smoothings} used to study cusp
singularities were only analytic---in fact non-algebraizable
because the central fiber contains a Type VII surface, so
there was no integral-affine divisor.

\smallskip

For each vector in a connected component $\pC\subset\{\vec{a}\in N\otimes \R \mid \vec{a}^2>0\}$,
we construct an $\ias$ $B(\vec{a})$ with up to 24 singularities, together with an integral-affine
divisor $R_\trop$. As $\vec{a}\in\pC$ varies continuously, so does the pair
$\big(B(\vec{a}), R_\trop\big)$. Dual to the polyhedral decomposition
of $B(\vec{a})$ induced by $R_\trop$ is a discrete
subdivision of $S^2$ with 24 singularities.
The set of the
dual subdivisions is discrete. Thus, the family of $(B(\vec{a}), R_\trop)$ varying continuously
over $\pC$ are analogues of $\Vor Q$, and the dual subdivisions
are the analogues of $\Del Q$.

This family of $\ias$ with integral-affine divisors extends over the
rational closure~$\opC$ of the positive cone. As $\vec{a}$ approaches
a cusp of ~$\opC$, the sphere $B$ collapses to a segment, which are
dual complexes of Type II degenerations of K3 surfaces.
The cones of the Coxeter fan are exactly the
subsets of $\opC$ where the combinatorial type of the pair
$(\ias(\vec{a}),R_\trop)$ is constant, resp. where the dual subdivision
is constant, in complete analogy with the second Voronoi fan for
ppavs.

When the vector $\vec{a}$ is integral and satisfies a certain parity
condition, a triangulation
of $B(\vec{a})$ into elementary lattice triangles defines a combinatorial type of
Kulikov model. By surjectivity of an appropriate period map,
cf.~\cite{friedman1986type-III}, these Kulikov models describe all
one-parameter degenerations of K3 surfaces which a given Picard-Lefschetz
transformation, encoded in the vector $\vec{a}$.
The canonical models of these Kulikov models are the stable pairs at
the boundary of KSBA moduli. We
describe explicitly what curves and components get contracted on the
Kulikov model to produce the stable model.

Our $\ias$ are quite different from those appearing in
\cite{odaka2021collapsing-k3}. The main difference is that
our pairs $(B(\vec{a}),R_\trop)$ vary in a PL manner, and so
define a polyhedral decomposition of $\opC$.

\medskip

The plan of the paper is as follows. In Section~\ref{sec:kulikov}, we
recall the definition of Kulikov models and discuss their connection
to integral-affine structures on $S^2$.
Using symplectic geometry, 
we state and prove the Monodromy Theorem, allowing
one to concretely compute the monodromy invariant of a
Kulikov degeneration.

In Section~\ref{sec:compactifications} we recall various
compactifications of moduli spaces as they apply to K3 surfaces of
degree~2, and prove some auxiliary results about
them. Section~\ref{sec:reflection-fan} lays out the combinatorics of
the Coxeter fan and the corresponding toroidal compactification
$\oF_2^\tor$ in detail, along with a semitoric compactification $\oF_2^\semi$.

In Section~\ref{sec:mirror-k3} we discuss a one-dimensional family of
K3 surfaces with Picard rank 19 that is mirror-symmetric to~$F_2$. For
a general surface in this family its nef cone is isomorphic to a
fundamental chamber of the Coxeter fan.

In Section~\ref{sec:ias-over-coxeter} we apply the general theory
of polarized $\ias$ to the case at hand, building the family of pairs
$(B(\vec{a}),R_\trop)$ over the Coxeter fan $\fF^\cox$. We interpret an
integral vector $\vec{a}$ in this fan as a combinatorial type of Kulikov
model of K3 surfaces with the monodromy vector~$\vec{a}$. 
In Section~\ref{sec:degenerate-k3s} we describe explicitly the resulting
stable models, in terms of the $ADE$ and $\wA\wD\wE$ surfaces of
\cite{alexeev17ade-surfaces}.

Finally, in Section~\ref{sec:family}, we prove Theorem~\ref{thm:family}.
Throughout, we work over~$\bC$.

\section{Kulikov models and $\ias$}
\label{sec:kulikov}

\subsection{Kulikov models and anticanonical pairs}\label{subsec:kulikov}
One of the first results about degenerations of K3 surfaces is the
well-known theorem of Kulikov and Persson-Pinkham
\cite{kulikov1977degenerations-of-k3-surfaces,
  persson1981degeneration-of-surfaces}.

\begin{theorem}\label{kpp-thm} Let $\mathcal{X}\rightarrow (C ,0)$ be a flat proper family
  over a germ of a curve such that the fibers of $\mathcal{X}^*\rightarrow
  C^*=C\setminus 0$
  are projective K3 surfaces. Then there is a finite ramified base change
  $(C',0) \to (C,0)$ and a birational modification
  $\cX'\to \cX\times_C C'$ such that $\pi\colon\cX'\to C'$ is
  semistable (a smooth threefold with $\cX_0'$ a reduced normal
  crossing divisor) with $\omega_{\cX'/C'}\simeq \cO_{\cX'}$.
\end{theorem}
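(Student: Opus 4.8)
The plan is to first pass to a semistable model by general principles, and then to trivialize the relative dualizing sheaf through a sequence of birational modifications special to the K3 case. For the first step I would invoke the semistable reduction theorem of Kempf--Knudsen--Mumford--Saint-Donat: after a finite ramified base change $(C',0)\to(C,0)$ and a proper birational modification of $\cX\times_C C'$, one obtains $\pi\colon\cX'\to C'$ with $\cX'$ a smooth threefold and $\cX'_0=\sum_i V_i=\operatorname{div}(\pi^*t)$ a reduced simple normal crossing divisor. No further base change is needed after this; everything that follows is purely birational over $C'$.

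For the second step, note that since the general fibre is a K3 surface, $\omega_{\cX'/C'}$ restricts trivially to $\cX'_t$ for $t\ne 0$, so $\omega_{\cX'/C'}\cong\cO_{\cX'}(D)$ with $D=\sum_i a_i V_i$ supported on $\cX'_0$; because $\cX'_0$ is a principal divisor we may subtract a multiple of $\sum_i V_i$ and normalize so that $a_i\ge 0$ and $\min_i a_i=0$. The aim is to reach $D=0$, which is precisely the assertion $\omega_{\cX'/C'}\cong\cO_{\cX'}$. Using $\cO_{\cX'}(\sum_i V_i)\cong\cO_{\cX'}$ and $\omega_{\cX'}\cong\omega_{\cX'/C'}$ near $\cX'_0$, adjunction expresses $\omega_{V_i}$ in terms of the double curves $V_i\cap V_j$ and the differences $a_i-a_j$; in particular, if $D\ne 0$, then any component $V_i$ with $a_i$ maximal is a rational surface whose anticanonical class strictly dominates the sum of its double curves, so $V_i$ is of one of only a few types ($\bP^2$, a Hirzebruch surface, a $\bP^1$-bundle over a curve, or a blow-up of one of these at finitely many points). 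I would then remove such a $V_i$ by an \emph{elementary modification}: after possibly blowing up to put the configuration in good position, either contract $V_i$ divisorially, or flop a chain of rational curves contained in the double locus. The outcome is again a smooth semistable threefold over $C'$ in which either $\sum_i a_i$ or the number of components of the support of $D$ has strictly decreased, so iterating reaches $D=0$; at that point $\omega_{\cX'/C'}\cong\cO_{\cX'}$ and the components of $\cX'_0$ are anticanonical pairs.

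The hard part will be the bookkeeping in the second step: one must show that at each stage the required contraction or flop actually exists, that it preserves smoothness and the reduced normal crossing property of the central fibre, that it strictly decreases a well-chosen invariant, and that the procedure terminates. Conceptually this is a relative minimal model program for $\cX'\to C'$ -- since $K_{\cX'/C'}$ is effective and vertical whereas the general fibre already has numerically trivial canonical class, the MMP must contract exactly the excess components -- but extracting the \emph{smooth} semistable Kulikov model, rather than merely a minimal model over $C'$, forces the explicit hands-on modifications of Kulikov and Persson--Pinkham, organized as a case analysis of the configurations of components meeting along the double curves. The remaining ingredients -- semistable reduction itself, the adjunction computation, and triviality of $\omega$ once $D=0$ -- are formal.
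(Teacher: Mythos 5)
The paper does not prove this statement: it is quoted as the classical theorem of Kulikov and Persson--Pinkham, with a citation to \cite{kulikov1977degenerations-of-k3-surfaces, persson1981degeneration-of-surfaces} in place of a proof. So the comparison can only be with the strategy of those references, and your two-step outline (semistable reduction \`a la Kempf--Knudsen--Mumford--Saint-Donat, then birational modifications over $C'$ to kill the vertical part $D=\sum a_iV_i$ of $\omega_{\cX'/C'}$) is indeed the correct skeleton of that classical argument; your adjunction computation showing that a component with $a_i$ maximal has $-K_{V_i}=\sum_j(1+a_i-a_j)(V_i\cap V_j)$ effective and dominating the double locus is the right starting point.

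The gap is that essentially all of the mathematical content of the theorem sits inside the step you defer as ``bookkeeping.'' Concretely: (i) from $-K_{V_i}$ effective and nonzero you only get that $V_i$ is ruled (your own list includes $\bP^1$-bundles over curves of positive genus, so ``rational'' is not automatic), and one must analyze how $V_i$ meets its neighbours before any contraction or flop can be exhibited; (ii) the required modifications are in general not projective over $C'$ --- the flops take you out of the category of projective families, so the output is only an algebraic space or complex manifold, and one must check at each stage that smoothness and the reduced normal crossing property of the central fibre survive; (iii) termination needs an actual decreasing invariant, and $\sum_i a_i$ alone does not obviously work since a single modification can create new components; (iv) your assertion that no further base change is needed after semistable reduction is exactly the kind of claim that has to be extracted from the case analysis rather than assumed. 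The modern shortcut --- run the relative semistable MMP in dimension $3$ (Kawamata, Shokurov) to get a minimal model with $K_{\cX'/C'}\equiv 0$ --- also does not finish the job, because the minimal model has terminal singularities and a non-normal-crossing fibre in general, and upgrading it to a genuinely smooth semistable model is again the Kulikov/Persson--Pinkham analysis. As written, your proposal is a correct plan with the proof of its key step missing.
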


Moreover, by Shepherd-Barron
\cite{shepherd-barron1981extending-polarizations}, for a relatively
nef line bundle $\cL^*$ on $\mathcal{X}^*\rightarrow C^*$ there is a model as above
to which $\cL^*$ extends as a nef line bundle $\cL$.
 
\begin{definition}\label{Kulikov-def} A degeneration $\mathcal{X}\rightarrow (C,0)$
  satisfying the conclusion of the theorem is a {\it Kulikov
    degeneration} and we call the central fiber a {\it Kulikov surface}. \end{definition}

Let $\log T$ be
the nilpotent logarithm of the unipotent Picard-Lefschetz transformation
$T\,:\,H^2(\mathcal{X}_t,\Z)\rightarrow H^2(\mathcal{X}_t,\Z)$. There
are three possible cases for the order of $\log T$, called Types
I, II, III:

\begin{enumerate}

\item[(I)] If $\log T=0$, then $\mathcal{X}_0$ is a smooth K3 surface. \vspace{2pt}

\item[(II)] If $(\log T)^2=0$ but $\log T\neq 0$, then
  $\mathcal{X}_0=\bigcup_{i=1}^n V_i$ is a chain of surfaces with
  dual complex a segment. The ends $V_1$ and $V_n$ are
  rational and $V_i$ for $i\neq 1,n$ are birational
  to $E\times \mathbb{P}^1$ for a fixed elliptic curve $E$. The
  double curves $D_{i,i+1}:=V_i\cap V_{i+1}$ are isomorphic to $E$,
  the union of the double curves lying on $V_i$ is an
  anticanonical divisor,
  and $$D_{i,i+1}\big{|}_{V_i}^2+D_{i,i+1}\big{|}_{V_{i+1}}^2=0.$$

\item[(III)] If $(\log T)^3=0$ but $(\log T)^2\neq 0$, then
  $\mathcal{X}_0=\bigcup_{i=1}^n V_i$ is a union of rational surfaces
  whose dual complex is a triangulation of the sphere. The union of
  all double curves $D_{ij}:=V_i\cap V_j$ lying on (the normalization of) $V_i$
  form an anticanonical cycle of rational curves. Declaring $D_{ij}
  \subset V_i$ and $D_{ji}\subset V_j$ and $d_{ij} := -2p_a(D_{ij})-D_{ij}^2$,
  we have $$d_{ij}+d_{ji}= -2.$$
\end{enumerate}

Note that $p_a(D_{ij})=0$ unless $D_{ij}\subset V_i$ is an anticanonical cycle of length $1$,
i.e. an irreducible nodal anticanonical divisor $D_{ij}\in |-K_{V_i}|$.

Every natural compactification of the moduli space
of K3 surfaces has strata of Types I, II, III, with Types II,
III on the boundary. The three cases are distinguished by the
property that for Type I, the central fiber is smooth, for Type II,
the central fiber has double curves but no triple points, and for Type
III the central fiber has triple points.

\begin{definition}\label{def:antican-pair}
  An {\it anticanonical pair} $(V,D)$ is a smooth rational surface $V$
  together with a cycle of smooth rational curves $D\in |-K_V|$.
\end{definition}

\begin{definition}
  Let $(V,D)$ be an anticanonical pair, with $D=D_1+\cdots +D_n$. The
  {\it charge} is $Q(V,D):=12- \sum (D_i^2+3).$
\end{definition}

\begin{definition} A {\it corner blow-up} of $(V,D)$ is the blow-up at
  a node of the cycle $D$ and an {\it internal blow-up} is a blow-up
  at a smooth point of $D$. In both cases, the blow-up has an
  anticanonical cycle mapping to $D$. The corner blow-up leaves the
  charge invariant, while the internal blow-up increases the charge by
  $1$.
\end{definition}

For the internal blow-up, the resulting anticanonical cycle is the strict transform
of $D$, whereas for the corner blow-up, it is the reduced inverse image of $D$.

\begin{remark} By \cite[2.7]{friedman2015on-the-geometry}
  the pair $(V,D)$ is {\it toric}, in the sense that $V$
  is toric and $D$ is the toric boundary, if and only if
  $Q(V,D)=0$. Otherwise, $Q(V,D)>0$. 
\end{remark}

We have the following proposition:

\begin{proposition}[Conservation of Charge]
  Let $\mathcal{X}\rightarrow (C,0)$ be a Type III Kulikov
  degeneration. Then
  $\sum_{i=1}^n Q(V_i,\textstyle\sum_j D_{ij})=24.$ In particular,
  at most $24$ components of $\mathcal{X}_0$ are
  non-toric.
\end{proposition}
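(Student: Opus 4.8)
The plan is to compute $\sum_i Q(V_i, \sum_j D_{ij})$ by expanding the definition of charge and reorganizing the sum using the combinatorics of the triangulated sphere $\Gamma(\mathcal X_0)$ together with two global constraints coming from the Kulikov condition: the triple point formula $d_{ij} + d_{ji} = 2$ and the fact that each $\sum_j D_{ij}$ is an anticanonical cycle on $V_i$.

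First I would unwind the definition. Writing $D^{(i)} = \sum_j D_{ij}$ for the anticanonical cycle on $V_i$, and letting $e_i$ be the number of components of $D^{(i)}$ (equivalently the number of edges at the vertex $i$ in $\Gamma(\mathcal X_0)$, equivalently the number of triangles at $i$), we have
\begin{displaymath}
  Q(V_i, D^{(i)}) = 12 - \sum_{j} (D_{ij}^2 + 3) = 12 - 3e_i - \sum_j D_{ij}^2.
\end{displaymath}
Summing over $i$ gives $\sum_i Q(V_i, D^{(i)}) = 12 V - 3\sum_i e_i - \sum_i \sum_j D_{ij}^2$, where $V$ is the number of vertices of $\Gamma(\mathcal X_0)$. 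Since each edge $\{i,j\}$ of $\Gamma(\mathcal X_0)$ is counted once in $e_i$ and once in $e_j$, we get $\sum_i e_i = 2E$, with $E$ the number of edges. For the self-intersection term I would pair up the two contributions $D_{ij}^2$ and $D_{ji}^2$ coming from the same double curve $D_{ij} = V_i \cap V_j$. The definition of $d_{ij}$ together with the Type III triple point formula $d_{ij} + d_{ji} = 2$ yields, for a smooth double curve, $D_{ij}^2 + D_{ji}^2 = -d_{ij} - d_{ji} = -2$, and for a nodal double curve the correction terms $+2$ on each side combine to give $D_{ij}^2 + D_{ji}^2 = -(d_{ij}-2) - (d_{ji}-2) = 2 - 2 = \ldots$; in either case the clean bookkeeping is that $\sum_{i}\sum_j d_{ij} = \sum_{\{i,j\}}(d_{ij}+d_{ji}) = 2E$, so I would instead phrase the charge in terms of $d_{ij}$ from the start, noting $D_{ij}^2 = -d_{ij}$ up to the nodal correction which I will track separately.

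Carrying this through, $\sum_i Q(V_i, D^{(i)}) = 12V - 6E + \sum_i\sum_j d_{ij} - 2(\text{number of nodal double curves, counted with multiplicity as they contribute to the correction})$. Using $\sum_i\sum_j d_{ij} = 2E$ this collapses to $12V - 6E + 2E = 12V - 4E$. Now the dual complex $\Gamma(\mathcal X_0)$ is a triangulation of $S^2$, so every face is a triangle: $2E = 3F$ where $F$ is the number of faces, and Euler's formula $V - E + F = 2$ gives $F = 2 - V + E$, hence $2E = 3(2 - V + E)$, i.e. $E = 3V - 6$ and $F = 2V - 4$. Substituting, $12V - 4E = 12V - 4(3V-6) = 24$. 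The nodal-curve correction terms must be shown to vanish in aggregate; this is where one uses that a nodal double curve $D_{ij}$ on $V_i$ forces, by the Type III description, the matching structure on $V_j$, so such curves are paired and their $+2$ corrections to $D_{ij}^2$ are already absorbed into the definition $d_{ij} = -D_{ij}^2 + 2$ — so in fact no separate correction arises once charge is written consistently via $d_{ij}$, and the computation above is exact.

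The main obstacle I anticipate is the careful bookkeeping around nodal (non-smooth) double curves and the precise relationship between $D_{ij}^2$ as an honest self-intersection on the normalization of $V_i$ versus the combinatorial quantity $d_{ij}$ — one must make sure the triple point formula is being applied to the correct normalization and that each double curve is counted with the right multiplicity in the edge count of $\Gamma(\mathcal X_0)$ (a nodal curve corresponds to an edge whose two endpoints coincide, i.e. a loop, slightly altering the naive Euler characteristic count). Once the convention is fixed so that $\sum_j d_{ij} = e_i$-weighted correctly and $\sum_{\{i,j\}}(d_{ij}+d_{ji}) = 2E$, the rest is the elementary Euler-characteristic computation above. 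The final sentence — that at most $24$ components are non-toric — is then immediate from the remark that $Q(V_i, D^{(i)}) = 0$ iff $(V_i, D^{(i)})$ is toric and $Q \geq 0$ always, so the number of indices $i$ with $Q(V_i, D^{(i)}) \geq 1$ is at most $\sum_i Q(V_i, D^{(i)}) = 24$.
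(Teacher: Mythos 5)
The paper does not give a proof of this proposition; it simply cites Proposition 3.7 of Friedman--Miranda. Your Euler-characteristic computation is essentially the argument in that reference, and its skeleton is correct: writing $V$, $E$, $F$ for the numbers of vertices, edges and faces of the triangulated sphere $\Gamma(\mathcal{X}_0)$, one has $\sum_i e_i = 2E$, $\sum_{i,j} d_{ij} = \sum_{\{i,j\}} (d_{ij}+d_{ji}) = 2E$ by the triple point formula, and $2E = 3F$ together with $V-E+F=2$ gives $12V - 6E + 2E = 12V - 4E = 24$.

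The one place where you should be more careful is the nodal bookkeeping, because the cancellation you assert is not automatic and your intermediate computation has a sign slip. For a nodal double curve $d_{ij} = -D_{ij}^2 + 2$ gives $D_{ij}^2 = 2 - d_{ij}$, hence $D_{ij}^2 + D_{ji}^2 = 4 - (d_{ij}+d_{ji}) = +2$, whereas in the smooth case it is $-2$; your display trails off at ``$2-2=\ldots$''. The upshot is that if one computes charge with the literal formula $Q = 12 - \sum_j(D_{ij}^2+3)$, the total comes out to $24 - 2n$, where $n$ is the number of ordered pairs $(i,j)$ with $D_{ij}$ nodal, so the correction does \emph{not} vanish ``in aggregate.'' The correct resolution is the one you name but do not justify: the charge of a pair whose anticanonical curve has a nodal component must be taken to be the charge of its corner blow-up at the self-node (a cycle of smooth curves), and a short computation shows this amounts to replacing $D_{ij}^2$ by $D_{ij}^2 - 2 = -d_{ij}$. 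Since the paper's definition of charge is stated only for cycles of \emph{smooth} rational curves, this is a genuine extension of the definition, not a reformulation, and it is exactly what makes $-\sum_{i,j} D_{ij}^2$ become $\sum_{i,j} d_{ij} = 2E$ with no leftover term. One further small correction: a nodal $D_{ij}$ with $i\neq j$ is not a loop in the dual complex (its endpoints are distinct); the degenerate simplices occur at the triple point sitting at its node, where two of the three local surface branches belong to the same global component. Your deduction of the final sentence from $Q\geq 0$ with equality iff toric is correct.
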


\begin{proof} See Proposition 3.7 of Friedman-Miranda
  \cite{friedman1983smoothing-cusp}. 
\end{proof}

As we will see in Section \ref{subsec:pseudo}, this proposition presaged the
existence of an integral-affine structure on the dual complex
$\Gamma(\mathcal{X}_0)$ of the central fiber.

The {\it combinatorial type} of a Kulikov degeneration is
the combinatorial information of the simplicial complex $\Gamma(\cX_0)$,
together with the deformation type of each component $(V_i,\sum_j D_{ij})$,
which, in particular, determines (but is not always determined by) the collection
of integers $d_{ij}$.

The remaining data is continuous: One must choose a point in the
deformation space of anticanonical pairs for each component, and
choose how to glue double curves $D_{ij}$. These moduli are
parametrized by a torus $(\C^*)^N$ of some large dimension, but for
$\mathcal{X}_0$ to be smoothable we must choose the gluings and moduli
of $V_i$ carefully. A theorem of Friedman
\cite{friedman1983global-smoothings} states that
{\it
  $d$-semistability}
$$\mathcal{E}xt^1(\Omega^1_{\mathcal{X}_0},\mathcal{O}_{\mathcal{X}_0})=1\in
\textrm{Pic}^0((\mathcal{X}_0)_{\rm sing})\simeq (\C^*)^{\#\{V_i\}-1}
$$
is a necessary and sufficient condition for smoothability.

By \cite{friedman1986type-III}, the logarithm of monodromy in Types II
and III is given by
$$\log T\,:\,x\mapsto (x\cdot \delta)\lambda-(x\cdot \lambda)\delta$$
for elements $\delta,\lambda\in H^2(\mathcal{X}_t,\Z)$ satisfying
$\delta^2=\delta\cdot \lambda =0$, $\delta$ primitive, and
$\lambda^2=\#\{\textrm{triple points of }\mathcal{X}_0\}$. Thus
$\lambda^2=0$ if the degeneration is Type II. 

\begin{definition}\label{def:monodromy} Let $\mathcal{X}\rightarrow C$
  be a Type III degeneration. We call
  $\delta\in H^2(\mathcal{X}_t,\Z)$ the {\it vanishing cycle} and the
  vector $\lambda\in \delta^\perp/\delta$ the {\it monodromy
    invariant}. If the family $\mathcal{X}\rightarrow C$ is
  polarized by $L$, the vanishing cycle and monodromy
  invariant are defined similarly, but with reference to the ambient
  lattice $c_1(L)^\perp\subset H^2(\cX_t,\Z)$. \end{definition}

Any degeneration of K3 surfaces determines a primitive isotropic sublattice of
$H^2(\mathcal{X}_t,\Z)$ by taking a Kulikov model and
setting
\begin{align*}  I&=\Z \delta &&\textrm{if }\mathcal{X}\rightarrow
  C\textrm{ is Type III,} \\ 
  J&=(\Z
  \delta\oplus \Z\lambda)^{\rm sat} &&\textrm{if }\mathcal{X}\rightarrow C\textrm{ is Type
    II.}\end{align*}
    
\subsection{Integral-affine structures: general definitions} 

\begin{definition} An {\it integral-affine structure} on a real surface $S$
  is a collection of charts from $S$ to $\R^2$ such that the
  transition functions lie in $\SL_2(\Z)\ltimes
  \R^2$. \end{definition}
  
\begin{definition} 
  The {\it monodromy representation}
  $\rho\,:\,\pi_1(S,*)\rightarrow \SL_2(\Z)\ltimes \R^2$ is constructed
  by patching together charts along a loop $\gamma\in \pi_1(S,*)$
  in the unique way such that they glue on
  overlaps, then comparing the chart at the end of the loop with the
  one at the beginning. This process of patching charts together
  defines the {\it developing map} from the universal cover
  $\widetilde{S}\rightarrow \R^2$ which is equivariant with respect to
  $\rho$. Usually, we further project the monodromy to the group
  $\SL_2(\Z)$.

\end{definition}

As defined, the two-sphere admits no integral-affine structures. One
must introduce a reasonable class of singularities of such structures.

\begin{figure}[!h]
  \includegraphics[width=345pt]{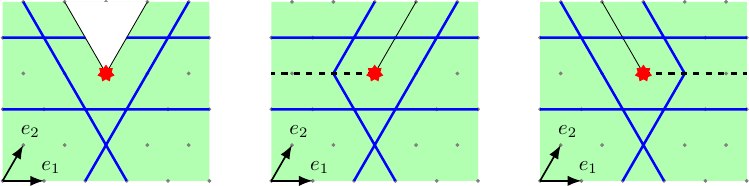} 
  \caption{Three representations of the $I_1$ singularity.}
  \label{fig:i1}
\end{figure}

\begin{definition}
  An {\it $I_1$ singularity} is the germ of a singular integral-affine
  surface isomorphic to the following basic example:

  Cut from $\R^2=\R e_1\oplus \R e_2$ the convex cone with the sides $\bR_{\ge0}e_2$
  and $\bR_{\ge0}(e_2-e_1)$, as on the left in Fig.~\ref{fig:i1}, and
  glue one boundary ray to another by a shear in the $e_1$-direction,
  i.e. by the rule $e_1\mapsto e_1$, $e_2\mapsto -e_1+e_2$.
\end{definition}

  Three straight lines in the affine structure are shown in
  bold blue.
  The second and third figures in Fig.~\ref{fig:i1} also represent the $I_1$
  singularity, with a dashed ray in the monodromy-invariant direction removed. The image
  of the developing map is $\R^2$ minus the ray. We can visualize this presentation
  as taking the standard affine structure on $\R^2$ minus the ray,
  then gluing across the ray by a shear.

\begin{remark}\label{rem:flip-gap}
  The $I_1$ singularity can be presented by removing any ray
  emanating from the singularity. When this ray is not in a
  monodromy-invariant direction, the two sides of the ray separate to
  produce a gap as in the left-hand figure. \end{remark}

\begin{definition}\label{intaffsing}
  Let $\vec{v}_1,\dots,\vec{v}_k$ be a sequence of primitive integral
  vectors, ordered cyclically counterclockwise around the
  origin. Define an {\it integral-affine singularity}
  $(S,p)=I(n_1\vec{v}_1,\dots,n_k\vec{v}_k)$ to be the result of
  shearing the affine structure of $\R^2$ a total of $n_i$ times along
  $\R_{\geq 0}\vec{v}_i$.
\end{definition}

  Let $M(\vec{v})$ be the unique matrix conjugate in
  $\SL_2(\Z)$ to $(e_1,e_2)\mapsto (e_1,e_1+e_2)$ such that $\vec{v} M(\vec{v})=\vec{v}$, 
  i.e. $M(\vec{v})$ is the unit shear along $\vec{v}$. Then, the
  $\SL_2(\Z)$ monodromy of a counterclockwise loop around the singularity $(S,p)$
  is the product $M(S,p)=M(\vec{v}_1)^{n_1} \cdots M(\vec{v}_k)^{n_k}.$

We can view $I(n_1\vec{v}_1,\dots,n_k\vec{v}_k)$ as the collision of $n_1+\cdots+n_k$
$I_1$ singularities, with monodromy invariant directions along the $\vec{v}_i$.

\begin{definition}  The {\it charge} of a singularity $(S,p)$ is the number $\sum_{i=1}^k n_i$
  of rays sheared to produce it, counted with multiplicity. For
  instance, the $I_1$ singularity $I(\vec{v})$ has charge one. \end{definition}

\begin{definition} An integral-affine sphere $B$, or $\ias$ for short,
  is a sphere $B=S^2$ and a finite set $\{p_1,\dots,p_n\}\in B$ such
  that $B\setminus\{p_1,\dots,p_n\}$ has a non-singular
  integral-affine structure, and a neighborhood of each $p_i$ is
  modeled by some integral-affine singularity $I(n_1\vec{v}_1,\dots,
  n_k\vec{v}_k)$.
\end{definition}

\begin{proposition}
  Let $B$ be an integral-affine structure with singularities on a
  compact oriented surface of genus $g$. Then, the sum of the charges
  is $12(2-2g)$.
\end{proposition}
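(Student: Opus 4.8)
The plan is to compute the sum of the charges as a degree of monodromy, using the fact that an integral-affine structure with singularities on $B$ gives a flat $\SL_2(\Z)\ltimes\R^2$-bundle on $B\setminus\{p_1,\dots,p_n\}$, and to relate the local contribution at each singularity to the winding of the monodromy. Concretely, removing small disks around the $p_i$ gives a surface $B^\circ$ of genus $g$ with $n$ boundary circles. The linear part of the monodromy is a representation $\rho\colon\pi_1(B^\circ)\to\SL_2(\Z)$. Since each singularity is of type $I(n_1\vec v_1,\dots,n_k\vec v_k)$, the local monodromy around $p_i$ is a product of $c_i=\sum_j n_j$ conjugates of $L$, where $c_i$ is the charge of $p_i$. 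The key point is that each factor $M(\vec v)$, being conjugate to $L$, is a single positive Dehn-twist-type element, and the whole discussion takes place in (a lift of) $\SL_2(\Z)$; what we want to show is $\sum_i c_i = 12(2-2g)$.

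First I would set up the appropriate ``degree'' invariant. The group $\SL_2(\Z)$ has $\mathrm{H}^2$ (or rather its central extensions, e.g.\ via the universal cover $\widetilde{\SL_2(\R)}$ or via the Rademacher function / Meyer cocycle) detecting exactly this kind of count. The cleanest route: fix the homomorphism $\SL_2(\Z)\to\PSL_2(\Z)\cong \Z/2 * \Z/3$ and use the fact that $L=\begin{pmatrix}1&1\\0&1\end{pmatrix}$ maps to a parabolic; better yet, pass to $\widetilde{\SL_2(\R)}$ and use the translation-number / Rademacher quasimorphism $\Phi$, normalized so that the canonical lift of a conjugate of $L^{\pm1}$ has $\Phi$-value $\pm1$, and so that $\Phi$ descends modulo the relation coming from the central element $Z$ which generates $\pi_1(\SL_2(\R))$ and satisfies $(ST)^6 = Z$ type identities. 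Then the sum $\sum_i c_i$ equals the total ``$\Phi$-defect'' of the boundary monodromies, which by the coboundary relation $\prod_i (\text{boundary monodromies}) \cdot \prod_j [\text{handles}] = Z^{m}$ in $\widetilde{\SL_2(\R)}$ is forced to equal a universal multiple of the Euler characteristic $2-2g$. Computing the normalizing constant once, on the standard example (the sphere with $24$ $I_1$ singularities coming from a Kulikov model, or equivalently from the elliptic fibration with $24$ nodal fibers on a K3, where $g=0$ and $\sum c_i = 24 = 12\cdot 2$), pins down the constant to be $12$.

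An equivalent and perhaps more self-contained argument I would give instead: each $I_1$ singularity is exactly the integral-affine model of a nodal fiber of an elliptic fibration, and a neighborhood of a singularity of charge $c_i$ is the model of $c_i$ colliding nodal fibers (the monodromy is $L^{c_i}$ up to conjugacy). So $B$, away from singularities, is the base of a torus fibration $M\to B$; the total space $M$ is a (topological, or $T^2$-)torus bundle with $\sum c_i$ singular fibers of $I_1$ type, hence $\chi(M) = \sum_i c_i$ by the standard local computation $\chi(I_1\text{ fiber})-\chi(T^2)=1-0=1$ at each node and $\chi$ additivity. On the other hand the SYZ-type total space over a genus-$g$ base with $\sum c_i$ nodes is, after the torus bundle structure, analyzed by the multiplicativity of $\chi$ over the smooth locus plus the singular-fiber defect, and comparing with the known case $g=0$ where $M$ is a K3 surface (so $\chi=24$) gives the general formula $\sum c_i = 12\cdot(2-2g)$; more precisely one shows $\chi(M)$ depends only on $g$, linearly, via $\chi(M)=12(2-2g)$, since $M$ fibers in tori over $B^\circ$ and one can assemble $M$ from the $g=0$, $n=24$ building block by the usual handle-attachment/fiber-sum arithmetic of Euler characteristics.

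The main obstacle is making the normalization and the ``it depends only on $g$'' step rigorous without circularity: one must be careful that the chosen invariant (translation number of boundary monodromy, or Euler characteristic of the associated torus bundle) genuinely only sees the genus and the total charge, and that the single computed example ($24$ singularities on $S^2$, realized either from a Kulikov model as in Section~\ref{sec:kulikov} via Conservation of Charge, or from the standard rational elliptic / K3 elliptic fibration) is correctly normalized. Given the tools already in the paper --- Kulikov models with $24$ non-toric components, the $I_1$ singularity defined as the affine model of a node, and the interpretation of singularities as collisions of $I_1$'s --- I expect the cleanest writeup routes everything through the elliptic-fibration Euler-characteristic count: $\sum c_i = \chi(M)$, $M$ a torus bundle over the genus-$g$ surface with $\sum c_i$ nodal fibers, hence $\chi(M)=12(2-2g)$ by comparison with the $g=0$ K3 case and additivity of $\chi$.
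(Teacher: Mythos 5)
The paper does not actually prove this proposition; it defers to Kontsevich--Soibelman and to Engel--Filip, so your attempt has to be judged on its own. Your first argument identifies the right framework (lift the monodromy to $\widetilde{\SL_2(\R)}$, use the translation number/Rademacher quasimorphism, read off $\sum c_i = 12m$ from the relation $\prod_j[\tilde a_j,\tilde b_j]\prod_i\tilde g_i = Z^m$), but it has a genuine gap at the decisive step: you assert that $m$ ``is forced by the coboundary relation to equal a universal multiple of $2-2g$.'' It is not. The integer $m$ is the relative Euler number of the representation together with the chosen boundary lifts; it is an extra invariant, not a consequence of the group theory. Concretely, the monodromy of a rational elliptic surface with twelve nodal fibers gives a representation of $\pi_1(S^2\setminus\{12\text{ points}\})$ into $\SL_2(\Z)$ sending each puncture loop to a conjugate of $L$ with total product the identity (the classical relation $(AB)^6=\mathrm{id}$ with $A=L$, $B=R^{-1}$), and there $m=1$, so $\sum c_i=12$, not $24$. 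What rules this out for an integral-affine structure --- and what your write-up never uses --- is that the flat $\SL_2(\Z)$-bundle in question is the \emph{tangent bundle of $B$}, trivialized flatly on $B\setminus\{p_i\}$ by the developing map, with the monodromy-invariant direction at each singularity supplying the canonical boundary framing. Gauss--Bonnet for this flat structure is what pins $m=\chi(B)=2-2g$; with that supplied, your computation $\sum c_i = 12m$ closes the argument and your single normalization example becomes unnecessary.

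Your second, ``more self-contained'' argument is circular. For a torus fibration with nodal fibers over a genus-$g$ surface, $\chi(M)=\#\{\text{nodes}\}$ is correct, but the claim that $\chi(M)$ depends only on $g$ and equals $12(2-2g)$ is false for general torus fibrations (again the rational elliptic surface: $g=0$, $\chi=12$), and the reason it holds for fibrations compatible with an integral-affine base --- e.g.\ that a Lagrangian torus fibration of a closed symplectic four-manifold over $S^2$ with only nodal fibers must have exactly $24$ of them, forcing $M$ to be a K3 --- is precisely the content of the proposition (this is the Leung--Symington classification, whose proof runs through the same tangent-bundle/Euler-class computation). So ``comparing with the known case $g=0$ where $M$ is a K3'' assumes what is to be proved. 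I would discard the second argument and repair the first by inserting the identification of the flat bundle with $TB$ and the resulting equality $m=\chi(B)$.
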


\begin{proof} See \cite{kontsevich2006affine-structures} or
  \cite{engel2018moduli-space}.
\end{proof}

\begin{remark}
  The shearing directions $\vec{v}_i$ used to construct each
  singularity form part of the definition of $B$. Thus, two $\ias$ be
  may not be isomorphic even if there is a homeomorphism
  $B_1\rightarrow B_2$ which is an integral-affine isomorphism away
  from the singular sets. We discuss the appropriate equivalence
  relation below.
\end{remark}

\begin{definition}\label{def:Ipqr}
  Let $\vec u, \vec v, \vec w\in\bZ^2$ be three vectors so that
  $(\vec u, \vec v)$ form an oriented basis and $\vec u+ \vec v + \vec w=0$. As a
  further shortcut, we define $I(p) = I(p\vec u)$, called an {\it $I_p$ singularity}.
  Let $I(p,q) = I(p\vec u, q\vec v)$, and
  $I(p,q,r) = I(p\vec u, q\vec v,r \vec w)$. Up to the action of $\SL_2(\Z)$, this
  notation is symmetric under cyclic rotations.  Finally, we set
  $I(p,q,r,s) = I(p\vec u, q\vec v, r(-\vec u), s(-\vec v))$, also symmetric
  up to cyclic rotation.\end{definition}

\subsection{Pseudo-fans and Kulikov models}\label{subsec:pseudo}
In this section we describe how to encode a deformation type of
anticanonical pairs as an integral-affine surface singularity, and in
turn how to encode a Type III Kulikov model as an $\ias$. 

\begin{definition}\label{def:pseudofan}
  The {\it pseudo-fan} of an anticanonical pair $\mathfrak{F}(V,D)$,
  see \cite[Sec.1.2]{gross2015mirror-symmetry-for-log} or \cite[Def.3.8]{engel2018looijenga}, is a triangulated
  integral-affine surface with boundary constructed as follows:
  
  As a
  PL surface, $\mathfrak{F}(V,D)$ is the cone over the dual complex of
  $D$. The affine structure on each triangle in this cone is declared
  integral-affine equivalent to a lattice triangle of lattice
  volume~$1$. Two adjacent triangles are glued by the following rule:
  Let $\vec{e}_j$ be the directed edge of $\mathfrak{F}(V,D)$
  emanating from the cone point and pointing towards the vertex
  corresponding to $D_j$. In a chart containing the union of the two
  adjacent triangles containing $\vec{e}_j$ we have
  $\vec{e}_{j-1}+\vec{e}_{j+1}=d_j\vec{e}_j$, where $d_j=-D_j^2$ if
  $D_j$ is smooth and $d_j=-D_j^2+2$ if $D_j$ is a rational nodal
  curve.
\end{definition}

\begin{remark}
  When $(V,D)$ is a toric pair, the pseudo-fan $\mathfrak{F}(V,D)$ is
  a non-singular integral-affine surface with a single chart to a
  polygon in $\R^2$. The vertices of this polygon are the endpoints of
  the primitive integral vectors pointing along the $1$-dimensional
  rays of the fan of $(V,D)$.
\end{remark}

\begin{remark}\label{allpseudo}
  A {\it toric model}
  $\pi:(V,D) \rightarrow (\overline{V},\overline{D})$ is a blow-down
  to a toric pair. After some corner blow-ups, every anticanonical
  pair admits a toric model, see
  \cite[Prop.~1.3]{gross2015moduli-of-surfaces}. Assume that $\pi$
  consists only of internal blow-ups, as corner blow-ups don't affect
  toricity. Then \cite[Prop.~3.13]{engel2018looijenga} implies
  $\mathfrak{F}(V,D)$ is the result of shearing along the rays of the
  fan of $(\overline{V},\overline{D})$ corresponding to components
  which get blown up. Hence, by Definition \ref{intaffsing}, every
  integral-affine surface singularity is the cone point of the
  pseudo-fan of some anticanonical pair, and by subdividing the
  singularity into standard affine cones, the converse is also true.
  
  Furthermore, the charge $Q(V,D)$ coincides with the charge of
  the corresponding singularity $\mathfrak{F}(V,D)$. It is the
  number of internal blow-ups of the toric model.
\end{remark}

Let $\mathcal{X}\rightarrow C$ be a Type III degeneration. We label
the vertices of the dual complex $\Gamma(\mathcal{X}_0)$ by $v_i$, the edges by
$e_{ij}$, and the triangles by $t_{ijk}$, corresponding respectively
to the components, double curves, and triple points of
$\mathcal{X}_0$. Let $\Star(v_i)$ be the union of the triangles
containing~$v_i$.

\begin{proposition}\label{spheretok3}
  The dual complex $\Gamma(\mathcal{X}_0)$ of a Type III degeneration
  of K3 surfaces admits a natural integral-affine structure such
  that $$\Star(v_i)=\mathfrak{F}(V_i,\,\textstyle \sum_j D_{ij}).$$
  Conversely, given an integral-affine structure $B$ on the two-sphere
  with a triangulation into lattice triangles of lattice volume
  $1$ and singularities at the vertices, there is a Type III degeneration
  $\mathcal{X}\rightarrow C$ such that
  $\Gamma(\mathcal{X}_0)=B$.
\end{proposition}

Here {\it lattice volume} means twice the Euclidean area.

\begin{proof}
  See \cite{engel2018looijenga} or \cite[Rem1.11v1]{gross2015mirror-symmetry-for-log}.
  The key point is that the pseudo-fans of the components compatibly glue
  to form a well-defined integral affine structure on any quadrilateral formed from
  two adjacent triangles of $\Gamma(\mathcal{X}_0)$. 
  This follows from the formula $d_{ij}+d_{ji}=2$ in (III), below
  Definition \ref{Kulikov-def}.
  \end{proof}

\begin{definition}\label{cornereq}
  Two anticanonical pairs $(V_1,D_1)$ and $(V_2,D_2)$ lie in the same
  {\it corner blow-up equivalence class} (c.b.e.c.) if they are related by
  a sequence corner blow-ups and blow-downs, 
  and a topologically trivial deformation.
  A {\it toric model} of a c.b.e.c. is a representative $(V,D)$ of the equivalence
  class, and a toric model
  $$(V,D)\rightarrow
  (\overline{V},\overline{D}).$$
\end{definition}

Note that all topologically trivial deformations of $(V,D)$ are the result
of deforming the points on $\overline{D}$ which are blown up.

By Remark \ref{allpseudo}, a toric model of an anticanonical pair
$(V,D)$ determines an integral-affine singularity at the cone point of
$\mathfrak{F}(V,D)$. Corner blow-ups subdivide the pseudo-fan,
which do not affect the singularity. Neither do topologically trivial deformations.
We conclude that there is a bijection between presentations
$I(n_1\vec{v}_1,\dots,n_k\vec{v}_k)$ of integral-affine
singularities by shears and toric models of c.b.e.c.s.
We now forget the dependence on the toric model:

\begin{definition}\label{cluster} Two integral-affine singularities
  are {\it equivalent}
  $$(S_1,p_1)=I(n_1\vec{v}_1,\dots,n_k\vec{v}_k)\sim
  I(m_1\vec{w}_1,\dots,m_\ell\vec{w}_\ell)=(S_2,p_2)$$ if 
  the corresponding c.b.e.c.s are equal $[(V_1,D_1)]=[(V_2,D_2)]$.
\end{definition} 

By choosing a single anticanonical pair $(V,D)$ which admits both toric
models corresponding to $\vec{v}_i$ and to $\vec{w}_j$, and building
$\mathfrak{F}(V,D)$ by the recipe in Definition \ref{def:pseudofan}
(which does not use a toric model), an equivalence of integral-affine
singularities provides a homeomorphism
$(S_1,p_1)\rightarrow (S_2,p_2)$ which is an integral-affine
isomorphism away from the $p_i$. But the converse is false, see
Example 4.13 of \cite{engel2021smoothings}. Such examples
explain why it does not suffice to define an integral-affine
singularity as purely a geometric structure---the presentation via
shears (at least up to equivalence) is part of the definition.

\begin{remark}\label{cluster-mutation} Each toric model of the c.b.e.c. of $(V,D)$ defines a
  Zariski open subset of the open Calabi-Yau
  $(\C^*)^2\hookrightarrow V\backslash D.$ One may choose a
  different toric model by changing exactly one exceptional curve $E$
  blown down in the toric model---to a curve $F$ such that $E+F$ is
  the fiber of a toric ruling. The change-of-coordinates to the new
  inclusion $(\C^*)^2\hookrightarrow V\backslash D$ is a birational
  map called a {\it cluster mutation}. 
  It is almost always the case that there are infinitely many such cluster charts.
 Any two toric models of a c.b.e.c. are connected by a series of cluster mutations,
 by a theorem of Blanc \cite{blanc2013symplectic}.
\end{remark}

\begin{example}\label{exa:equiv-ia-sings}
  Start with the toric pair $(\bP^2, L_1+L_2+L_3)$ and make a
  corner blowup to get $(\bF_1, s_0+f+s_\infty+f)$, with $s_0^2=1$,
  $s_\infty^2=-1$. Blow up one point on $s_0$ then contract one exceptional
  curve intersecting $s_\infty$ to obtain $\bP^1\times\bP^1$.
  This corresponds to a single cluster mutation as in Remark \ref{cluster-mutation}.
  We may
  also blow up $p$ points on the first copy of $f$, $q$ points on
  the second copy of $f$, and one more point on $s_0$.
  In this way, we see the equivalences
  \begin{displaymath}
    I(2,p,q) \sim I(1,p,1,q) \sim I(p,1,q,1) \sim I(2,q,p).
  \end{displaymath}
\end{example}

\subsection{Birational modifications and base change}\label{sec:bir-mod}

All Kulikov models $\cX\to (C,0)$ completing a punctured
family $\cX^*\to C^*$ are related by flops along smooth rational curves.
The modifications which change the isomorphism type of $\cX_0$ are:

\begin{enumerate}
\item {\it M1 modifications} are Atiyah flops along an exceptional curve $E\subset V_i$
meeting a double curve $D_{ij}$ at a single point $p$. The effect on $\cX_0$ is to blow down $E$
on $V_i$ and blow up $V_j$ at $p$. \vspace{2pt}
\item {\it M2 modifications} are Atiyah flops along an exceptional double curve $E=D_{ij}=V_i\cap V_j$.
The effect on $\cX_0$ is to blow down $E$ on both $V_i$ and $V_j$, blow up the two
triple points $T_{ijk}$ and $T_{ij\ell}$ contained in $E$, on the components $V_k$ and $V_l$,
and then glue the resulting exceptional curves.
\end{enumerate}

\begin{definition}\label{nodal-slide} Let $(S,p)=I(n_1\vec{v}_1,\dots,n_k\vec{v}_k)$ be an integral-affine singularity.
A {\it nodal slide along $\vec{v_i}$ of length $t$}, cf. \cite[Def.~6.1]{symington2003four-dimensions},
is a surgery on the integral affine structure $(S,p)$
which translates by $t\vec{v}_i$ the originating point of one shearing ray in the direction $\vec{v}_i$.
\end{definition}

  Note that nodal slides are called {\it moving worms} in the mirror
symmetry literature, see e.g. \cite{kontsevich2006affine-structures} or
\cite{gross2015mirror-symmetry-for-log}. 

Starting with the single singularity $(S,p)$, the nodal slide results in an integral-affine
surface with two singularities $I(n_1\vec{v}_1,\dots,(n_i-1)\vec{v}_i,\dots, n_k\vec{v}_k)$
and an $I_1$ singularity at the endpoint of $t\vec{v}_i$. The result
is an integral-affine surface which is isomorphic to the original one
on the complement of the segment $t\vec{v}_i$. Thus, the operation is purely
local and can be done independently of the rest of the integral affine surface.
For appropriately large $t$, a nodal slide may result in the $I_1$
singularity sent off colliding into another singularity.

In fact, any integral-affine singularity can be defined as the result
of colliding a collection of $I_1$ singularities moving along nodal slides. 

\begin{proposition}\label{nodal-slide-dual}
\cite[Prop.~4.5, 4.6]{engel2021smoothings} An M2 modification does not change 
the $\ias$ structure on $B=\Gamma(\cX_0)$, but retriangulates $B$ by cutting along the opposite
diagonal of an integral-affine unit square.

An M1 modification preserves the triangulation of $\Gamma(\cX_0)$ but changes 
the $\ias$ $B$ by a unit length nodal slide,
moving an $I_1$ singularity along $\vec{e}_{ij}$ from $v_i$ to $v_j$.
\end{proposition}

Thus, a sequence of M1 and M2 modifications connecting two Kulikov
surfaces $\cX_0\dashrightarrow \cX_0'$
are modeled as a sequence
of retriangulations and integer length nodal slides
$\Gamma(\cX_0)\dashrightarrow \Gamma(\cX_0')$ 
on the corresponding dual complexes.

\begin{proposition} \cite{Friedman1983base-change}\label{refine-base-change} Let $\cX\to (C,0)$
be a Kulikov model, and consider the base change $\cX'\to (C',0)$ ramified
over $0$ to order $N$. There is a standard resolution $\cX[N]\to \cX'$, 
producing a new Kulikov model whose central fiber $\mathcal{X}_0[N]$
is the result of inserting ``special bands of hexagons" of width $N$ between
all the components of $\cX_0$. The effect on the dual complex $\Gamma(\cX_0)$
is to take the standard refinement every triangle into $N^2$ triangles
(see also \ref{cla:kulikov-avoid-strata} below). \end{proposition}

In fact, the integral-affine structure on the dual complex $B[N]:=\Gamma(\mathcal{X}_0(N))$
is the result of post-composing the integral-affine charts $U\to \R^2$ on $B=\Gamma(\cX_0)$
with multiplication by $N$, cf. \cite[Prop.~4.3]{engel2021smoothings}. We call this the {\it order $N$
refinement} of $B$. Note that the base change
multiplies the monodromy invariant $\lambda\mapsto N\lambda$.

\subsection{Integral-affine divisors}

In this section, we define an integral-affine divisor on an
$\ias$. For motivation, consider a line bundle
$\mathcal{L}\rightarrow \mathcal{X}$ on a Kulikov model.  Let
$L_i:=\mathcal{L}\big{|}_{V_i}\in \textrm{Pic}(V_i)$. These line
bundles automatically satisfy a compatibility
condition
$L_i\cdot D_{ij}= L_j\cdot D_{ji}.$
Thus, we define:

\begin{definition}\label{IAdivisor}
  Let $B$ be an $\ias$. An {\it integral-affine divisor} $R_\ia$ on
  $B$ consists of two pieces of data:
  \begin{enumerate}
  \item A weighted graph $R_\ia\subset B$ with vertices $v_i$,
    straight line segments as edges $e_{ij}$, and integer labels
    $n_{ij}$ on each edge. 

  \item Let $v_i\in R_\ia$ be a vertex and $(V_i ,D_i)$ be an
    anticanonical pair such that $\mathfrak{F}(V_i ,D_i)$ models $v_i$
    and contains all edges of $e_{ij}$ coming into $v_i$. We require
    the data of a line bundle $L_i \in \textrm{Pic}(V_i)$ such that
    $\deg{L_i}\cdot D_{ij}=n_{ij}$ for the components $D_{ij}$ of
    $D_i$ corresponding to edges $e_{ij}$ and $L_i$ has degree zero on
    all other components of $D_i$.
  \end{enumerate}
\end{definition} 

\begin{definition}
  Given a line bundle $\mathcal{L}\rightarrow \mathcal{X}$ on a
  Kulikov degeneration, the intersection numbers $n_{ij}=L_i\cdot D_{ij}$
  define an integral-affine divisor
  $R_\ia\subset B=\Gamma(\mathcal{X}_0)$ supported on the $1$-skeleton.
  If $\cL$ is nef then $R_\ia$ is effective i.e. $n_{ij}\geq 0$.
\end{definition}

\begin{remark}\label{balancing1}
  When $v_i\in R_\ia$ is non-singular, the pair $(V_i,D_i)$ is toric,
  and the labels $n_{ij}$ uniquely determine $L_i$. They must satisfy
  a balancing condition. If $v_{ij}$ are the primitive integral
  vectors in the directions $e_{ij}$ then one must have
  $\sum n_{ij}v_{ij}=0$ for such a line bundle $L_i\to V_i$ to exist.

  Similarly, if $I_1=\mathfrak{F}(V_i,D_i)=I(\vec{v})$
  i.e. $(V_i,D_i)$ is the result of a single internal blow-up of a
  toric pair, the $n_{ij}$ determine a unique line bundle $L_i$ so
  long as $\sum n_{ij}v_{ij} \in \Z\vec{v}$. This condition is
  well-defined as the $v_{ij}$ are well-defined up to shears in the
  $\vec{v}$ direction.
\end{remark}

\begin{definition}\label{def:polarizing-ia-div}
  We say that a divisor on $B$ is {\it polarizing} if each line bundle
  $L_i$ is nef and at least one $L_i$ is big. The {\it
    self-intersection} of an integral-affine divisor is
  $R_\ia^2:=\sum_i L_i^2\in \Z.$
\end{definition}

\begin{definition}
  An $\ias$ is {\it generic} if it has $24$ distinct $I_1$
  singularities.
\end{definition}

\begin{remark}\label{generic} Let $B$ be a lattice triangulated $\ias$ or equivalently,
$B=\Gamma(\mathcal{X}_0)$ is the dual complex of a Type III
degeneration. Then $B$ is generic if and only if
$Q(V_i,D_i)\in\{0,1\}$ for all components $V_i\subset
\mathcal{X}_0$. When $B$ is generic, an integral-affine divisor
$R_\ia\subset B$ is uniquely specified by a weighted graph satisfying
the balancing conditions of Remark \ref{balancing1}, so the extra
data (2) of Definition \ref{IAdivisor} is unnecessary. \end{remark}

\begin{definition}
  An integral-affine divisor $R_\ia\subset B$ is {\it compatible} with
  a triangulation if every edge of $R_\ia$ is formed from edges of the
  triangulation.
\end{definition}

If $B$ comes with a triangulation, we require the integral-affine
divisor to be compatible with it.

\subsection{Integral-affine structures from Lagrangian torus fibrations}
The reference for this section is Symington
\cite{symington2003four-dimensions}.  Let $(S,\omega)$ be a smooth
symplectic $4$-manifold. Given a Lagrangian torus fibration
$\mu\,:\,(S,\omega)\rightarrow B$
with only nodal singularities, the base $B$ inherits a natural
integral-affine structure with an $I_n$ singularity under a necklace
of $n$ two-spheres:

\begin{definition} Let $C_\alpha$ and $C_\beta$ be cylinders in $S$
fibering over a path from a fixed base point $*\in B$ to a point
$p\in B$, such that the ends of the cylinders over $*$ are
homologous to $\alpha$ and $\beta$, an oriented basis of
$H_1(S_*,\Z)$. The {\it induced integral affine structure} on $B$ 
is the collection of charts of the form
$$p\mapsto (x(p),y(p))=\left( \textstyle
  \int_{C_\alpha} \omega, \,\int_{C_\beta} \omega\right)\in \R^2.$$
  \end{definition}
  
These charts are only defined up to monodromy in
$\SL(2,\bZ)\ltimes \R^2$, by choosing a path in a different homotopy class
and moving the base point $*$.

Let $\oT$ be a complex toric surface, $\oL\in\Pic(\oT)\otimes\bR$ an
ample class, and $\oomega$ a symplectic form with $[\oomega]=\oL$. The
moment map
$\mu_\oT\,:\, (\oT,\ov{\omega}) \to \oP$ is a Lagrangian torus
fibration which induces the integral-affine structure on the moment
polytope~$\oP$ coming from its embedding into $\R^2$.
It degenerates over the toric boundary $\oD\subset \oT$,
and sends the components of $\oD$ to the boundary components of $\oP$.

Now let $\phi\colon T\to\oT$ be a blowup at a smooth point of the
boundary $\oD$, with exceptional divisor $E$. Symington
\cite{symington2003four-dimensions} constructed a Lagrangian torus
fibration $\mu_T\,:\, (T,\omega)\to P$ satisfying
$[\omega]=\phi^*[\oomega]-aE$ over a singular integral-affine disk
 $P$ (a ``Symington polytope'') obtained as follows:

\begin{definition} A {\it Symington surgery} is the result of cutting a triangle of lattice
size $a$ (and lattice volume $a^2$) from the side of the moment polytope $\oP$ corresponding to
the component blown up, then gluing the two remaining edges,
introducing an $I_1$ singularity $p\in P$ at the interior corner of
the triangle. \end{definition}

The fiber over $p$ is an irreducible nodal $I_1$ fiber of
the torus fibration. In symplectic geometry, this procedure is called
an \emph{almost toric blowup.} The monodromy axis of the singularity
is parallel to the side of $\oP$ on which the surgery triangle rests and the location of
the cut on the side of $\oP$ is essentially arbitrary.

\begin{construction}\label{visibleconstruction}
  Let $B$ be a generic $\ias$ and let $B^o=B\setminus\{p_1,\dots,p_{24}\}$
  be its nonsingular locus. Let $\gamma =\sum_i (\gamma_i, \alpha_i) \subset B$ be a $1$-chain with
  values in the constructible sheaf
  \begin{math}
    T_\Z:= i_*(T_\Z B^o),  
  \end{math}
  where $i\,:\,B^o\hookrightarrow B$ is the
  inclusion. This sheaf is a $\Z$-local system of rank $2$ on
  $B^o$ and has rank $1$ at the $I_1$ singularities.
  
  Concretely, $\gamma$ is a collection of oriented paths
  $\gamma_i\rightarrow B$ and a (constant) integral vector field
  $\alpha_i$ on each path. There is a boundary map $\partial$ to
  $0$-chains with values in $T_\Z$ gotten by taking an oriented sum
  of the tangent vectors $\alpha_i$ at the endpoints of $\gamma_i$. We
  say that $\gamma$ is a {\it $1$-cycle} if $\partial\gamma=0$. Some care
  must be taken at the singularities, where the rank of $T_\Z$
  drops. Here the condition that the boundary is zero means that
  $\sum \alpha_i$ is parallel to the monodromy-invariant direction of
  the singularity.

From such a $1$-cycle $\gamma$, we may construct a PL surface
$\Sigma_\gamma\subset S$ inside the symplectic $4$-manifold with a
Lagrangian torus fibration $\mu\,:\,(S,\omega)\rightarrow B$. We take
a cylinder in $S$ which maps to $\gamma_i$ whose fibers are the
circles in the torus fiber that correspond to $\alpha_i$ via the
symplectic form. The condition that $\partial \gamma=0$ is exactly the
condition that the ends of these cylinders over the points in
$\cup_i\,\partial \gamma_i$ are null-homologous in the fiber. Thus, we
may glue in a (Lagrangian) $2$-chain contained in the fiber over
$\cup_i\,\partial \gamma_i$ and produce a closed PL surface
$\Sigma_\gamma$.
\end{construction}

\begin{definition}
  The surfaces $\Sigma_\gamma$ constructed as above are the {\it
    visible surfaces}.
\end{definition}

\begin{example}\label{minus-two-visible}
Given a path $\gamma$ connecting two $I_1$ singularities $p$ and $q$,
such that the monodromy-invariant directions at both $p$ and $q$ are parallel to $\alpha$,
the $1$-cycle $(\gamma,\alpha)$ defines a visible surface, which we denote
$E_{(\gamma,\alpha)}$. It satisfies $E_{(\gamma,\alpha)}^2=-2$ because
$E_{(\gamma,\alpha)}$ is attached to each nodal fiber $S_p$, $S_q$ by 
a $(-1)$-framed $2$-handle. \end{example}

Note that $\Sigma_\gamma$ is non-canonical even on the level of its
homology class: There are many choices of Lagrangian $2$-chains in the
fibers over $\cup_i\,\partial \gamma_i$. But, they all differ by some
multiple of the fiber class $f=[\mu^{-1}(p)]$. Note that also
$[\Sigma_\gamma]\cdot f=0$.  We do have a well-defined class
$[\Sigma_\gamma]\in f^\perp/f$.

We note an important special case of the above construction.

\begin{definition} Suppose
that all $\gamma_i$'s are straight line segments $e_{ij}$ forming a
graph in the integral-affine structure on $B$, and that the
tangent vector field is an integer multiple $n_{ij}$ of the primitive
integral tangent vector along $\gamma_i$. Then the cylinder lying over
$e_{ij}$ can be made Lagrangian and the
surface $\Sigma_\gamma$ is a PL Lagrangian surface in $(S,\omega)$.
We call the result a {\it Lagrangian visible surface}.
\end{definition}

 In particular, the class of a Lagrangian visible surface
 satisfies $[\Sigma_\gamma]\cdot [\omega]=0$.
Observe that the condition that $\gamma$ is a
$1$-cycle is exactly the balancing condition of Remark
\ref{balancing1}. Thus an integral-affine divisor $R$ on $B$ in the sense
of Definition \ref{IAdivisor} corresponds to a Lagrangian visible
surface $\Sigma_R$.

\subsection{The Monodromy Theorem}\label{sec:monodromy}

Our goal now is to understand the vanishing cycle $\delta$, monodromy
invariant $\lambda$, and polarization of a Kulikov degeneration
$\mathcal{X}\rightarrow C$, see Definition \ref{def:monodromy}, in terms of
$\ias$ and symplectic geometry. We now prove a version of Proposition
3.14 of \cite{engel2021smoothings}, the key new ingredient being the
presence of a polarizing divisor $R$.

\begin{theorem}\label{thm:monodromy}
  Let $B$ be a generic $\ias$, together with a triangulation into
  lattice triangles of lattice volume~$1$. 
  \begin{enumerate}
  \item Let $\cX\to C$ be a type III Kulikov degeneration such that
    $\Gamma(\mathcal{X}_0)=B$.
  \item Let $\mu\colon(S,\omega)\to B$ be a Lagrangian torus fibration
    over the same $B$.
  \end{enumerate}
  Then there exists a diffeomorphism $\phi\colon S\to \cX_t$ to a
  nearby fiber $t\ne0$ such that
  \begin{enumerate}\renewcommand{\theenumi}{\alph{enumi}}
  \item $\phi_*f = \delta$, 
  \item $\phi_*[\omega] = \lambda$ in $\delta^\perp/\delta\otimes \R$.\end{enumerate}
  Moreover, suppose that $\mathcal{L}\rightarrow \mathcal{X}$ is a
  line bundle, which defines the integral-affine divisor $R_{\ia}$ on
  $B$. Let $\Sigma_{R_\ia}$ be the corresponding Lagrangian visible surface
  in $S$. Then, we have
  \begin{enumerate}
  \item[(c)] $\phi_*[\Sigma_{R_\ia}]=c_1(\mathcal{L}_t)$ in $\delta^\perp/\delta$.
  \end{enumerate}
  \end{theorem}

\begin{proof} We first prove (a) and (b) following Proposition 3.14 of
\cite{engel2021smoothings} closely. There, an almost exactly analogous
statement is proved for Type III degenerations of anticanonical pairs,
so we only describe the minor modifications necessary. We ignore the parts
of the proof in \cite{engel2021smoothings} which refer to $D$, and
similarly the special component of $\mathcal{X}_0$ equal to the
hyperbolic Inoue surface, instead treating all surfaces $V_i\subset
\mathcal{X}_0$ on equal footing. Then, the construction of $\phi$
proceeds the same way, by using the Clemens collapse to show that
$(S,\omega)$ and $\mathcal{X}_t$ can be written as the same fiber
connect-sum of $2$-torus fibrations over the intersection complex
$\Gamma(\mathcal{X}_0)^\vee$. Statement (a) follows immediately.

Again following \cite{engel2021smoothings}, we consider the collection
of Lagrangian visible surfaces $\Sigma_\gamma$ which fiber over the
$1$-skeleton $\Gamma(\mathcal{X}_0)^{[1]}$. The images under $\phi_*$
of the classes $[\Sigma_\gamma]$ generate a $19$-dimensional lattice
in $\delta^\perp/\delta$ invariant under the Picard-Lefschetz
transformation $H^2(\mathcal{X}_t;\Z)\rightarrow
H^2(\mathcal{X}_t,\Z)$. Since $[\omega]\cdot [\Sigma_\gamma]=0$, we
conclude that the monodromy invariant $\lambda$ and $\phi_*[\omega]$
are proportional in $\delta^\perp/\delta$. By
\cite{friedman1986type-III}, $$\lambda^2=\#\{\textrm{triple points of
}\mathcal{X}_0\} = \vol(\Gamma(\mathcal{X}_0))=[\omega]^2.$$ We
conclude that $\lambda= \phi_*[\omega]\textrm{ mod }\Z\delta$,
i.e. (b).

Now suppose that $\mathcal{X}$ admits a line bundle
$\mathcal{L}$. There is an integral-affine divisor $R$ on
$\Gamma(\mathcal{X}_0)$ whose defining line bundles
$L_i\in \textrm{Pic}(V_i)$ are $\mathcal{L}\big{|}_{V_i}$. Since
$\Gamma(\mathcal{X}_0)$ is generic, these line bundles are uniquely
determined by the integer weights $n_{ij}= L_i\cdot D_{ij}$ on the edges of
$\Gamma(\mathcal{X}_0)^{[1]}$. By construction, the Lagrangian visible
surface $\Sigma_{R_\ia}\subset S$ fibering over the weighted balanced graph
$R_\ia$ is sent by $\phi$ to a surface whose Clemens collapse is a union
of surfaces $\Sigma_i\subset V_i$ satisfying
\begin{enumerate} 
\item $\Sigma_i\cap D_{ij} = \Sigma_j\cap D_{ji}$
\item $\Sigma_i\cdot D_{ij} =L_i\cdot D_{ij}$.
\end{enumerate} These conditions uniquely determine the class $\phi_*[\Sigma_{R_\ia}]$. We conclude (c).  \end{proof}

\begin{remark} 
  Statements similar to Theorem \ref{thm:monodromy} (a) and (b) have
  appeared in the mirror symmetry literature. For instance, Theorem
  5.1 of \cite{gross2010mirror-symmetry} computes the monodromy of the
  Picard-Lefschetz transform of a toric degeneration of Calabi-Yau
  varieties in terms of cup product with the {\it radiance
    obstruction} $$c_B\in H^1(B,i_*(T_\Z B^o)),$$ a
  cohomology class canonically associated to an integral-affine
  structure, first studied in \cite{goldman1984radiance-obstruction}. The class $c_B$
  is identified with $[\omega]$ via the Leray spectral sequence of the
  map $\mu\,:\,(S,\omega)\rightarrow B$.  These monodromy formulas
  verify the prediction of topological SYZ mirror symmetry that the
  Picard-Lefschetz transformation is cup product with a section of the
  SYZ fibration.
  See also \cite[Cor. 4.24]{odaka2021collapsing-k3}.
\end{remark}

\section{Compactifications of $F_2$}
\label{sec:compactifications}

We first recall the basics about the moduli spaces of K3 surfaces as
they apply to the degree 2 case. For the Baily-Borel and toroidal
compactifications, a convenient reference is
\cite{scattone1987on-the-compactification-of-moduli}.  Then we
describe a compactification via stable pairs and prove some auxiliary
results about it.

\subsection{Period domain and moduli space}
\label{subsec:periods}

Let $\Lambda_{\rm K3} \simeq H^3\oplus E_8^2$ be a fixed lattice of
signature $(3,19)$ isomorphic to $H^2(S,\bZ)$ for a K3 surface $S$. Here,
$H$ is the hyperbolic plane, and the lattice $E_8$ for convenience is
negative definite. All primitive vectors of square $h^2=2d$ lie in the
same orbit of the isometry group of $\Lambda_{\rm K3}$. The lattice
$h^\perp$ is isometric to $H^2\oplus E_8^2\oplus \la -2d\ra$.  The
period domain for the polarized K3 surfaces of degree $2d$ is a connected
component of
\begin{displaymath}
  \bD = \mathbb{D}_{2d}:= \mathbb{P}
  \big\{ x\in h^\perp \otimes \C\,\big{|}\,
  x\cdot x =0,\,x \cdot \overline{x}>0\big\},
\end{displaymath}
a Hermitian symmetric domain associated to the group
$O^+(2,19)$. On it, we have the action of the group $\Gamma =
\Gamma_{2d}$ which is the spinor norm $1$ subgroup of the
stabilizer of $h$ in the isometry group
$O(\Lambda_{\rm K3})$. By the Torelli theorem, the quotient space $F_{2d} =
\Gamma\backslash\bD$ is the coarse moduli space of polarized K3
surfaces $(X,L)$, where $X$ is a K3 surface with ADE (Du Val) singularities,
and $L$ is an ample line bundle with $L^2=2d$. 
One has $\dim F_{2d} = \dim \bD_{2d} = 19$.

The moduli stack $\cF_{2d}$ of polarized K3 surfaces of degree $2d$
is a smooth DM stack. This stack and its coarse moduli space $F_{2d}$ are incomplete,
and $F_{2d}$ is quasiprojective.

\subsection{Baily-Borel compactification}
\label{sec:baily-borel}

Let $\mathbb{D}^\vee$ denote the compact dual of $\mathbb{D}$---it is
the quadric defined by dropping the condition $x\cdot \overline{x}>0$. Let
$\overline{\mathbb{D}}\subset \mathbb{D}^\vee$ be the topological closure.
Let $I$ be a primitive isotropic sub-lattice of $h^\perp$. Then $I$
has rank one or two. One calls the former Type III and the latter
Type II. The {\it boundary component} associated to $I$ is by
definition
$$F_I:= \mathbb{P}\{x\in \overline{\bD} \,\big{|}\,\textrm{span}\{\textrm{Re}(x),\,\textrm{Im}(x)\}=
I\otimes \R\}\subset \mathbb{D}^\vee$$ which is either a $0$-cusp, a
point for Type III or a $1$-cusp, a copy of
$\mathbb{H}$ for Type II ($\bH$ is the
upper-half plane).

\begin{notation} To distinguish the ranks, we henceforth use $I$ or $J$ for rank $1$ or $2$
primitive isotropic lattices, respectively. \end{notation}

Then, the Baily-Borel compactification is, topologically,
$$\overline{F}_{2d}\ubb:=\Gamma \backslash (\mathbb{D}\cup_J F_J\cup_I F_I).$$
In $\oF_2\ubb$, the boundary consists of four curves, meeting
at a single point, see \cite{scattone1987on-the-compactification-of-moduli}.
The point is the Type III boundary while the
curves (minus the point) are the Type II boundary. The curves
correspond to four distinct orbits of rank $2$ primitive isotropic
sublattices $J\subset h^\perp$.
For each of them, $J^\perp/J$ contains a finite index root sublattice,
which can be used as a label for this 1-cusp:
$$A_{17},\,\, D_{10}\oplus E_7, \,\,E_8^2\oplus A_1
,\,\textrm{ and } D_{16}\oplus A_1.$$

The stabilizer $\textrm{Stab}_\Gamma(J)\subset \Gamma$ acts on $J \simeq \Z^2$
by a finite index subgroup $\Gamma_J\subset \SL_2(\Z)$ and the boundary
component $ \Gamma_J\backslash F_J$ is a modular curve corresponding
to a Type II boundary curve. One has a natural finite morphism
$\Gamma_J\backslash F_J\to \SL(2,\bZ)\backslash \bH=\bA^1_j$ to the
$j$-line. Thus, the boundary of the Baily-Borel compactification has
codimension $18$.

\subsection{Toroidal compactifications}
\label{sec:toroidal-comp}

Toroidal compactifications 
$F_{2d}\hookrightarrow \overline{F}_{2d}^{\mathfrak{F}}$ have
divisorial boundary, but depend on a {\it $\Gamma$-admissible collection of fans}.
This is a choice of a fan $\fF=\{\fF_I\}$ for each cusp of the Baily-Borel
compactification, satisfying conditions described below.
For the 1-cusps, the fans are 1-dimensional and no choice
is involved; they are automatically compatible with the fans for the
0-cusps.

Each 0-cusp corresponds to a primitive isotropic line
$I\subset h^\perp$.  Consider the lattice
$\Lambda_I:=I^\perp/I$ whose intersection form has
signature $(1,18)$. Let
$\Gamma_I:=\textrm{Stab}_\Gamma(I)/U_I$ where
$U_I\subset \textrm{Stab}_\Gamma(I)$ is the unipotent
subgroup, isomorphic to a translation subgroup of $I^\perp/I$.
Let $\cC_I$ denote the
positive cone of $\Lambda_I\otimes \R$ and let
${\ov\cC}_I$ denote its {\it rational closure}---the union of
the positive cone and the rational null rays on its boundary.
Then the fan $\fF_I=\{\tau_i\}$ is a collection of
closed, convex, rational polyhedral cones in ${\ov\cC}_I$,
 closed under taking intersections and faces, such that:
\begin{enumerate}
\item $\Supp\fF_I = {\ov\cC}_I$ and $\fF_I$ is
  locally finite in the positive cone $\cC_I$.
\item $\fF_I$ is invariant under the action of $\Gamma_I$
  with only finitely many orbits.
\end{enumerate}
Then for each $0$-cusp $I$,
the infinite type toric variety $X(\fF_I)$ contains an analytic open subset $V_I$
satisfying the following conditions:
 \begin{enumerate}
 \item $V_I$ contains all toric boundary strata of
   $X(\fF_I)$ which correspond to cones of $\fF_I$ that
   intersect $\cC_I$ (the only strata it does not fully contain
   are those corresponding to null rays and the origin).
 \item $V_I$ is $\Gamma_I$-invariant and the action of
   $\Gamma_I$ is properly discontinuous.
 \item The open stratum of $V_I$ modulo $\Gamma_I$ is
   the intersection of a neighborhood of the Type III
   point $P_I$ of $\overline{F}_{2d}\ubb$ with $F_{2d}$.
 \end{enumerate} 
 
Taking the union of $F_{2d}$ with the open sets from (3), for all $I$, 
we get a map
 \begin{displaymath}
   F_{2d} \cup_I (\Gamma_I\backslash V_I)
   \rightarrow \overline{F}_{2d}\ubb   
 \end{displaymath}
 with complete fibers. It surjects onto the union of $F_{2d}$ with an
 open neighborhood of the Type III boundary point. This map
 extends over the Type II boundary as a fibration in 
 finite quotients of abelian varieties. More explicitly,
 the preimage of the Type II boundary component
 $\Gamma_J\backslash F_J\subset \oF_{2d}^{\bb}$ in the
 toroidal compactification is the quotient by a subgroup of
 $O(J^\perp/J)$ of a family of abelian varieties isogenous to
 $J^\perp/J\otimes \cE$, the self product of the universal elliptic
 curve over $\Gamma_J\backslash F_J$.

The {\it toroidal compactification}
 $\overline{F}_{2d}^{\mathfrak{F}}$ associated to the $\Gamma$-admissible collection
 of fans $\mathfrak{F}$ is then the result of extending these abelian variety families
 from $F_{2d} \cup_I (\Gamma_I\backslash V_I)$, along
 all orbits of rank $2$ isotropic lattices $J$.
 The toroidal compactification admits a birational morphism 
 $\overline{F}_{2d}^\mathfrak{F}\to \overline{F}_{2d}\ubb$ which is an isomorphism on $F_{2d}$.

 For degree 2 K3 surfaces, there is only one 0-cusp, and the fan for this unique
 $0$-cusp has the support on $\opC=\opC_I$ in the vector space
 $N\otimes\bR$, where
 $N = I^\perp/I = H\oplus E_8^2 \oplus A_1$ is a lattice of
 signature $(1,18)$. The fan must be
 $\Gamma_I=O^+(N)$-invariant, where $O^+(N)$
 is the index 2 subgroup of the isometry group
 $O(N)$ preserving the positive cone $\pC$.
For us, the critical fact is:

\begin{proposition}\label{right-monodromy} The unipotent
$U_I\backslash \bD$ embeds into $I^\perp/I\otimes \C^*$
and the period map $C^*\to U_I\backslash \bD$ of a Kulikov model
$\cX\to (C,0)$ with
monodromy invariant $\lambda$ is well-approximated
by a translate of the cocharacter $\lambda\otimes \C^*$ near $0\in C$. \end{proposition}

\begin{proof} This is a direct consequence of Schmid's nilpotent orbit theorem. \end{proof}

 \subsection{Stable pair compactification}
\label{sec:ksba-comp}

First, we recall the definitions:
\va{I reworked this section to apply to $\oP_{2d}$ as well}

\begin{definition}\label{def:lc}
  A pair $(X,B = \sum b_iB_i)$ consisting of a normal variety and a
  $\bQ$-divisor with $0\le b_i\le 1$, $b_i\in\bQ$ is \emph{log
    canonical (lc)} if the divisor $K_X+B$ is $\bQ$-Cartier and for a
  resolution $f\colon Y\to X$ with a divisorial exceptional locus
  $\Exc(f) = \cup E_j$ and normal crossing
  $\cup f_*\inv B_i\cup\Exc(f) $, in the natural formula
  \begin{displaymath}
    f^*(K_X+B) = K_Y + \sum_i b_i f_*\inv B_i + \sum_j b_j E_j
    \quad\text{one has } b_j\le 1.
  \end{displaymath}
\end{definition}

\begin{definition}\label{def:slc}
  A pair $(X,B = \sum b_iB_i)$ consisting of a reduced variety and a
  $\bQ$-divisor is \emph{semi log canonical (slc)} if $X$ is $S_2$, has at
  worst double crossings in codimension 1, and for the normalization
  $\nu\colon X^\nu \to X$ writing
  \begin{displaymath}
    \nu^*(K_X+B) = K_{X^\nu} + B^\nu,
  \end{displaymath}
  the pair $(X^\nu, B^\nu)$ is log canonical. Here, $B^\nu = D + \sum b_i
  \nu\inv(B_i)$, and $D$ is the double locus.   
\end{definition}

\begin{definition}\label{def:stable-pair}
  A pair $(X,B)$ consisting of a connected projective
  variety $X$ and a $\bQ$-divisor is \emph{stable} if
  \begin{enumerate}
  \item $(X,B)$ has semi log canonical singularities, in particular
    $K_X+B$ is $\bQ$-Cartier.
  \item The $\bQ$-divisor $K_X+B$ is ample. 
  \end{enumerate}
\end{definition}

Next, we introduce the objects that we are interested in here:

\begin{definition}
  For a fixed degree $e\in\bN$ and fixed rational number
  $0<\epsilon\le 1$, a \emph{stable $K$-trivial pair} of type $(e,\epsilon)$ is
  a pair $(X,\epsilon R)$ such that
  \begin{enumerate}
  \item $X$ is a Gorenstein surface with $\omega_X\simeq\cO_X$,
  \item The divisor $R$ is an ample Cartier divisor of degree $R^2=e$.
  \item The surface $X$ and the pair $(X,\epsilon R)$ are slc. In
    particular, the pair $(X,\epsilon R)$ is stable in the sense of
    Definition~\ref{def:stable-pair}.
  \end{enumerate}
\end{definition}

\begin{definition}\label{def:family-stable-k3}
  A family of stable $K$-trivial pairs of type $(e,\epsilon)$ is a flat
  morphism $f\colon (\cX,\epsilon\cR)\to S$ such that
  $\omega_{\cX/S}\simeq \cO_X$ locally on $S$, the divisor $\cR$ is a
  relative Cartier divisor, such that every fiber is a stable $K$-trivial pair
  of type $(e,\epsilon)$.
\end{definition}

\begin{lemma}\label{lem:indep-of-epsilon}
  For a fixed degree $e$ there exists an $\epsilon_0(e)>0$ such that
  for any $0<\epsilon\le \epsilon_0$ the moduli stacks
  $\cM^\slc(e,\epsilon_0)$ and $\cM^\slc(e,\epsilon)$ coincide.
\end{lemma}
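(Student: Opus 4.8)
The plan is to show that the two moduli stacks parametrize the same objects and that stability is an open and closed condition independent of $\epsilon$ in a sufficiently small range. The key observation is that for a stable $K$-trivial pair $(X,\epsilon R)$ we have $K_X \sim 0$, so $K_X + \epsilon R \sim \epsilon R$; in particular $K_X + \epsilon R$ is ample automatically (since $R$ is ample by hypothesis), and the only conditions that genuinely constrain the pair are the slc conditions on $X$ and on $(X, \epsilon R)$. So the content of the lemma is: \emph{the slc property of $(X,\epsilon R)$, given that $X$ itself is slc Gorenstein with $\omega_X \cong \cO_X$, stabilizes as $\epsilon \to 0^+$.}

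First I would reduce, via the normalization $\nu\colon X^\nu \to X$ and Definition~\ref{def:slc}, to the statement that for a log canonical (indeed, for the normalization of a Gorenstein $K$-trivial slc surface, one that is log canonical for the boundary equal to its double locus $D$) pair $(X^\nu, D)$, the pair $(X^\nu, D + \epsilon \nu^{-1}R)$ is log canonical for all small $\epsilon > 0$, and moreover that the threshold below which this holds can be taken uniform over the whole moduli problem. For a \emph{single} pair this is immediate: log canonicity is an open condition on coefficients, so there is an $\epsilon_0(X,R) > 0$ that works; one must rule out the possibility that $\inf \epsilon_0(X,R) = 0$ as $(X,R)$ ranges over the moduli problem. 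Here I would invoke boundedness: the family $\cM^\slc(e,\epsilon)$ for any one fixed small $\epsilon$ is a bounded family (this is part of the construction of the moduli space of stable pairs, cf. \cite{kollar2015book-on-moduli, alexeev2006higher-dimensional-analogues}), so one can extract a uniform log canonical threshold $\epsilon_0(e)$ from a finite stratification, or more directly from the existence of a family over a finite-type base together with lower-semicontinuity of the log canonical threshold (the minimum of a lower-semicontinuous function on a Noetherian scheme is attained, hence positive).

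Next I would check the two directions of the claimed equality of stacks for $0 < \epsilon \le \epsilon_0$. If $(X, \epsilon_0 R)$ is a stable $K$-trivial pair, then for $\epsilon \le \epsilon_0$ the pair $(X, \epsilon R) = (X, \epsilon_0 R) - (\epsilon_0 - \epsilon) R$ has boundary with smaller coefficients, so it remains slc (decreasing an effective boundary preserves slc), $K_X + \epsilon R \sim \epsilon R$ is still ample, and the Gorenstein $K$-trivial conditions on $X$ are untouched; hence it is a stable $K$-trivial pair of type $(e,\epsilon)$. Conversely, suppose $(X,\epsilon R)$ is a stable $K$-trivial pair of type $(e,\epsilon)$ for some $\epsilon \le \epsilon_0$; I must produce stability of $(X, \epsilon_0 R)$. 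The surface $X$ is Gorenstein slc with $\omega_X \cong \cO_X$ and $R$ ample Cartier of degree $e$; by the uniformity of the threshold $\epsilon_0$ extracted above (applied to the bounded family of all such surfaces-with-ample-Cartier-divisor-of-degree-$e$, not just the stable ones — this is the point requiring care), the pair $(X, \epsilon_0 R)$ is slc, and again $K_X + \epsilon_0 R \sim \epsilon_0 R$ is ample. Finally, to identify the \emph{stacks} and not just the sets of objects, I would note that a family $(\cX, \epsilon \cR) \to S$ over any base is a family of stable $K$-trivial pairs of type $(e,\epsilon)$ iff $(\cX, \epsilon_0 \cR) \to S$ is one of type $(e, \epsilon_0)$: the conditions $\omega_{\cX/S} \cong \cO_\cX$, $\cR$ relative Cartier, flatness, and fiberwise the surface-level conditions are literally the same, and fiberwise slc of the pair with coefficient $\epsilon$ vs.\ $\epsilon_0$ coincide by the pointwise statement just proved; so the groupoids of families agree, and the identity functor is an isomorphism of stacks.

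The main obstacle is the uniformity step: extracting a single $\epsilon_0(e) > 0$ that works simultaneously for \emph{every} relevant pair. For this one genuinely needs that the collection of surfaces in question is bounded — which for the \emph{stable} pairs of a fixed type is known, but I would want the slightly stronger statement that Gorenstein $K$-trivial slc surfaces $X$ equipped with an ample Cartier $R$ of degree $e$ form a bounded family (this follows since $|NR|$ for suitable $N$ embeds all of them with bounded Hilbert polynomial, using $\omega_X \cong \cO_X$ and $R$ ample Cartier), and then apply lower-semicontinuity of the log canonical threshold in families (e.g.\ via \cite{kollar2015book-on-moduli}) to conclude that $\epsilon_0(e) := \inf_{(X,R)} \operatorname{lct}(X, D; \nu^{-1}R)$ is attained and strictly positive. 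Everything else is formal manipulation of the definitions.
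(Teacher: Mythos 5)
Your proof is correct and follows essentially the same route as the paper's: the pointwise observation that slc-ness of $(X,\epsilon R)$ stabilizes for small $\epsilon$ (the paper phrases this as the $\epsilon$-free criterion that $R$ avoid the finitely many lc centers of $X$, you phrase it via openness in the coefficient and the log canonical threshold), combined with boundedness of the pairs $(X,R)$ to make $\epsilon_0(e)$ uniform. The one point to tighten is your definition $\epsilon_0(e):=\inf_{(X,R)}\operatorname{lct}$: taken over the full bounded family this infimum is $0$ (pairs where $R$ contains an lc center have lct $0$), so one must restrict to the constructible sublocus where the lct is positive before invoking semicontinuity, exactly as the paper's ``$R$ does not contain any lc centers'' formulation does automatically.
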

\begin{proof}
  For a fixed surface $X$, there exists an $0<\epsilon_0\ll 1$ such
  that the pair $(X,\epsilon_0 R)$ is slc iff $R$ does not contain any
  centers of log canonical singularities: images of the divisors with
  codiscrepancy $b_i=1$ on a log resolution of singularities
  $Y\to X^\nu\to X$ as in Definitions~\ref{def:lc}, \ref{def:slc}.
  There are finitely many of such centers.  Then for any
  $\epsilon<\epsilon_0$, the pair $(X,\epsilon_0 R)$ is slc iff
  $(X,\epsilon R)$ is.

  Now since $\omega_X\simeq\cO_X$ and $R$ is ample Cartier of a fixed degree,
  the family of the pairs $(X,R)$ is bounded, and the number
  $\epsilon_0$ with this property can be chosen universally.
\end{proof}

We will be interested in the moduli space $M^\slc_e$ of such pairs,
and more precisely in the closure of $F_{2d}$ in $M_e^\slc$ for a chosen
\emph{intrinsic} polarizing divisor $R\in|NL|$.

\smallskip

We refer to \cite{kollar1988threefolds-and-deformations,
  kollar2022book-on-moduli},
\cite{alexeev2006higher-dimensional-analogues} for the existence and
projectivity of the moduli space of stable pairs $(X,\sum b_iB_i)$. In
general, when some coefficients $b_i$ are $\le\frac12$, there are
delicate problems with the definition of a family since a flat limit
of divisors may happen to be a nonreduced scheme with embedded
components. In our case the situation is much easier since $R$ is Cartier.

\begin{definition}
  A family of stable $K$-trivial pairs of degree $e$ is a family of type
  $(e,\epsilon_0)$, where $\epsilon_0(e)$ is chosen as in
  Lemma~\ref{lem:indep-of-epsilon}. We denote the corresponding
  moduli functor by $M^\slc_e$.  For a scheme $S$,
  \begin{math} 
    M^\slc_e(S) = 
    \{ \text{families of type $(e,\epsilon_0(e))$ over } S\}.
  \end{math}
\end{definition}

\begin{proposition}
  There is a Deligne-Mumford stack $\cM^\slc_e$ and a coarse moduli
  space $M^\slc_e$ of stable $K$-trivial pairs.
\end{proposition}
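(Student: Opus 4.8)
The plan is to deduce this from the general existence theorems for moduli of stable pairs, combined with the boundedness and properties of $K$-trivial pairs of type $(e,\epsilon_0)$ already recorded above. Concretely, I would proceed as follows.

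First I would observe that a stable $K$-trivial pair $(X,\epsilon_0 R)$ is in particular a stable pair in the sense of Definition~\ref{def:stable-pair}, with $B = \epsilon_0 R$ a $\bQ$-divisor whose unique coefficient $\epsilon_0 = \epsilon_0(e)$ is a fixed rational number. Hence the functor $M^\slc_e$ is a subfunctor of the moduli functor of stable pairs $(X,B)$ with $K_X+B$ ample of a fixed volume: indeed $(K_X+\epsilon_0 R)^2 = \epsilon_0^2 R^2 = \epsilon_0^2 e$ is determined, since $\omega_X\simeq\cO_X$. By the general existence results of Kollár and Alexeev \cite{kollar1988threefolds-and-deformations, kollar2015book-on-moduli, alexeev2006higher-dimensional-analogues}, there is a Deligne–Mumford stack, separated and of finite type, parametrizing such stable pairs, together with a projective coarse moduli space. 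It thus suffices to check that the conditions cutting out $M^\slc_e$ inside this ambient functor — namely that $X$ be Gorenstein with $\omega_X\simeq\cO_X$, that $R$ be Cartier (not merely $\bQ$-Cartier) of degree $e$, and that $\cR$ restrict to the given divisor in every fiber — are \emph{open and closed} conditions in flat families, so that $\cM^\slc_e$ is a union of connected components of the ambient stack, hence itself a Deligne–Mumford stack with projective coarse space $M^\slc_e$.

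Next I would verify each condition in turn. The condition $\omega_{\cX/S}\simeq\cO_\cX$ locally on $S$ (equivalently $K_\cX$ relatively trivial, equivalently $K_X\sim 0$ on fibers) is both open and closed in a flat family of stable pairs: on a connected base the Hilbert polynomial of $\omega_{X_s}$ relative to the polarization $K_{X_s}+\epsilon_0 R_s$ is locally constant, and triviality of $\omega$ is detected by $h^0(\omega_{X_s}) = h^0(\cO_{X_s}) = 1$ together with the vanishing of the first Chern class, which is a closed condition that is also open since $\chi$ is locally constant. The Cartier (as opposed to $\bQ$-Cartier) property of $\cR$ is built into Definition~\ref{def:family-stable-k3} — we are only parametrizing families where $\cR$ is a relative Cartier divisor — so no subtlety with embedded points of limit divisors arises, as already noted in the text after Lemma~\ref{lem:indep-of-epsilon}; and the degree $R^2 = e$ is locally constant in a flat family. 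Finally, by Lemma~\ref{lem:indep-of-epsilon} the coefficient $\epsilon_0 = \epsilon_0(e)$ may be chosen uniformly, so the functor is well defined independently of the small coefficient, and boundedness of the pairs $(X,R)$ (established in the proof of Lemma~\ref{lem:indep-of-epsilon}) guarantees that $\cM^\slc_e$ is of finite type.

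I expect the main obstacle to be the bookkeeping needed to show that $\cM^\slc_e$ is genuinely \emph{closed} inside the ambient stack of stable pairs — that is, that a flat limit of stable $K$-trivial pairs is again one. The potentially delicate points are: that the limit surface remains Gorenstein with trivial dualizing sheaf (this uses that $\omega$ is a line bundle whose relative Hilbert polynomial is constant, so it cannot degenerate to a non-invertible sheaf while the total space stays flat and the special fiber stays slc, hence Gorenstein in codimension one and $S_2$); and that the limit of the Cartier divisors $\cR$ stays Cartier of the prescribed degree, which holds precisely because we have restricted attention to families with $\cR$ relatively Cartier. Once these openness/closedness statements are in place, the stack structure and the existence and projectivity of the coarse space $M^\slc_e$ follow immediately from the cited general theory.
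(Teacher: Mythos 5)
Your proposal takes a genuinely different route from the paper, and the route has a gap. The paper's proof is a direct construction: $\cM^\slc_e$ and $M^\slc_e$ are built as quotients of appropriate Hilbert schemes by a $\PGL$-action, and the only point the authors single out is that the sheaves $(\omega^{\otimes n}_{\cX/S}(n\cR))^{**}$, whose formation must commute with base change for the Hilbert-scheme argument to work, are honest line bundles here because $\omega_{\cX/S}$ and $\cO_\cX(\cR)$ are invertible. You instead try to exhibit $\cM^\slc_e$ as a union of connected components of an ambient moduli stack of stable pairs with coefficient $\epsilon_0$. The gap is the closedness claim, which you defer to ``bookkeeping.'' The assertion that a flat limit, inside the moduli of stable pairs, of stable $K$-trivial pairs with Cartier polarizing divisor is again such a pair is essentially the valuative criterion of properness for $\cM^\slc_e$ --- and the paper explicitly writes that it does \emph{not} claim $M^\slc_e$ is proper. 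What the paper does prove (Theorem~\ref{thm:extend-family-stable-k3}) is the existence of stable $K$-trivial limits for one-parameter families of \emph{K3 pairs}, and that argument needs Kulikov models, Shepherd-Barron's elementary modifications, and the careful arrangement (*) that $\cR$ avoid the strata; nothing about it is a formal semicontinuity statement. For an arbitrary slc $K$-trivial pair degenerating inside the ambient stack there is no a priori reason the limit has $\omega_{X_0}\simeq\cO_{X_0}$ with $R_0$ Cartier. Your openness argument is also shaky: triviality of a line bundle in a flat family is a \emph{closed} condition (separatedness of the Picard scheme), and the appeal to ``vanishing of the first Chern class'' plus local constancy of $\chi$ does not upgrade it to an open one. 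Finally, there is a mild circularity in using the general moduli stack of stable pairs with coefficient $\epsilon_0\le\frac12$ as the ambient object, since its very definition involves exactly the subtleties about limits of divisors with embedded components that the Cartier hypothesis on $\cR$ is designed to eliminate.

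The conclusion itself does not require closedness: to get a Deligne--Mumford stack with a coarse space (which is all the proposition asserts --- properness and projectivity are handled separately, for the closures $\oP_{N,2d}$ and $\oF_{2d}^\slc$, by the later theorems), it suffices that the functor be an algebraic stack of finite type with finite automorphism groups, after which Keel--Mori applies. So either drop the ``union of connected components'' framing and show only that your conditions cut out an algebraic (e.g.\ locally closed) substack, or, more in the spirit of the paper, construct the stack directly from a Hilbert scheme of the pluricanonically embedded pairs $(X,\cR)$, observing that invertibility of $\omega_{\cX/S}$ and $\cO_\cX(\cR)$ makes the relevant sheaves commute with arbitrary base change.
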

\begin{proof}
The spaces $\cM^\slc_e$ and $M^\slc_e$ are constructed by standard
methods, as quotients of appropriate Hilbert schemes by a $\PGL$ group
action. Again, for general stable pairs there are delicate questions
of the formation of $(\omega^{\otimes n}_{\cX/S}(nR))^{**}$ commuting
with base changes. But in our case both $\omega_{\cX/S}$ and
$\cO_\cX(\cR)$ are invertible, so these questions disappear.
\end{proof}

We do not prove that the moduli space $M_e^\slc$ is proper
but we do prove below that it provides a compactification for the
moduli spaces of ordinary K3 surfaces. 
(The components of $M_e^\slc$ where $X$ is generically non-normal
require additional arguments.)
A related moduli space is:

\begin{definition}
  Let $N\in\bN$. The moduli stack $\cP_{N,2d}$ parameterizes proper
  flat families of pairs $(X, R)$ such that $(X,L)$ is a polarized K3
  surface with $ADE$ singularities and a primitive ample line bundle
  $L$, $L^2=2d$, and $R\in |NL|$ is an arbitrary divisor. One has
  $R^2=2dN^2$. In particular, one defines $\cP_{2d}:= \cP_{1,2d}$.
\end{definition}

If we take $\epsilon_0(e)$ as in Lemma~\ref{lem:indep-of-epsilon},
then the pair $(X,\epsilon_0 R)$ is stable. Obviously, the stack
$\cP_{N,e}$ is fibered over the stack $\cF_{2d}$ with fibers
isomorphic to $\bP^{dN^2+1}$. The automorphism groups of stable pairs
are finite, and it is easy to see that the stack $\cP_{N,2d}$  is
coarsely represented by a scheme, denoted $P_{N,2d}$.

\begin{definition}
  One defines $\oP_{N,2d}$ to be the closure of the coarse moduli
  space $P_{N,2d}$ in $M^\slc_e$ for $e=2dN^2$.  A \emph{canonical
    choice} of a divisor $R\in |NL|$ for each $(X,L)\in F_{2d}$ gives an embedding $F_{2d}
  \subset P_{N,2d}$. Its closure in $\oP_{N,2d}$ is denoted by
  $\oF_{2d}^\slc$. 
\end{definition}

\begin{theorem}
  $\oP_{N,2d}$ and thus also $\oF_{2d}^\slc$ are proper and
  projective. 
\end{theorem}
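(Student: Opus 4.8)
The plan is to deduce properness and projectivity of $\oP_{N,2d}$ (and hence of its closed subscheme $\oF_{2d}^\slc$) from the corresponding general facts about the moduli space $M_e^\slc$ of stable $K$-trivial pairs, together with the observation that $\oP_{N,2d}$ is closed in $M_e^\slc$ by construction. Concretely, I would argue in three steps.

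First, I would invoke the general existence and projectivity theory for moduli of stable pairs, as recalled in the excerpt (citing \cite{kollar1988threefolds-and-deformations, kollar2015book-on-moduli, alexeev2006higher-dimensional-analogues}). The point to verify is that the relevant moduli functor $M_e^\slc$ fits into that framework: one needs (i) boundedness of the family of stable $K$-trivial pairs of type $(e,\epsilon_0)$, and (ii) valuative properness of the moduli stack. Boundedness follows because $\omega_X\simeq\cO_X$ and $R$ is ample Cartier of fixed degree $e$, so $K_X+\epsilon_0 R=\epsilon_0 R$ has bounded volume and the surfaces have bounded index; this is exactly the boundedness statement already used in the proof of Lemma~\ref{lem:indep-of-epsilon}. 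Valuative properness — the statement that a family of stable $K$-trivial pairs over a punctured curve extends uniquely, after finite base change, to a family over the curve — is where the real content sits, and it follows from the stable reduction theorem for stable pairs together with the fact, emphasized in the excerpt, that here both $\omega_{\cX/S}$ and $\cO_\cX(\cR)$ are invertible, so the usual difficulties with flat limits of divisors and with commuting of reflexive powers with base change do not arise. This gives that $M_e^\slc$ is a separated, finite-type Deligne--Mumford stack which is proper, and that its coarse space is projective (the polarization coming from a determinant-of-cohomology line bundle, or equivalently from the very ample Cartier divisor $\omega^{\otimes n}_{\cX/S}(nR)=\cO(nR)$ used to embed the fibers in a fixed projective space for the Hilbert-scheme construction).

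Second, $\oP_{N,2d}$ is \emph{defined} as the closure of $P_{N,2d}$ in $M_e^\slc$ (with $e=2dN^2$), hence it is a closed subscheme of $M_e^\slc$. A closed subscheme of a proper scheme is proper, and a closed subscheme of a projective scheme is projective; so $\oP_{N,2d}$ is proper and projective. Likewise $\oF_{2d}^\slc$ is by definition the closure of $F_{2d}$ in $\oP_{N,2d}$, hence a closed subscheme of the projective scheme $\oP_{N,2d}$, and therefore itself proper and projective.

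The main obstacle is the valuative-properness (stable reduction) input of the first step: one must know that after a finite base change a one-parameter family of smooth polarized K3 surfaces with its divisor $R\in|NL|$ admits a limit which is a stable $K$-trivial pair of type $(e,\epsilon_0)$, i.e. that the canonical model of a Kulikov-type limit exists and has slc singularities with $K+\epsilon_0 R$ ample. In the generality of $M_e^\slc$ this is the abstract KSBA stable reduction theorem, which I would simply cite; the concrete verification for the case $e=12$ relevant to $\oF_2^\slc$, including the explicit identification of the limiting surfaces, is carried out later in the paper (Sections~\ref{sec:degenerate-k3s} and~\ref{sec:family}) and is not needed for the abstract properness statement here.
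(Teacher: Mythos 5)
There is a genuine gap. Your argument routes everything through the claim that the ambient moduli space $M_e^\slc$ of stable $K$-trivial pairs is itself proper and projective, and then takes closed subschemes. But the paper explicitly declines to make that claim (``We do not claim that the moduli space $M_e^\slc$ is proper''), and it does not follow by citing the abstract KSBA framework: stable reduction for pairs produces \emph{some} slc limit with $K+\epsilon R$ ample, but it does not by itself guarantee that the limit lies in the functor $M_e^\slc$ --- i.e.\ that the central fiber still has $\omega_{X_0}\simeq\cO_{X_0}$ (honestly trivial, not just torsion or $\bQ$-trivial), that $X_0$ itself is slc, and that $\cR_0$ remains an ample \emph{Cartier} divisor of degree $e$. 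Establishing exactly these properties for limits of K3 pairs is the content of Theorem~\ref{thm:extend-family-stable-k3}, which the paper proves by a concrete geometric argument: pass to a Kulikov model, arrange via Claim~\ref{cla:kulikov-avoid-strata} that the closure of $\cR^*$ contains no stratum of the central fiber (condition (*)), run Shepherd-Barron's elementary modifications (only types 0 and I are needed once (*) holds), and contract by $\cL^n$. The paper then gets properness of $\oP_{N,2d}$ from this extension theorem via the valuative criterion checked on the dense open locus $P_{N,2d}$ --- not from properness of the ambient space. Your proposal defers precisely the step that constitutes the paper's proof.

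A second, smaller gap concerns projectivity. You suggest the polarization comes from a determinant-of-cohomology line bundle or from the very ample divisor used in the Hilbert-scheme construction; but the coarse space is a quotient of a Hilbert scheme by $\PGL$, and projectivity of such quotients is not automatic. The paper instead invokes the positivity/projectivity results of Fujino and Kov\'acs--Patakfalvi \cite{kovacs2017projectivity}, which is a substantive theorem rather than a formal consequence of the construction. The part of your argument that does match the paper is the final reduction: once $\oP_{N,2d}$ is proper and projective, $\oF_{2d}^\slc$ is a closed subscheme of it and inherits both properties.
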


\begin{proof}
  Properness follows from the following theorem. Projectivity follows
  from it by results of Fujino and Kov\'acs-Patakfalvi
  \cite{kovacs2017projectivity}.
\end{proof}

\begin{theorem}\label{thm:extend-family-stable-k3}
  For a fixed degree $e$, every family
  $f^*\colon \cX^* \to C^*=C\setminus 0$ over a smooth curve of K3
  surfaces with ADE singularities and ample $R$, $R^2=e$, can be
  extended to a family of stable K3 pairs
  $(\cX',\epsilon_0(e) \cR')\to C'$ of type $(e,\epsilon_0(e))$
  possibly after a ramified base change $C'\to C$.
\end{theorem}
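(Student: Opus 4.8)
The plan is to reduce the statement to the semistable reduction theorem of Kulikov--Persson--Pinkham together with Shepherd-Barron's extension of polarizations, and then to run the minimal model program relative to the base $C'$. First I would take the family $f^*\colon \cX^*\to C^*$; after a finite base change, resolving the ADE singularities in the fibers and applying the Kulikov theorem quoted at the start of Section~\ref{sec:kulikov}, I obtain a semistable model $\pi\colon \cX'\to C'$ with $\omega_{\cX'/C'}\simeq\cO_{\cX'}$. By Shepherd-Barron \cite{shepherd-barron1981extending-polarizations}, I may assume moreover that the relatively nef line bundle corresponding to $R$ (more precisely, the pullback of $\cO(R)$ off the central fiber) extends to a nef line bundle $\cL$ on $\cX'$, and hence an effective divisor $\cR'$ restricting to $R$ on nearby fibers.

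Next I would run the relative MMP. On the Kulikov model $\cX'$ the log canonical class is $K_{\cX'/C'}+\epsilon_0\cR'\equiv \epsilon_0\cR'$, which is nef but not big relative to $C'$; the stable model is its relative log canonical model $\cX'' = \operatorname{Proj}_{C'}\bigoplus_m \pi_*\cO_{\cX'}(m\epsilon_0\cR')$. The point is that $\cR'$ meets every fiber in an ample Cartier divisor (as $R$ is ample of degree $e$ on the generic fiber and this is preserved under the base change and small modifications off the central fiber, while on the central fiber one extends $\cR'$ as the flat limit), so on the generic fiber $K_X+\epsilon_0 R=\epsilon_0 R$ is already ample and the stable model agrees with $X$ there. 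Contracting the $\cR'$-trivial curves and components in the central fiber produces $\cX''\to C'$, and I would check, using Lemma~\ref{lem:indep-of-epsilon} to handle the choice of $\epsilon_0$, that the central fiber $(\cX''_0,\epsilon_0\cR''_0)$ is slc with ample $K+\epsilon_0\cR''$; this is where the slc condition must be verified from the Kulikov normal-crossing structure (the central fiber has only double and triple points, each component being a rational surface with an anticanonical cycle, so the singularities of $\cX'_0$ and of the pair are slc by inspection, and slc is preserved by the log canonical model).

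The main obstacle I expect is \textbf{finiteness of the relative log canonical ring} $\bigoplus_m \pi_*\cO_{\cX'}(m\epsilon_0\cR')$ and the fact that its $\operatorname{Proj}$ has the right fibers: one needs the log canonical model to commute with base change to the central point, i.e. that the formation of $(\omega^{\otimes m}_{\cX'/C'}(m\epsilon_0\cR'))^{**}=\cO(m\epsilon_0\cR')^{**}$ and of $\pi_*$ of it behaves well. Here the hypothesis that both $\omega_{\cX'/C'}$ and $\cO_{\cX'}(\cR')$ are \emph{invertible} (noted after Definition~\ref{def:family-stable-k3} and in the proof of the existence of $\cM^\slc_e$) is exactly what removes the delicate embedded-point and non-commutation-with-base-change issues that plague general stable pairs: the log canonical ring is then a ring of sections of a genuine line bundle, finitely generated by standard results (Fujino, or the relative base-point-free theorem for klt pairs applied after a small perturbation). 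Once finite generation is in hand, $\cX''\to C'$ is the desired family, and semistable reduction guarantees it exists possibly after the stated further ramified base change. I would close by remarking that uniqueness of the stable limit is automatic from separatedness of $M^\slc_e$, though it is not needed for the statement.
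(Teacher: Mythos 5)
There is a genuine gap, and it sits exactly where you write that the slc condition for the central fiber holds ``by inspection'' from the Kulikov normal-crossing structure. For a normal crossings surface $\cX'_0$, the log canonical centers are not only the irreducible components but also the double curves and the triple points. By the criterion underlying Lemma~\ref{lem:indep-of-epsilon}, the pair $(\cX'_0,\epsilon_0\cR'_0)$ is slc for small $\epsilon_0$ if and only if $\cR'_0$ contains \emph{no} lc center; so if the flat limit of $\cR^*$ happens to contain a double curve, or even just to pass through a triple point, the pair fails to be slc for \emph{every} $\epsilon_0>0$, and no choice of $\epsilon_0$ rescues it. Neither the Kulikov theorem nor Shepherd-Barron's extension of the nef line bundle controls this: a nef divisor can perfectly well contain a double curve (intersecting it in degree zero). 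The paper's proof devotes its main technical effort to precisely this point: Claim~\ref{cla:kulikov-avoid-strata} shows, by a local toric computation at the $0$-strata using the Newton polyhedron of the local equation of the divisor, that after a further ramified base change and a carefully chosen triangulation one can arrange condition (*) --- the closure of $\cR^*$ contains no stratum of $\cX_0$ --- and then observes that the Shepherd-Barron flops of types 0 and I preserve (*) while the type II flops (along double curves) are no longer needed. Your argument never arranges anything like (*), so the step ``slc by inspection'' does not go through.

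A secondary remark: the obstacle you single out as the main one --- finite generation of the relative log canonical ring and its compatibility with base change --- is not actually where the difficulty lies in this setting. Shepherd-Barron's second theorem already gives that $\cL^n$ is relatively base point free for $n\ge 4$ and produces the contraction $\cX_4\to\ov{\cX}_4$ directly, so no MMP finite-generation input is needed; the paper simply invokes it. Your instinct that invertibility of $\omega$ and $\cO(\cR)$ removes the base-change pathologies is correct, but that is used in the construction of the moduli stack, not in the existence of the stable limit. The real content of the theorem is the positioning of the limit divisor relative to the strata, which your proposal omits.
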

\begin{proof}
  (Cf. \cite[Thm.2.11, Rem.2.12]{laza2016ksba-compactification})
  The proof is achieved by modifying that of a theorem of
  Shepherd-Barron
  \cite[Thm.1]{shepherd-barron1981extending-polarizations}.  His
  theorem says that if $\cX\to C$ is a semistable model with
  $K_{\cX}=0$ and $\cL^*$ is a relatively nef line bundle on
  $\cX^*$ of positive degree then there exists another
  semistable model to which $\cL^*$ extends as a relatively nef line
  bundle $\cL$.  This is done over $C$, without a base change.  Then
  \cite[Thm.2]{shepherd-barron1981extending-polarizations} says that
  $\cL^n$ for $n\ge 4$ gives a contraction $\pi\colon\cX\to \ov{\cX}$
  so that $\omega_{\ov\cX}\simeq\cO_{\cX}$, with $\ov{\cL}$ an ample line
  bundle on $\ov{\cX}$ and $\cL = \pi^*(\ov{\cL})$.

  Now let $f\colon (\cX^*,\epsilon \cR^*)\to C^*$ be a family
  of K3 surfaces with ADE singularities and a relatively ample Cartier
  divisor $\cR$. After shrinking the base, we can simultaneously
  resolve the singularities to obtain a family of smooth K3s
  $(\cX^*_1,\cR^*_1)$ with a relatively big and nef effective
  divisor. By Theorem \ref{kpp-thm}, after a further base change we get a semistable
  model $\cX_2\to C$ with $K_{\cX_2}=0$. We are now in a situation
  where Shepherd-Barron's theorem applies.  However, first we make another
  base change that exists by Claim~\ref{cla:kulikov-avoid-strata} to
  obtain a Kulikov model $\cX_3$ satisfying condition~(*):
  
  \begin{itemize}
    \item[(*)] The closure of $\cR^*$ in $\mathcal{X}$
    does not contain any strata (double curves or triple points) of
    the central fiber $\cX_0$.
    \end{itemize}
  
  The proof of
  \cite[Thm.1]{shepherd-barron1981extending-polarizations} proceeds by
  starting with a divisor $\cR_3$ which does not contain an entire
  component of the central fiber. One then makes it nef using flops
  along curves $E$ with $\cR_3\cdot E<0$. The flops are called elementary
  modifications. They are of three types: (0) along an interior
  $(-2)$-curve, (I) along a curve intersecting a double curve, and
  (II) along a double curve.

  But with condition (*) achieved, the divisor $\cR_3$ already
  intersects the double curves non-negatively, and the flops of type
  II are not needed. The flops of types 0 and I preserve (*). Thus,
  the end result is a model $\cX_4\to C'$ with an effective,
  relatively nef divisor $\cR_4$ satisfying (*).

  Since the central fiber $(\cX_4)_0$ is normal crossing, for
  $0<\epsilon\ll1$ the pair $(\cX_4,\epsilon \cR)$ is slc. Then the
  contraction $\cX_4\to \ov{\cX}_4$ provided by
  \cite[Thm.2]{shepherd-barron1981extending-polarizations} gives a
  family $(\ov{\cX}_4,\epsilon \ov{\cR}_4)$ of stable pairs extending
  the original family $(\cX^*, \epsilon\cR^*)\to C^*$ after a base
  change $C'\to C$.
\end{proof}

\begin{claim}\label{cla:kulikov-avoid-strata}
  For any Kulikov model $\cX\to C$ there exists a finite base change
  $C'\to C$ and birational modification to a Kulikov model
  $\cX'\to C'$ of $\cX\times_C C'$ satisfying {\rm (*)}.
\end{claim}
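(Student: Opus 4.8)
The goal is to take a Kulikov model $\cX\to C$ and, after a finite base change $C'\to C$, produce a birational modification $\cX'\to C'$ that is again a Kulikov model of $\cX\times_C C'$ and has the property that the closure $\cR'$ of a fixed divisor $\cR^*$ on the generic fiber misses all the double curves and triple points of the central fiber. The strategy is to work one stratum at a time, lowest-dimensional first, and to use the standard local toric description of a Kulikov degeneration near each stratum together with a base change to "spread out" where the closure of $\cR^*$ meets the central fiber.

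\smallskip

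\emph{Step 1: local structure at a triple point.} Near a triple point of $\cX_0$, the total space $\cX$ is smooth and $\cX_0$ is a normal crossing divisor $z_1z_2z_3 = t$, where $t$ is the parameter on $C$; this is the ordinary triple point, locally the toric threefold whose fan is the cone over a triangle. After the base change $t = (t')^n$, the pullback acquires $A_{n-1}$-type transverse singularities along the double curves and a worse singularity at the triple point; a small (toric) resolution, equivalently a suitable subdivision of the fan, replaces the triple point by a chain/string of new components and restores semistability. One checks that this operation stays within the class of Kulikov models: the new central fiber is still reduced normal crossing, $\omega$ is still trivial because we only perform crepant toric modifications, and the monodromy type is unchanged. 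The key point is that after such a base change and resolution, the closure of $\cR^*$ — which is a Cartier divisor — passes through the new configuration in a way that, generically in the choice of resolution, avoids the (finitely many) new triple points. One makes this precise by noting that $\cR^*$ meets a neighborhood of the old triple point in a curve, whose closure is a $1$-dimensional subscheme of the new central fiber; after $n\gg 0$ and an appropriate choice of small resolution, this curve can be made to sit inside the smooth locus of a single new component.

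\smallskip

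\emph{Step 2: local structure along a double curve.} Away from the (now cleared) triple points, $\cX$ is locally $z_1 z_2 = t$ along a double curve $D$, i.e.\ $A_1\times(\text{disk})$; base change $t=(t')^n$ and resolve the resulting $A_{n-1}$ singularity along $D$ by inserting a chain of $\bP^1$-bundles over $D$. Again this is a crepant toric modification preserving the Kulikov conditions. The closure of $\cR^*$ meets a tubular neighborhood of $D$ in a divisor; its restriction to the central fiber is a curve whose multiplicity over $D$ is controlled, and after a large base change and a generic choice in the resolution one arranges that this curve is disjoint from the double curves of the new central fiber (it gets pushed into the interiors of the new $\bP^1$-bundle components). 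Because Step 1 has already made things disjoint from triple points, the modifications of Step 2 can be performed in a neighborhood of $D\setminus\{\text{triple points}\}$ without reintroducing problems at triple points, and the two steps are compatible.

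\smallskip

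\emph{Main obstacle.} The delicate part is \textbf{globalizing and making the two local constructions compatible}: the base-change exponents and the resolution choices must be made uniformly so that the local modifications patch to a genuine birational modification of $\cX\times_C C'$ over all of $C'$, and so that clearing the triple points (Step 1) is not undone when one clears the double curves (Step 2). One handles this by first choosing a single base change $C'\to C$ (a common $n$) that works simultaneously at all strata — there are finitely many triple points and finitely many double curves — and then performing the toric resolutions in a fixed compatible fan-subdivision near the whole singular locus of $\cX_0$. The verification that the resulting $\cX'\to C'$ is still semistable with trivial relative dualizing sheaf, and that the final closure $\cR'$ genuinely avoids all strata, is then a matter of checking the conditions stratum-by-stratum using the explicit toric models, since all the modifications used are crepant and toric in suitable local coordinates. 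A secondary technical point is that one must know $\cR^*$ extends to \emph{some} divisor at each stage so that "its closure" makes sense; this is immediate since $\cX'$ is normal and $\cR^*$ is Cartier on the generic fiber.
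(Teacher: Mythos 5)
Your overall skeleton — reduce to a local toric computation at the strata of $\cX_0$, make a ramified base change $t=s^d$, and resolve by subdividing the fan — is the same as the paper's. But the heart of the claim is precisely the step you leave unproven: \emph{which} subdivision makes the strict transform of $\cR$ miss the new strata. You assert that this happens ``generically in the choice of resolution'' or for ``an appropriate choice'' after $n\gg 0$, but the set of triangulations is discrete, so genericity has no meaning here, and in fact an arbitrary triangulation does \emph{not} work no matter how large the base change is. Concretely, at a triple point $xyz=t$ with local equation $f=\sum c_j\chi^{m_j}$ for $\cR$, the strict transform of $\{f=0\}$ misses the torus--fixed point of a maximal cone $\sigma'$ of the refined fan if and only if $\sigma'$ is contained in a normal cone of the Newton polyhedron $P=\Conv\big(\cup_j(m_j+\sigma^\vee)\big)$ at one of its vertices; so the refinement must refine the normal fan $\Nor(P)$, and a triangulation whose elementary triangles cross the walls of $\Nor(P)$ will leave $\cR$ passing through triple points. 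This is exactly the mechanism the paper supplies: the base-change degree $d$ and the admissible triangulations of the triangle $\sigma\cap\{a+b+c=d\}$ are read off from $\Nor(P)$, i.e.\ from the monomials actually occurring in $f$, and any further refinement into volume-one triangles then works because the vertices of $P$ have nonzero coefficients. Without some such input from the equation of $\cR$, your Steps 1 and 2 do not go through.

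A secondary remark: your two-pass structure (first clear triple points, then double curves) creates the compatibility worries you flag in your last paragraph, whereas treating each $0$-stratum at once via the Newton polyhedron (with the double-curve case as a degenerate, simpler instance of the same computation) avoids them; the only genuine globalization issue is taking a common multiple of the finitely many local base-change degrees, which is harmless. The parts of your argument about crepancy, semistability being preserved by these toric modifications, and the closure of $\cR^*$ making sense are fine and agree with the paper.
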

\begin{proof}
  This is a local toric computation. We give an argument for a triple
  point which by simplification also applies to double curves. An
  obvious modification of this proof works for a semistable
  degeneration in any dimension.

  Let the triple point be the origin with a local equation $xyz=t$. A
  local toric model of it is $\bA^3_{x,y,z}$. Its fan is the cone
  $\sigma$ that is the first octant in $\bR^3$ with the lattice
  $N=\bZ^3$. In the lattice of monomials $M=N^*\simeq\bZ^3$ the dual
  cone $\sigma^\vee$ is the first octant as well. A ramified base
  change $t=s^d$ means choosing the new lattices
  \begin{displaymath}
    M' = M + \bZ \frac{(1,1,1)}{d} , \quad
    N\supset N' = \{n = (a,b,c) \mid n \cdot \frac{(1,1,1)}{d} \in \bZ  \}.
  \end{displaymath}
  Choosing a Kulikov model locally at this $0$-stratum is equivalent
  to choosing a triangulation $\cT$ of the triangle $\sigma\cap \{a+b+c =
  d\}$ with the vertices $(d,0,0)$, $(0,d,0)$, $(0,0,d)$ into elementary
  triangles of lattice volume 1. Then the new fan is obtained by
  subdividing $\sigma$ into the cones over these elementary triangles.

  We note that an arbitrary triangulation $\cT$ will \emph{not}
  achieve the condition (*). Instead, it has to be chosen
  carefully. Using $x,y,z$ as local parameters, the equation of the
  divisor is a power series $f\in k[[x,y,z]]$.  Let $\{m_j\}$ be the
  set of the monomials appearing in $f$. Let $P$ be the convex hull of
  $\cup_j \big( m_j + \sigma^\vee \big)$. This is an infinite
  polyhedron but it has only finitely many vertices, say $m_j$ for
  $1\le j\le r$. 

  Let $\Nor(P)$ be the normal fan of $P$; it is a refinement of the
  cone $\sigma$. Let~$X'$ be the toric variety, possibly singular, for
  this fan. We have a toric blowup $X'\to\bA^3$ modeling a blowup
  $f\colon \cX'\to\cX$. The strict preimage of the divisor $\cR$ has
  the same equation~$f$ which still makes sense for each of the
  standard open sets $\bA^3$ that cover $X'$. The reason for taking the
  convex hull was this: The vertices of $P$ correspond to the
  $0$-dimensional strata $x'_j$ of $X'$ and the fact that for each of
  them the corresponding monomial has a nonzero coefficient means that
  the divisor does not pass through $x'_j$. These points are in a
  bijection with the maximal-dimensional cones $\sigma'_j$ of
  $\Nor(P)$. Subdividing these cones further means blowing up at the
  points $x'_j$ further. The preimage of the divisor under these
  blowups will not contain any strata on the blowup.

  So the final recipe is this: From the equation of $f$ obtain the
  polyhedron $P$ and its normal fan $\Nor(P)$. It has finitely many
  rays $\bR_{\ge0}(a_i,b_i,c_i)$, $(a_i,b_i,c_i)\in \bZ^3_{\ge0}$. Let
  $d_i = a_i+b_i+c_i$ and let $d$ be the $\operatorname{gcd}(d_i)$ so
  that these rays are cones over some integral points of the triangle
  $\sigma\cap \{a+b+c = d\}$. The fan $\Nor(P)$ gives a subdivision of
  this triangle. Refine it arbitrarily to a triangulation $\cT$ into
  volume~1 triangles. This defines a Kulikov model locally.  Then
  repeat this procedure at all the $0$-strata of $\cX$. The resulting
  Kulikov model satisfies the condition (*).
\end{proof}

\begin{remark}
  Difficulties with the moduli spaces of stable pairs $(X,B=\sum
  b_iB_i)$ arise when $K_X+B$ is $\bQ$- or
  $\bR$-Cartier but $K_X$ and $B$ by themselves are
  not. One solution was proposed in
  \cite[Sec. 1.5]{alexeev2015moduli-weighted}: choose the coefficients
  $b_i$ so that $(1,b_1,\dotsc, b_n)$ are $\bQ$-linearly
  independent. In the situation at hand this means picking 
  $\epsilon$ to be irrational. We do not need this trick for the K3
  surfaces, however, since by the above the divisor $R$ remains
  Cartier in the interesting part of the compactified moduli space. 
\end{remark}

\begin{theorem}\label{thm:slc-over-bb}
  The rational maps $(\oP_{N,2d})^\nu \dashrightarrow
  \oF_{2d}\ubb$ and, for a
  canonical choice of a polarizing divisor,
  $\big(\oF_{2d}^\slc\big)^\nu \dashrightarrow \oF_{2d}\ubb$ from the
  normalizations of $\oP_{N,2d}$ and $\oF_{2d}^\slc$ to the
  Baily-Borel compactification are regular.
\end{theorem}
\begin{proof}
  We apply Lemma~\ref{lem:extend-rat-map} with
  $X=(\oP_{N,2d})^\nu$ resp. $X=\big(\oF^\slc_{2d}\big)^\nu$,
  and $Y=\oF_2\ubb$.  We claim that
  the condition of \eqref{lem:extend-rat-map} is satisfied.
  Namely, for a one-parameter family of stable K3 surfaces over
  $(C,0)$, the central fiber uniquely determines if the limit in the
  Baily-Borel compactification is of Type II or Type III, and if it is
  of Type II then the $j$-invariant of the elliptic curve is uniquely
  determined.

  As in the proof of Theorem~\ref{thm:extend-family-stable-k3}, we get
  a Kulikov model $\cX$ to which a big and nef line bundle
  $\cL=\cO_\cX(\cR)$ extends and then a contraction $\cX\to\ov{\cX}$
  to the canonical model. If $\cX$ is of Type III then $\ov{\cX}$ is a
  union of rational surfaces with rational singularities, glued along
  rational curves. If $\cX$ is of Type II then either some components
  of $\ov{\cX}$ are glued along an elliptic curve $E$ or, if all the
  elliptic curves that constitute the double locus of $\cX$ are contracted, a
  component of $\ov{\cX}$ has an elliptic singularity, resolved by
  inserting $E$. So the Type, and for Type II the $j$-invariant of $E$,
  can be recovered from the central fiber $\ov{\cX}_0$. 
\end{proof}

\begin{lemma}\label{lem:extend-rat-map}
  Let $X$ and $Y$ be proper varieties, with $X$ normal.
  Let $\varphi\colon X\ratmap Y$ be a rational map,
  regular on an open dense subset $U\subset X$.  Let $(C,0)$ be a
  regular curve and $f\colon C\to X$ a morphism whose image meets $U$.
  Let $g\colon C\to Y$ be the unique extension of $f\circ\varphi$
  which exists by the properness of $Y$.

  Assume that for all $f$ with the same $f(0)$, there are only
  finitely many possibilities for $g(0)$.  Then $\varphi$ can be
  extended uniquely to a regular morphism $X\to Y$.
\end{lemma}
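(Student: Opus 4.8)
The plan is to prove Lemma~\ref{lem:extend-rat-map} by resolving the indeterminacy of $\varphi$ and showing that the exceptional fibers of the resolution are contracted by the lifted map to $Y$, then applying the rigidity lemma. Concretely, let $W\subset X\times Y$ be the closure of the graph of $\varphi|_U$, with projections $p\colon W\to X$ and $q\colon W\to Y$. Since $X$ is normal and $p$ is birational and proper, it suffices to show that $q$ is constant on every fiber $p\inv(x)$; then $q$ factors through $p$ (e.g. by the rigidity lemma, or directly: $p_*\cO_W=\cO_X$ because $X$ is normal, so $q^*$ of any local function on $Y$ descends), giving the desired morphism $X\to Y$ extending $\varphi$.

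So fix a point $x_0\in X$ and suppose $y_0,y_1\in q(p\inv(x_0))$ are two points in the fiber. The key input is that $W$ is a variety, hence its smooth locus is dense and the points of $p\inv(x_0)$ lying over $y_0$ and $y_1$ can be joined, after passing to a suitable curve, by maps from regular curves: choose an irreducible curve $B\subset W$ through a point over $(x_0,y_0)$ meeting $p\inv(U)$, take its normalization $C\to B\hookrightarrow W$, and compose with $p$ and $q$ to get $f\colon C\to X$, $g\colon C\to Y$ with $f(0)=x_0$, $g(0)=y_0$, and $f$ meeting $U$; since $g$ is the unique extension of $f\circ\varphi$ by properness of $Y$, this realizes $y_0$ as a ``limit value'' $g(0)$ for a curve with $f(0)=x_0$. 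Do the same for $y_1$. The hypothesis says that as $f$ ranges over all such curves with $f(0)=x_0$, the set of resulting $g(0)$ is finite; call it $\{z_1,\dotsc,z_m\}$.

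It remains to upgrade finiteness to a single value, i.e.\ to rule out that $q(p\inv(x_0))$ is a positive-dimensional (or just disconnected with more than one point) subset of $Y$. For this I would argue that $q(p\inv(x_0))$ is closed, connected, and contained in the finite set $\{z_1,\dotsc,z_m\}$, hence a point. Closedness is clear since $p$ is proper and $q$ continuous, so $q(p\inv(x_0))$ is a closed subset of $Y$. Connectedness follows because $p\inv(x_0)$ is connected: $X$ is normal and $p$ is a proper birational morphism from a variety, so by Zariski's main theorem (in the form: $p_*\cO_W = \cO_X$) all fibers of $p$ are connected. Finally, every point of $q(p\inv(x_0))$ is of the form $g(0)$ for some regular curve $f\colon C\to X$ with $f(0)=x_0$ meeting $U$ — this is exactly the curve-through-a-point-of-$W$ argument of the previous paragraph, noting that $p\inv(U)$ is dense in $W$ so such curves pass through any given point of $p\inv(x_0)$ — hence $q(p\inv(x_0))\subset\{z_1,\dotsc,z_m\}$. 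A connected closed subset of a finite set is a single point, so $q$ is constant on $p\inv(x_0)$, and we conclude.

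The main obstacle is the step asserting that every point of $p\inv(x_0)$ lies on a curve meeting $p\inv(U)$, i.e.\ that one can produce the requisite test curves $f\colon C\to X$ realizing an arbitrary prescribed limit point; this needs that $p\inv(U)$ is dense in the (possibly reducible, possibly badly singular) variety $W$ and that $W$ is connected through $x_0$, which is precisely the connectedness of $p\inv(x_0)$ plus density of the graph — both fine, but worth stating carefully. A clean alternative avoiding explicit curves is: once we know $q(p\inv(x_0))$ is closed and connected, apply the rigidity lemma to $p\colon W\to X$ and $q\colon W\to Y$ — but the rigidity lemma in its usual form wants properness and geometrically connected fibers of $p$ together with \emph{one} fiber being contracted; the finiteness hypothesis is exactly what bootstraps ``finite, hence (by connectedness) a point, hence contracted'' for the special fiber, after which rigidity propagates contraction to a neighborhood and normality of $X$ finishes. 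Either route works; I would present the direct $q_*$-descent version since it is the most self-contained.
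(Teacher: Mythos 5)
Your proof is correct and takes essentially the same route as the paper's: both pass to the closure $Z$ of the graph in $X\times Y$, use the finiteness hypothesis (realized via curves through points of the fibers of $Z\to X$ meeting the graph over $U$) to control those fibers, and conclude with Zariski's Main Theorem applied to the proper birational map $Z\to X$ onto the normal variety $X$. The only difference is that your connectedness-of-fibers and descent steps are redundant: since $Z\subset X\times Y$, the fiber $p^{-1}(x)$ injects into $Y$ via $q$, so its finiteness already makes $p$ quasi-finite, hence finite, hence an isomorphism by ZMT, with no need for the Zariski connectedness theorem or the rigidity lemma.
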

\begin{proof}
  Let $Z\subset X\times Y$ be the closure of the graph of $U\to
  Y$. The projection $Z\to Y$ is a morphism extending
  $\varphi$. The morphism $Z\to X$ is birational, and the condition is
  that it is finite. By Zariski's Main Theorem $Z\to X$ is an
  isomorphism.
\end{proof}

\section{The Coxeter fan and compactifications of $F_2$}
\label{sec:reflection-fan}

\subsection{The Coxeter fan}

For $F_2$, a toroidal compactification depends on a single fan,
supported on the rational closure $\opC$ of the positive cone in
the space $N_\bR$ for the hyperbolic lattice $N=H\oplus E_8^2\oplus
A_1$. We now describe a particularly nice fan on $N$,
cf. \cite[6.2]{scattone1987on-the-compactification-of-moduli}. 

\begin{definition}
  The Coxeter fan $\fF^\cox$ is obtained by cutting $\opC$ by the mirrors
  $r^\perp$ to the roots of $N$, i.e. the vectors $r\in N$ with
  $r^2=-2$. 
\end{definition}

The Weyl group $W(N)$ generated by reflections in the roots has finite
index in the isometry group $O^+(N)$, with the quotient
$O^+(N)/W(N) = S_3$. Here $O^+(N)$ is the index 2 subgroup of $O(N)$
fixing the positive cone. Reflection groups acting on hyperbolic
spaces we studied by Vinberg, see e.g.
\cite{vinberg1985hyperbolic-groups, vinberg1973some-arithmetic}. Note
that in those papers a hyperbolic space has signature $(r-1,1)$ vs. our
$(1,18)$.

A fundamental chamber $\fund$ of $W(N)$ is described by a
Coxeter diagram given in Fig.~\ref{fig:vinberg}. The nodes represent
$24$ roots $r_i$ that generate $N$, with the index $i$ given by the label
in Fig.~\ref{fig:vinberg}.  We have $(r_i,r_j)=0,1,2,6$ depending
on whether there is no line, a single line, a doubled line, or a dashed line
connecting $i$ to $j$, respectively. The fundamental chamber is
$$\fund =\{\lambda\in \opC\,:\, \lambda\cdot r_i\geq 0\textrm{ for }0\leq i\leq
23\}.$$ The group $S_3$ acts on the fundamental chamber by symmetries
of the diagram. The projectivization $P = \bP(\fund)$ is a hyperbolic polytope with
cusps: it has infinite vertices corresponding to null rays $v\in \fund$ with
$v^2=0$. However, it has finite hyperbolic volume.

\begin{figure}[!htp]
  \begin{center}
    \begin{minipage}{175pt}
      \includegraphics[width=1.0\linewidth]{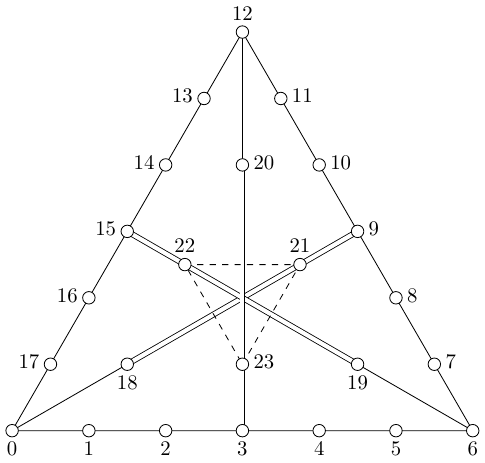}
    \end{minipage}
    \begin{minipage}{175pt}
      \begin{minipage}{.45\linewidth}
        \includegraphics[width=\linewidth]{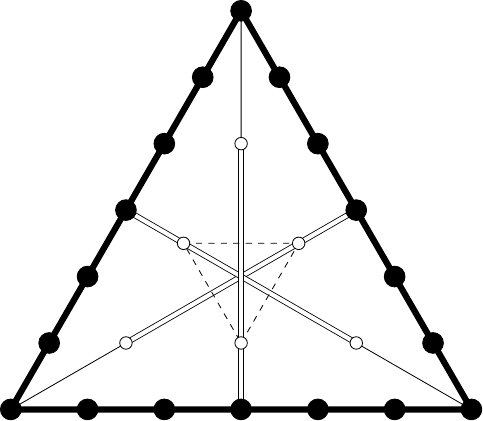}
        \vskip 10pt
        \includegraphics[width=\linewidth]{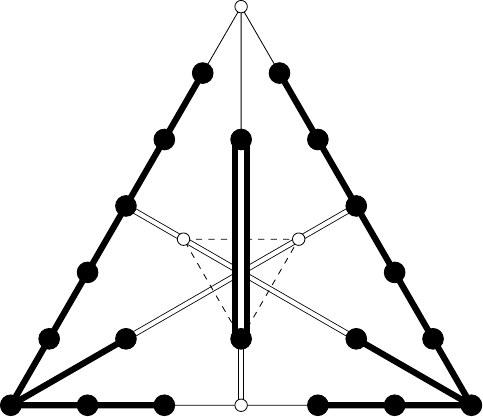}
      \end{minipage}
      \begin{minipage}{.45\linewidth}
        \includegraphics[width=\linewidth]{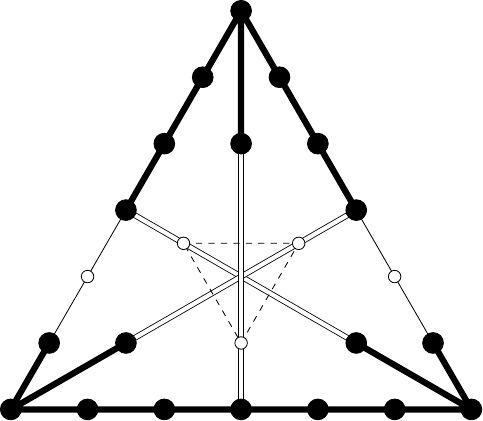}
        \vskip 10pt
        \includegraphics[width=\linewidth]{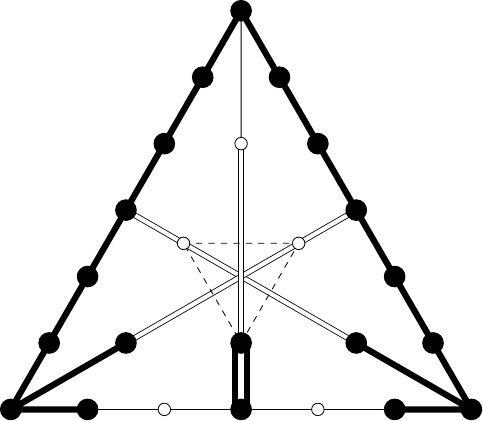}
      \end{minipage}
    \end{minipage}
  \end{center}
  \caption{Coxeter diagram $G_\vin$ and its maximal parabolic subdiagrams
  $\wA_{17}$, $\wD_{10}\wE_7$, $\wE_8^2\wA_1$, $\wD_{16}\wA_1$}
  \label{fig:vinberg}
\end{figure}

\begin{definition}
  A \emph{subdiagram of $G_\vin$} is a subgraph $G\subset G_\vin$
  induced by a subset $V\subset V(G_\vin)$ of the vertices, i.e. a
  subset of the 24 roots $r_i$. It defines a vector subspace
  $\bR V = \la r_i,\ i\in V\ra \subset N_\bR$.

  A subdiagram is called \emph{elliptic} if the restriction of the
  quadratic form of $N$ to $\bR V$ is negative definite. It is called 
  \emph{parabolic} if it negative semi-definite. \emph{Maximal
    parabolic} means maximal by inclusion among the parabolic diagrams.
\end{definition}

Vinberg described the faces of the fundamental polytope $P$, see
\cite[Thm.3.3]{vinberg1973some-arithmetic}. In our situation this gives:

\begin{theorem}\label{thm:faces-fund-chamber}
  The correspondence
  \begin{displaymath}
    F \mapsto G(F) = \{i \mid F\subset r_i^\perp\},
    \quad
    G = \{r_i,\ i\in G\}\mapsto F(G)= \cap_{i\in I} r_i^\perp
  \end{displaymath}
  defines an order reversing bijection between the faces of the
  fundamental chamber $\fund$ and the elliptic and maximal parabolic
  subdiagrams $G\subset G_\vin$. The chamber itself corresponds to
  $G=\emptyset$.

  Type III cones (meeting the interior $\pC$) of dimension $d>0$
  correspond to elliptic subdiagrams of rank $r=19-d$. These
  are disjoint unions of Dynkin diagrams $G_i$ of ADE type with
  $\sum |G_i| = r$.

  Type II rays $\bR_{\ge0}v$ with $v^2=0$ 
  correspond to maximal parabolic subdiagrams of $G_\vin$. These are
  disjoint unions of affine Dynkin diagrams $\wG_i$ with
  $\sum |G_i| = 17$.
\end{theorem}

\begin{lemma}\label{lem:cones-fcox}
  The cones of the Coxeter fan $\fF^\cox$ mod $W(N)$ are in a
  bijection with the faces of the fundamental chamber.  The cones of
  $\fF^\cox$ mod~$O^+(N)$ are in a bijection with elliptic and maximal
  parabolic subdiagrams of $G_\vin$ mod~$S_3$.
\end{lemma}
\begin{proof}
  This follows since $\fund$ is a fundamental domain for the
  $W(N)$-action and $O^+(N) = S_3\ltimes W(N)$.
\end{proof}

The following two lemmas are proved by direct
enumeration.

\begin{lemma}\label{lem:parabolic-subdiagrams}
  Mod $S_3$, there are $4$ maximal parabolic subdiagrams of $G_\vin$,
  illustrated on the right in Fig.~\ref{fig:vinberg}.
  \begin{enumerate}
  \item $\wA_{17} = [ i,\ 0\le i<18]$,
  \item $\wD_{10}\wE_7 = [ 18,{17},0 , \dotsc,  6,7,19] \sqcup
    [ 9 , \dotsc,  {15},20]$,
  \item $\wE_8^2\wA_1 = [ {13}, \dotsc, 2,18] \sqcup
    [ 4 , \dotsc,  11, 19] \sqcup [ 20,23]$.
  \item $\wD_{16}\wA_1 = [ 19,5,6 , \dotsc,  0,1,18] \sqcup
    [ 3,23]$. 
  \end{enumerate}
\end{lemma}

\begin{lemma}\label{lem:numbers-strata-Ftor}
  Mod $S_3$, the numbers of elliptic subdiagrams of $G_\vin$
  that have ranks $r=1,\dotsc, 18$ are
  $6$, $51$, $328$, $1518$, $5406$, $14979$, $33132$, $59339$,
  $87077$, $105236$, $105078$,
  $86505$, $58223$, $31564$, $13371$, $4209$, $883$, $99$.
  In particular, in $\fF^\cox$ mod $O^+(N)$ there are $4 + 99 = 103$ rays.
\end{lemma}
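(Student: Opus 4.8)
The plan is a finite, computer-assisted enumeration; the irregularity of the numbers (e.g.\ $105236$, $105078$) makes clear that no closed-form argument is expected, and the proof is really a careful bookkeeping of the combinatorics of $G_\vin$. First I would transcribe the Coxeter diagram of Figure~\ref{fig:vinberg} into the $24\times24$ Gram matrix $G=\big((r_i,r_j)\big)_{0\le i,j\le 23}$: the diagonal entries are $r_i^2=-2$, an absent edge gives $(r_i,r_j)=0$, a single edge gives $1$, a double edge gives $2$, and a dashed edge gives $6$. A subset $V\subseteq\{0,\dots,23\}$ is an elliptic subdiagram exactly when the principal submatrix $G_V$ is negative definite, and this property is inherited by subsets, so the elliptic subdiagrams form an abstract simplicial complex (a down-set). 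Hence the full list is produced by a depth-first search that starts from $\emptyset$, adjoins vertices one at a time in increasing index order, and prunes a branch as soon as negative-definiteness fails---tested exactly over $\bQ$ via the signs of leading principal minors; alternatively one simply runs over all $\le 2^{24}$ subsets. Each elliptic $V$ is recorded together with its rank $\dim_\bR\bR V=|V|$.

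Next I would put in the $S_3=O^+(N)/W(N)$ action. Since $S_3$ acts on the fundamental chamber $\fund$ by symmetries, it permutes the facets $r_i^\perp$, giving a faithful homomorphism $S_3\hookrightarrow\mathrm{Sym}\{0,\dots,23\}$ which is read off from the symmetries of the picture in Figure~\ref{fig:vinberg} (the corners $0,6,12$ are cyclically permuted, and so on). Acting with these $6$ permutations on the list of elliptic subdiagrams and extracting one representative per orbit, I would tally the number of orbits of each rank $r=1,\dots,18$; this is asserted to yield the sequence $6,51,328,\dots,883,99$. As a consistency check one can instead count all elliptic subdiagrams of each rank and then apply Burnside's lemma, but direct orbit enumeration is cleanest.

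For the ``in particular'' statement: by Theorem~\ref{thm:faces-fund-chamber} together with Lemma~\ref{lem:cones-fcox}, the rays of $\fF^\cox$ modulo $O^+(N)$ are of exactly two kinds---the Type~III rays, which correspond to rank-$18$ elliptic subdiagrams of $G_\vin$ modulo $S_3$, of which the enumeration above gives $99$; and the Type~II rays, which correspond to maximal parabolic subdiagrams modulo $S_3$, of which there are $4$ by Lemma~\ref{lem:parabolic-subdiagrams}. Hence there are $4+99=103$ rays.

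The computation itself is routine, so the step I would scrutinize most is the one source of error: the faithful transcription of $G_\vin$ (the exact edge conventions---in particular the dashed edges contributing $6$---and the precise adjacencies drawn in Figure~\ref{fig:vinberg}) and of the induced $S_3$-permutation of the $24$ roots. Several independent checks guard against mistakes: the maximum rank of an elliptic subdiagram is $18$, since a negative-definite subspace of the signature-$(1,18)$ lattice $N$ has dimension at most $18$, which matches the range of the list; the rank-$17$ entry together with the four maximal parabolic diagrams should be compatible with the description of the four $1$-cusps in Section~\ref{sec:baily-borel}; and each count can be re-derived by the alternative route of first enumerating the connected ADE subdiagrams of $G_\vin$ and then counting pairwise-disjoint, pairwise-non-adjacent collections among them, with the $S_3$-orbits extracted at the end.
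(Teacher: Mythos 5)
Your proposal is correct and is exactly the paper's approach: the paper disposes of this lemma with the single remark that it is ``proved by direct enumeration,'' and your DFS over negative-definite principal submatrices of the $24\times24$ Gram matrix, followed by taking $S_3$-orbits and reading off the ray count from Theorem~\ref{thm:faces-fund-chamber}, Lemma~\ref{lem:cones-fcox}, and Lemma~\ref{lem:parabolic-subdiagrams}, is just that enumeration spelled out carefully. Your identified failure point (faithful transcription of the diagram and of the $S_3$-permutation) and your consistency checks are sensible.
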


For each of the extended Dynkin diagrams $\wA_k$, $\wD_k$, $\wE_k$,
there is a unique primitive positive integer combination of the roots
which is null in the affine root lattice.
The coefficients for the first $k$ nodes are the fundamental weights
of the corresponding Lie algebra and the coefficient of the extended
node is $1$. Alternatively, these are labels of the extended Dynkin
diagram such that each label is half the sum of its neighbors. For
example, for the first $\wE_8$ diagram in case (3) above this vector
is
\begin{displaymath}
  n\big(\wE_8^{(1)}\big) = r_{13}+2r_{14}+3r_{15}+4r_{16}+5r_{17}+6r_{0} +
  4r_1+2r_2+3r_{18}.
\end{displaymath}

\begin{lemma}\label{lem:N-linear-rels}
  For each maximal parabolic subdiagram of $G_\vin$ the square-zero
  vectors of its connected components coincide:
  \begin{displaymath}
    n\big(\wD_{10}\big) = n\big(\wE_7\big), \quad
    n\big(\wE_8^{(1)}\big)= n\big(\wE_8^{(2)}\big)= n\big(\wA_1\big), \quad
    n\big(\wD_{16}\big) = n\big(\wA_1\big).
  \end{displaymath}
  The six $\wE_8^2\wA_1$ equations generate all the relations between the
  $24$ roots $r_i$. The unique syzygy between them is that the sum of
  the three $\wE_8^2$ differences is zero.
\end{lemma}
\begin{proof}
  An easy direct check.
\end{proof}

\subsection{Connected Dynkin subdiagrams of $G_\vin$}
\label{subsec:connected-subdiags}

We adopt the notation of \cite{alexeev17ade-surfaces} for the
connected subdiagrams of $G_\vin$ using decorated Dynkin diagrams.

\begin{definition}\label{def:irr-subdiagram}
  The subdiagrams of $G_\vin$ with the vertices entirely contained in
  the subset $\{18, 19, 20, 21, 22, 23\}$ are called \emph{irrelevant}. A
  diagram is \emph{relevant} if it has no irrelevant connected
  components. For each $G\subset G_\vin$ its \emph{relevant content}
  $G^\rel$ is the subdiagram obtained by dropping all irrelevant
  connected components.
\end{definition}

We list the connected subdiagrams of $G_\vin$  in
Table~\ref{tab:vin-conn-subdiags}.  The indices $0\le i<18$ are taken
in $\bZ_{18}$. We first give the elliptic subdiagrams, then parabolic,
then irrelevant elliptic and finally irrelevant parabolic.  The
diagrams are considered up to the $S_3$-symmetry if they do not lie in
the outside $18$-cycle. The ones that are contained in the $18$-cycle
are considered up to the dihedral symmetry group $D_9$.

\begin{table}[h!]
  \centering
  \begin{tabular}[h!]{lll|lll}
    Type &Vertices & &Type &Vertices  \\
    \hline
    $A_{2n+1}$ & $ {2i+1},\dotsc, {2i+2n+1}, $ & $n\leq 8$ &
          $\wA_{17}$ & $ i,\ 0\le i<18$  \Tstrut \\
    $A_{2n}^-$ & $ {2i+1},\dotsc, {2i+2n} ,$ & $n\leq 8$  &
          $\wD_{10}$ & $ 18,17,0 , \dotsc,  6,7,19$ \\
    $\mA_{2n+1}^-$ & $ {2i},\dotsc, {2i+2n} ,$ & $n\leq 8$ &
          $\wE_7$ & $ 9 , \dotsc,  {15},20$  \\
    $\pA_{2n+1}$ & $ 18, 0,1,\dotsc, {2n-1} ,$ & $n\leq 8$ &
          $\wE_8^-$ & $ {13}, \dotsc, 2,18$  \\
    $\pA_{2n}^-$ & $ 18, 0,1,\dotsc, {2n-2}, $ & $n\leq 8$ &
          $\wD_{16}$ & $ 19,5,6 , \dotsc,  0,1,18$  \\
    $\pA_9'$ & $ 18, 0, \dotsc, 6, 19 $ &  &
          $\wA_1^*$ & $  3,23$ \\
    $\pA_{15}'$ & $ 18, 0, \dotsc, {12}, 20 $ &  \\

    $D_{2n}$ & $ 18, 17, 0,1,\dotsc, {2n-3}, $ & $n\leq 8$ &
         $A_1^\irr$ & $18$ \\
    $D_{2n+1}^-$ & $18, 17, 0,1\dotsc, {2n-2}, $ & $n\leq 8$ &
         $\mA_1^{-\,\irr}$ & $21$ \\
    $D'_{10}$ & $ 18, 17, 0,\dotsc, 6, 19 $ &  \\

    $D'_{16}$ & $ 18, 17, 0,\dotsc, {12}, 20 $ &   &
          $\wA_1^\irr$ & $  20,23$ \\
    $\phmi E_6^-$ & $ 18, {16}, 17, 0, 1, 2$ & \\

    $\phmi E_7$ & $ 18, {16}, 17, 0, 1, 2, 3$ &  \\

    $\phmi E_8^-$ & $ 18, {16}, 17, 0, 1, 2, 3, 4$ &  \\
  \end{tabular}
  \smallskip
  \caption{Connected elliptic and parabolic subdiagrams of $G_\vin$}
  \label{tab:vin-conn-subdiags}
\end{table}

The parabolic subdiagrams of $G_\vin$ are shown in
Fig.~\ref{fig:vinberg}. 

\begin{definition}
The {\it skeleton} of a diagram its intersection with the cycle $0,1,\dotsc, 17$. \end{definition}

In the shortcut notation of Table \ref{tab:vin-conn-subdiags}, a minus or prime on the left (resp. right)
implies that the clockwise (resp. counterclockwise) vertex adjacent to the skeleton is odd.
The absence of a marking implies the vertex is even. The prime indicates
that an extra leaf of the subdiagram has entered the interior vertices 
$\{18,19,20,21,22,23\}$ of Fig.~\ref{fig:vinberg}. 

\begin{definition}\label{def:stable-type} The {\it stable type} of an elliptic or maximal parabolic subdiagram
$G=\sqcup \, G_k\subset G_\cox$ is its relevant content $G^\rel$,
with diagrams notated as in Table \ref{tab:vin-conn-subdiags},
listed in cyclic order around the $18$-cycle. We introduce symbols
$A_0^-$ or $\mA_0$ to indicate, respectively, that both $2i, 2i+1$ or both $2i+1,2i+2$ (mod $18$)
do not lie in the skeleton of $G$.
   \end{definition}
   
   The insertion of the symbols $A_0^-$ or $\mA_0$ is necessary to determine the spacing
   between the relevant connected components. Two examples are shown in Fig.~\ref{fig:diagrams}.
   Note that the $S_3$ or $D_9$ action cyclically rotates and/or flips the diagram labels in the stable
   type, and orientation reversing symmetries flip which sides of a symbol
   are decorated with a $-$ sign.

\begin{figure}[!htp]
  \begin{center}
    \begin{minipage}{175pt}
      \centering
      \includegraphics[width=\linewidth]{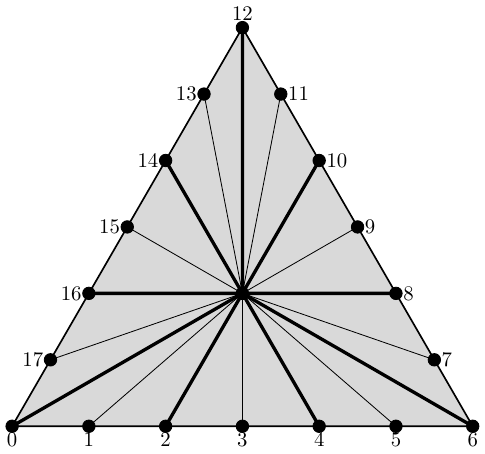}
    \end{minipage}
    \begin{minipage}{175pt}
      \centering
      \includegraphics[width=\linewidth]{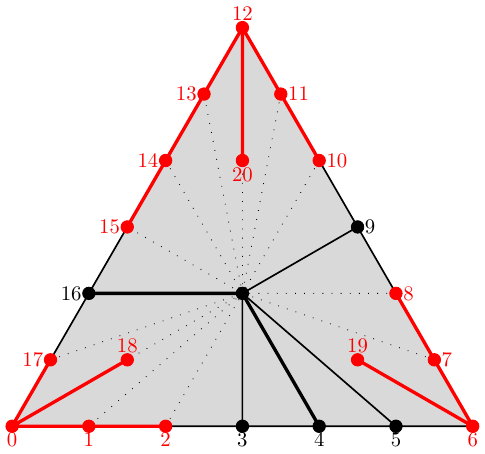} 
    \end{minipage}
    \caption{Stable types $(A_0^-\mA_0)^9$ and  $D_5^- \mA_0 A_0^- \pA_4^- \phmi E_7$.}
    \label{fig:diagrams}
  \end{center}
\end{figure}

\subsection{A toroidal compactification}
\label{sec:toroidal-compn}

\begin{definition}
  The toroidal compactification $\oF_2^\tor=\oF_2^{\fF_\cox}$ we consider in this
  paper is the one corresponding to the Coxeter fan $\fF^\cox$.
\end{definition}

We describe the strata of $\oF_2^\tor$ which by
\eqref{thm:faces-fund-chamber}, \eqref{lem:cones-fcox} correspond to
elliptic and maximal parabolic subdiagrams of $G_\vin$ mod $S_3$.

\begin{notation}
  An elliptic subdiagram $G=\sqcup\, G_k$ is a union of ADE Dynkin
  diagrams. We denote by $R_G$ the corresponding root system and
  $W(G)$ its Weyl group. Let $S_G \ltimes W(G)\subset O(R_G)$ be the
  extension by the symmetries $S_G\subset S_3$ of the subdiagram.
  A parabolic subdiagram $\wG = \sqcup\, \wG_k$ is a union of affine ADE
  Dynkin diagrams. In this case, let $G=\sqcup\, G_k$ be the
  union of the corresponding ordinary (not extended) Dynkin diagrams.
\end{notation}

\begin{proposition}\label{prop:strata-in-F2tor}
  The type III and II strata in $\oF_2^\tor$ are as follows:
  \begin{enumerate}
  \item For an elliptic diagram $G$, $\Str(G)$ is the quotient by $S_G\ltimes W(G)$
  of the torus $\Hom(M_G, \bC^*)$ where $M_G$ is the saturation
  of the root torus $R_G$ in $M=N^*$.
  \item For a maximal parabolic diagram $\wG$, $\Str(\wG)$ is
  the quotient by $S_G\ltimes W(G)$ of $\Hom(M_G, \cE) \simeq \cE^{17}$,
    where $\cE^{17}\to \bA^1_j$ is the
    self fiber product of the universal family of elliptic curves
    $\cE\to\cM_{1,1}$ over the moduli stack.
  \end{enumerate}
\end{proposition}
\begin{proof}
  The strata of Type III are contained in the fiber of
  $\oF_2^\tor\to\oF\ubb$ over the unique Type III point and can be
  described purely in terms of toric geometry.  We have two lattices
  $N = I^\perp/I = H\oplus E_8^2\oplus A_1$ and $M =
  N^*$. Using the quadratic form on $N$, we can present $N^*$ as an
  overlattice with $N^*/N=\bZ_2$. The lattice $N$ is generated by the
  24 roots $r_i$ in Coxeter diagram. Thus,
\begin{displaymath}
  M= N^* = \big\{ v\in \textstyle{\frac12} N  \mid (v,r_i)\in \bZ
  \big\} = N + \frac12 r_{21}.
\end{displaymath}
For each Type III cone $\sigma=\sigma(G)$ of $\fF^\cox$, we have a
cone $\sigma\subset N_\bR$ and a toric variety $U_\sigma$ with a
unique closed orbit $O_\sigma$, which is a torus itself.  It is
standard in toric geometry that
$O_\sigma = \Hom(\sigma^\perp\cap M, \bC^*)$, and we have
$\sigma^\perp = M_G=R_G^\sat$, the saturation of the root lattice
$R_G$ in $M$. In the toroidal compactification we divide an infinite
toric variety by $\Gamma = O^+(N)$. The orbit $T_G$ is divided by
its stabilizer in $O^+(N)$, which is $S_G\ltimes W(G)$. The description in Type
III follows.

The exact structure of a
Type II boundary divisor is determined by
the parabolic group ${\rm Stab}_\Gamma(J)$
stabilizing the corresponding rank $2$ isotropic lattice $J$.
This parabolic group acts on the period domain
$\mathbb{H}\times \C^{17}$ of Type II mixed Hodge structures,
and the quotient is the boundary divisor. The unipotent
subgroup $U_J$ is the kernel of the map
${\rm Stab}_\Gamma(J)\xrightarrow{q} \SL(J)\times O(J^\perp/J)$ 
and induces the full group of translations
$J^\perp/J\otimes (\Z\oplus \Z\tau)\simeq (\Z\oplus \Z\tau)^{17}$
on the second factor $\C^{17}$. Quotienting by $U_J$ first
gives $J^\perp/J\otimes (\C/\Z\oplus \Z\tau)\rightarrow \mathbb{H}$ 
on which the image of $q$ further acts.

We claim that $q$ is surjective. First we show that for any isotropic $J$,
there is a complementary isotropic subspace $J'$,
i.e. a lift of $h^\perp/J^\perp$ to an isotropic plane in $h^\perp$
such that the pairing between $J$ and $J'$ realizes $J'={\rm Hom}(J,\Z)$.
For instance, let $e_1,e_2$ be a basis of $J$. There is an isotropic $f_1$ such
that $e_1\cdot f_1=1$. Taking the perpendicular of $\{e_1,f_1\}$ we get a sublattice 
of $h^\perp$ isometric to $N$ because there is a unique
$0$-cusp. We claim that there is an isotropic
$f_2\in N$ such that $e_2\cdot f_2=1$. Observe that $e_2$ is primitive
in $N^*$---it is primitive in $N$ and is not of the form
$r_{21}+2n$ for any $n\in N$
because the norm of any such element is nonzero.
Hence there is an $f_2$ such that $e_2\cdot f_2=1$.
Since $N$ is even, we can modify $f_2$ by a multiple of $e_2$ to ensure
it too is isotropic. Then we choose $J'=\{f_1,f_2\}$.

We can now realize any element $(\gamma,g)\in \GL(J)\times O(J^\perp/J)$
by an isometry of $h^\perp$: We declare the action on $J$ to be
$\gamma$, on $J'={\rm Hom}(J,\Z)$ to be the transpose action $\gamma^T$, and
the action on the lattice summand $(J\oplus J')^\perp\simeq J^\perp/J$ to
be $g$. Thus, the type II boundary divisor is the quotient of $\cE^{17}
\rightarrow \mathbb{H}$ by all of $\SL_2(\Z)\times O(J^\perp/J)$---we only get
$\SL_2(\Z)$ because the isometry must have spinor norm $1$.  \end{proof}

\begin{lemma}\label{lem:orbs-conn-ell}
  For the connected elliptic subdiagrams $G$ one has
  $M_G=R_G$ except for the following diagrams given up
  to $S_3$, where the quotient $M_G/R_G$ is
  \begin{enumerate}
  \item $\bZ_2$ for $A_1^\irr=[ 23]$;
    \quad $\pA_9'$, $\pA_{15}'$, $D_{10}'$, $D_{16}'$; 
    \newline $A_{17}=[ 3, \dotsc, {1}]$, $\mA_{17}^-=[ 4, \dotsc, {2}]$,
    and $D_{18} = [ 18, {17}, 0, \dotsc, {15}]$.
  \item $\bZ_6$ for $A_{17}=[ 1, \dotsc, {17}]$.
  \end{enumerate}
\end{lemma}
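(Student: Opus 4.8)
The plan is to reduce the statement to a finite, completely explicit computation inside the lattice $N = H \oplus E_8^2 \oplus A_1$ and its dual $M = N^*$. Recall from the setup that $M = N + \tfrac12 r_{21}$, so $M/N \cong \bZ_2$, generated by the image of $\tfrac12 r_{21}$. For a connected elliptic subdiagram $G$ with vertex set $V \subset \{0,\dotsc,23\}$, the relevant quotient is $M_G/R_G$, where $R_G = \langle r_i : i \in V\rangle$ is the root lattice spanned inside $N$, and $M_G = R_G^{\mathrm{sat}}$ is its saturation in $M$, i.e. $M_G = (\bR V \cap M)$. Since $R_G$ is an honest ADE root lattice, the quotient $M_G/R_G$ injects into the discriminant group $A_{R_G} = R_G^*/R_G$, whose structure is standard: $A_{A_n} \cong \bZ_{n+1}$, $A_{D_n} \cong \bZ_4$ or $\bZ_2^2$, $A_{E_6}\cong\bZ_3$, $A_{E_7}\cong\bZ_2$, $A_{E_8}=0$. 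Thus for each connected $G$ the possible values of $M_G/R_G$ form a short list, and the task is to decide, for each $G$, which subgroup actually occurs.

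First I would dispose of the generic case. An element of $M_G \setminus R_G$ is, up to adding elements of $R_G$, of the form $w \in \tfrac12 R_G^*$ lying in $\bR V$, and—because $M = N + \tfrac12 r_{21}$—any element of $M$ not in $N$ has the form $n + \tfrac12 r_{21}$ with $n \in N$. So $M_G \neq R_G$ forces $M_G$ to contain either (a) a nontrivial element of $R_G^*/R_G$ already visible inside $N$ (this happens precisely when $R_G$ is \emph{not} saturated in $N$, i.e. $N/R_G$ has torsion supported on $\bR V$), or (b) an element involving $\tfrac12 r_{21}$, which requires $r_{21} \in \bR V$, i.e. $21 \in V$ or $r_{21}$ lies in the span of the $r_i$, $i \in V$. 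This dichotomy organizes the whole computation: case (b) is responsible for $A_1^\irr = [23]$ (note $r_{21}$ and $r_{23}$: since $23$ is irrelevant and $\{21,23\}$ forms $\wA_1$, one has $r_{21} \in \bR\langle r_{23}\rangle$ up to the null direction—more precisely $\tfrac12 r_{21}$ is congruent mod $N$ to a half-integral vector in $\bR r_{23}$), giving the $\bZ_2$; case (a) accounts for the remaining diagrams, where one checks directly that the listed $A$- and $D$-type subdiagrams fail to be saturated in $N$.

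The concrete mechanism for case (a): for a connected subdiagram $G$ sitting on the outside $18$-cycle together with interior leaves, use the explicit null vectors $n(\wG_i)$ from Lemma~\ref{lem:N-linear-rels} and the relations among the $24$ roots. For instance, when $G = A_{17} = [1,\dotsc,17]$ is the complement of a single vertex in the $\wA_{17}$ cycle $[i : 0 \le i < 18]$, the null relation $\sum_{i=0}^{17} r_i = 0$ (the defining relation of $\wA_{17}$, using $n(\wA_{17})$ has all coefficients $1$) shows $r_0 = -\sum_{i=1}^{17} r_i \in R_G$, but more importantly the fundamental-weight coefficients of $\wA_{17}$ produce a vector $w = \tfrac{1}{18}\sum_{i=1}^{17} i\, r_i$ that lands in $M$ (one verifies $(w, r_j) \in \bZ$ for all $24$ roots $r_j$ using the dashed/double-edge data of $G_\vin$) and generates $M_G/R_G \cong \bZ_{18}$—wait, the claim says $\bZ_6$, so in fact only the index-$6$ part survives after intersecting with $M = N + \tfrac12 r_{21}$, because the corner structure of $G_\vin$ (the three corners $0,6,12$ and the dashed edges with $(r_i,r_j)=6$) imposes the integrality condition $(w, r_{\mathrm{corner}}) \in \bZ$ only after multiplying by $3$, cutting $\bZ_{18}$ down to $\bZ_6$. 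For the other $A_{17}$-type and $\mA_{17}^-$-type diagrams (complements of an odd resp. even vertex), the arithmetic of which corner is omitted changes the gcd, yielding only $\bZ_2$. For $D_{18} = [18,17,0,\dotsc,15]$, $D_{10}'$, $D_{16}'$ one similarly writes the half-spinor or vector weight of the $D$-type lattice and checks it lies in $M$ modulo $R_G$, giving $\bZ_2$; for $\pA_9', \pA_{15}'$ the prime-decoration (meaning the adjacent skeleton vertex is odd, so the leaf attaches at a $\wD_{10}$ or $\wD_{16}$ spinor node) forces exactly a $\bZ_2$.

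\textbf{Main obstacle.} The genuine difficulty is the bookkeeping: one must correctly identify, for each of the finitely many connected elliptic subdiagram \emph{types}, the span $\bR V$ and test membership of candidate half-integral weight vectors in $M = N + \tfrac12 r_{21}$, which in turn requires knowing the pairings $(r_i, r_j)$ for \emph{all} pairs, including the non-adjacent ones recorded by the dashed edges ($(r_i,r_j)=6$) and double edges ($(r_i,r_j)=2$) in Fig.~\ref{fig:vinberg}. The subtle point—and the reason the answer is $\bZ_6$ rather than $\bZ_{18}$ for the special $A_{17}$, and $\bZ_2$ rather than something larger elsewhere—is that the discriminant subgroup realized inside $N$ (or $M$) is cut down from the full $A_{R_G}$ by exactly these long-range integrality constraints at the corners $0,6,12$. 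I would handle this by tabulating, once and for all, the Gram data of $G_\vin$, then running the saturation computation uniformly; the $S_3$- and $D_9$-symmetry of Table~\ref{tab:vin-conn-subdiags} reduces the number of genuinely distinct checks to a manageable list, and the assertion is then an "easy direct check" in the same spirit as Lemma~\ref{lem:N-linear-rels}, just more laborious.
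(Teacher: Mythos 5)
Your core method is the paper's: $M_G/R_G$ embeds in the discriminant group $R_G^*/R_G$, and one decides which coset representatives survive by testing integrality of their pairings against all $24$ roots $r_i$ (the criterion for membership in $M=N^*$, since the $r_i$ generate $N$). The paper's proof is exactly that finite check together with the standard list $A_n^*/A_n=\bZ_{n+1}$, $D_n^*/D_n=\bZ_2^2$ or $\bZ_4$, $E_n^*/E_n=\bZ_{9-n}$; and your $\bZ_6$-versus-$\bZ_{18}$ discussion for $A_{17}=[1,\dotsc,17]$ matches the paper's example, where the surviving generator is $\tfrac16\sum_{i=1}^{17} i\,r_i=3\varpi$.

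However, the organizing dichotomy you place on top of this contains a wrong step. If $u\in M_G\setminus N$, writing $u=n+\tfrac12 r_{21}$ with $n\in N$ gives only $r_{21}=2u-2n\in(\bR V\cap N)+2N$; it does \emph{not} follow that $r_{21}\in\bR V$. The case $A_1^\irr=[23]$ itself refutes your criterion: there $\tfrac12 r_{23}\in M$ (equivalently $r_{23}\equiv r_{21}\bmod 2N$) even though $r_{21}\notin\bR r_{23}$. Your attempted repair --- that ``$\{21,23\}$ forms $\wA_1$'' --- misreads the diagram: by Lemma~\ref{lem:parabolic-subdiagrams} and Table~\ref{tab:vin-conn-subdiags} the $\wA_1$ subdiagrams containing vertex $23$ are $[3,23]$ and $[20,23]$, while $21$ pairs with other vertices under the $S_3$-symmetry. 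Had you actually used the case-(b) filter ``$r_{21}\in\bR V$'' to decide which diagrams to test for the extra $\bZ_2$, you would have missed $A_1^\irr$. Since you ultimately fall back on the uniform direct check against the Gram data of $G_\vin$ --- which is the paper's actual argument and does produce the stated list --- the proof is recoverable, but you should either drop the dichotomy or replace the case-(b) criterion by $r_{21}\in(\bR V\cap N)+2N$.
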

\begin{proof}
  For a vector $u\in M_\bQ$, one has $u\in M \iff (u, v)\in\bZ$ for
  all $v\in N$, i.e. iff $(u,r_i)\in\bZ$ for the 24 roots $r_i$. Now,
  for each of the lattices $\Lambda = R_G$ we check the finitely
  many vectors in $\Lambda^*/\Lambda$ and see for which of them all
  the intersection numbers with the 24 roots $r_i$ are integral. As
  usual, $A_n^*/A_n = \bZ_{n+1}$, $D_n^*/D_n = \bZ_2^2$ or $\bZ_4$
  for $n$ even or odd, and $E_n^*/E_n = \bZ_{9-n}$.
\end{proof}

\begin{example}
  For the lattice $\pA_9'$ the vector
  $u = \frac12(r_{18}+r_1+r_3+r_5+r_{19})\in R_G\otimes\bQ$ in
  fact lies in $M$ because $(u,r_j)\in\bZ$ for all roots $r_j$.  Note
  that $u \equiv \varpi_5 \mod A_9$, the fundamental weight of the $A_9$
  lattice for the middle node.
  Similarly for the $A_{17}$ diagram in (2), the vector
  $u = \frac16 \sum_{i=1}^{17} ir_i$ is in $(R_G\otimes\bQ) \cap M$.
\end{example}

\subsection{Generalized Coxeter semifan}
\label{sec:semifan}

We start with a more general situation and then specialize to our
case. Let $N$ be a hyperbolic lattice of signature $(1,r-1)$,
$\cC\subset N_\bR$ the positive cone and $\opC$ its rational closure.
Let $W\subset O(N)$ be a discrete group generated by reflections in
vectors $\{r_k \in N \mid k\in K\}$ such that $r_k^2<0$ and
$r_k\cdot r_{k'}\ge0$ for $k\ne k'$. Let
\begin{displaymath}
  \fund = \{v \mid v\cdot r_k\ge0 \} \cap \opC = \cap_k H^+_{r_k} \cap\opC.
\end{displaymath}
be the fundamental domain of $W$. Then $P=\bP(\fund)$ is a polytope in
a hyperbolic space whose faces by Vinberg
\cite{vinberg1985hyperbolic-groups} admit a description similar to 
\eqref{thm:faces-fund-chamber}.

\begin{definition}\label{def:semifan}
  We split the set $K=I\sqcup J$ into two subsets of \emph{active} and
  \emph{inactive mirrors}. We call a face
  $\fund \cap_{j\in V} r_j^\perp$ of $\fund$ \emph{irrelevant}
  if $V\subset J$.
  Let $W_J = \la w_j, \ j\in J\ra$.  We define a bigger chamber 
  \begin{math}
    \Ch = \cup_{h\in W_J} \ h(\fund)
  \end{math}
  and a \emph{generalized Coxeter semifan} $\fF^\semi$ as the one
  whose maximal cones are $g(\Ch)$ for $g\in W$.
\end{definition}

\begin{proposition}\label{prop:semifan-general} ${}$
  \begin{enumerate}
  \item One has
    $\Ch = \cap_{i\in I,\ h\in W_J} H^+_{h(r_i)}$.  In particular, $\Ch$ is
    convex and locally finite.
  \item The stabilizer group of $\Ch$ in $W$ is $W_J$.
  \item The support of $\fF^\semi$ is $\opC$.
  \item The cones in $\fF^\semi$ are $g(F)$ for $g\in W$ and the
    relevant faces $F$ of~$\fund$.
  \end{enumerate}
\end{proposition}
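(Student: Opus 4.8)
Proof proposal for Proposition~\ref{prop:semifan-general}.

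\textbf{Setup and strategy.} The four claims are really two geometric facts (convexity/local finiteness of $\Ch$ and the description of its stabilizer) together with two bookkeeping consequences (support of $\fF^\semi$ and the cone list). I would prove them in the order (1), (2), (4), (3), since (2) and (4) use (1), and (3) follows from the fact that $\fF^\cox$ (or rather the Vinberg fan attached to $W$) already has support $\opC$. The essential input is Vinberg's theory of hyperbolic reflection groups: $\fund$ is a fundamental domain, the walls of $\fund$ are exactly the $r_k^\perp$ ($k\in K$), and the tessellation $\{g(\fund)\}_{g\in W}$ is locally finite in $\cC$ with $\Supp = \opC$. Two chambers $g(\fund)$, $g'(\fund)$ are adjacent across a wall $g(r_k^\perp)$ iff $g' = g w_k$.

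\textbf{Proof of (1).} By definition $\Ch = \bigcup_{h\in W_J} h(\fund)$. For the inclusion ``$\subseteq$'': each $h(\fund)$ lies in $\bigcap_{i\in I} H^+_{h(r_i)}$, but I need the stronger statement that $h(\fund)\subseteq H^+_{h'(r_i)}$ for all $h'\in W_J$ and $i\in I$. The point is that the hyperplane $h'(r_i^\perp)$ with $i\in I$ is an \emph{active} wall; it separates the chamber $h'(\fund)$ from $h'w_i(\fund)$, and since $h'w_i \notin W_J$ (as $w_i$ with $i\in I$ is not in $W_J$, and $W_J$ is the parabolic subgroup on $J$, hence intersects $w_i W_J$ trivially), the chamber $h'w_i(\fund)$ is not among the $h(\fund)$. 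I would argue that because the full chamber union $\Ch$ is a union of $W_J$-translates of a fundamental domain and any two adjacent chambers in this union are related by $w_j$ with $j\in J$, the ``boundary walls'' of $\Ch$ are precisely the hyperplanes $h(r_i^\perp)$, $h\in W_J$, $i\in I$, and $\Ch$ lies on the positive side of each. Concretely: given $v \in \Ch$, write $v \in h(\fund)$; for any $h'\in W_J$ and $i\in I$, the chambers $h(\fund)$ and the wall $h'(r_i^\perp)$ cannot be ``crossed'' within $\Ch$ because crossing it would land in a $W$-translate outside $W_J\cdot\fund$, so $v\cdot h'(r_i) \ge 0$. This gives ``$\subseteq$''. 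For ``$\supseteq$'': if $v$ is in $\opC$ and satisfies $v\cdot h(r_i)\ge 0$ for all $i\in I$, $h\in W_J$, then $v$ lies in some chamber $g(\fund)$ of the $W$-tessellation; I claim $g\in W_J$. Otherwise, write $g = h w_{k_1}\cdots w_{k_m}$ as a reduced word with $h\in W_J$ and at least one $k_\ell\in I$; then walking from $\fund$ to $g(\fund)$ one crosses an active wall of the form $h'(r_i^\perp)$, $i\in I$, and $v$ would be strictly on the negative side of it, contradicting the hypothesis. The standard way to make this precise is: the number of walls of the tessellation separating $\fund$ from $g(\fund)$ equals the reduced word length, and if $g\notin W_J$ at least one such wall is an active-type wall. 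Hence $v\in \Ch$. Local finiteness and convexity are then immediate: $\Ch$ is an intersection of half-spaces, and the collection $\{h(r_i^\perp): h\in W_J, i\in I\}$ is locally finite in $\cC$ because it is a subcollection of all reflection hyperplanes of $W$, which is locally finite in $\cC$ by Vinberg.

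\textbf{Proof of (2), (3), (4).} For (2): if $g\in W$ fixes $\Ch$ setwise then $g$ permutes the chambers $\{h(\fund)\}_{h\in W_J}$; since all of these contain a point of $\fund$'s interior only for $h=1$... more cleanly, $g$ maps $\fund$ to some chamber in $\Ch$, i.e. $g(\fund)=h(\fund)$ for some $h\in W_J$, forcing $g=h\in W_J$ (the $W$-action on chambers is simply transitive). Conversely every $h\in W_J$ permutes the chambers in the union defining $\Ch$, hence fixes $\Ch$. For (4): the cones of $\fF^\semi$ are, by Definition~\ref{def:semifan}, the faces of the maximal cones $g(\Ch)$ and their intersections. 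Since $\Ch = \bigcup_{h\in W_J} h(\fund)$ and $\fF^\cox$ restricted to $\Ch$ is a subfan (the walls internal to $\Ch$ are the inactive ones $h(r_j^\perp)$), the faces of $\Ch$ are unions of faces of the $h(\fund)$, and a face $F'$ of the subdivision is a face of $\Ch$ itself precisely when $F'$ is not contained in any inactive wall $h(r_j^\perp)$ strictly interior to $\Ch$, equivalently when $F'$ is a $W_J$-translate of a \emph{relevant} face $F$ of $\fund$ (one not cut out solely by inactive mirrors); combined with the $W$-action this gives the list $g(F)$. For (3): $\Supp\fF^\semi = \bigcup_{g\in W} g(\Ch) = \bigcup_{g\in W}\bigcup_{h\in W_J} gh(\fund) = \bigcup_{g'\in W} g'(\fund) = \opC$, the last equality being Vinberg's statement that $\fund$ is a fundamental domain for $W$ acting on $\opC$.

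\textbf{Main obstacle.} The only genuinely delicate point is the proof of (1) — specifically the assertion that the ``outer'' walls of the union $\Ch$ are exactly the active-type hyperplanes $\{h(r_i^\perp): h\in W_J,\ i\in I\}$ and that no inactive wall escapes to the boundary. This is where one must use that $W_J$ is a \emph{standard parabolic} subgroup (so that reduced words, wall-crossing sequences, and the coset combinatorics behave well) and that crossing an inactive wall keeps you inside $W_J\cdot\fund$ while crossing an active wall leaves it. I would phrase this via the standard gallery/wall-crossing lemma for Coxeter systems: for $g\in W$, the walls separating $\fund$ from $g\fund$ correspond to the inversions of $g$, and $g\in W_J$ iff all its inversions are $W_J$-walls; then the active walls of the chamber complex of $\Ch$ are those $W_J$-walls of $h\fund$ ($h\in W_J$) that happen to be $r_i$-type — and one checks directly that $h r_i$ with $h\in W_J, i\in I$ is never an inactive wall since $W_J$ preserves the set of inactive mirrors and acts freely enough. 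Everything else is formal manipulation of the fundamental-domain tessellation.
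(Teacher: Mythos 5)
Your proposal is correct in substance and reaches all four conclusions, but it takes a genuinely different route through part (1), which is the only nontrivial part. You argue via the general combinatorics of Coxeter systems: galleries, reduced words, inversions, and the fact (essentially Tits' convexity theorem for standard parabolic subgroups) that the union $W_J\cdot\fund$ is bounded exactly by the walls $h(r_i^\perp)$, $h\in W_J$, $i\in I$, because a reflection $hw_ih^{-1}$ with $i\in I$ never lies in $W_J$ while every wall separating two chambers of $W_J\cdot\fund$ does. The paper instead gives a short self-contained computation: for the inclusion $\Ch\subseteq\bigcap H^+_{h(r_i)}$ it uses the explicit formula $r_i\cdot w_j(v)=r_i\cdot v-\tfrac{2(v\cdot r_j)}{r_j^2}(r_i\cdot r_j)\ge r_i\cdot v$, which exploits the standing hypotheses $r_k\cdot r_{k'}\ge 0$ and $r_k^2<0$, and then inducts along a chain of adjacent chambers; for the reverse inclusion it runs a descent on the positive integer $\rho\cdot v$ for a strictly interior $\rho$, reflecting in inactive mirrors until the point lands in $\fund$. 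Your approach buys generality (it would work for any Coxeter system without the sign hypotheses on the Gram matrix, which the paper's inequality uses essentially), while the paper's buys brevity and avoids importing the reduced-word machinery.

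Two loose ends you should tighten if you write this up. First, in the $\supseteq$ direction your hypothesis only controls $v\cdot h(r_i)$ for $h\in W_J$, so you must ensure the first active wall crossed in a gallery from $\fund$ to $g(\fund)$ is of the form $h(r_i^\perp)$ with $h\in W_J$; this follows from taking a reduced word whose initial segment before the first $I$-letter lies in $W_J$, together with an induction on length to handle points lying on walls, but it is exactly the step you leave as ``standard.'' Second, in (4) your stated criterion ``$F'$ is a face of $\Ch$ iff it is not contained in any inactive wall strictly interior to $\Ch$'' is not literally right: a relevant face $F(V)$ with $V$ meeting both $I$ and $J$ is contained in inactive walls yet is a face of $\Ch$. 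The correct criterion, which the paper uses and which your parenthetical conclusion matches, is that $F(V)$ fails to be a face of $\Ch$ precisely when the chambers $h(\fund)$, $h\in W_J$, cover a neighborhood of it, i.e.\ when $W_V\subset W_J$, i.e.\ when $V\subset J$.
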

\begin{proof}
  Consider a single reflection $w_j$ in a vector $r_j$, $j\in J$ and a
  neighboring chamber $w_j(\fund) = \cap_i H^+_{r'_k}$, where
  $r'=w_j(r_k)$. Then for $i\ne j$ and $v\in \fund$ one has
  \begin{displaymath}
    r_i \cdot w_j(v) =
    r_i  \cdot \big(v - \frac{2 r_i\cdot r_j}{r_j^2} r_j \big)
    \ge r_i\cdot v = w_j(r_i) \cdot w_j(v).
  \end{displaymath}

  A product of two generators of $W_J$ is
  $w_j w_{j'} = (w_jw_{j'}w_j\inv) w_j$ which is the same as the
  reflection in the inactive mirror $r_j^\perp$ followed by the
  reflection in an inactive mirror of the neighboring chamber
  $w_j(\fund)$. In the same way, any element $h\in W_J$ is a product
  of reflections in inactive mirrors in a sequence of neighboring
  chambers.
  
  By induction we get 
  \begin{math}
    r_i\cdot h(v) \ge h(r_i) \cdot h(v) = r_i \cdot v \ge0
  \end{math}
  for any $i\in I$ and $h\in W_J$.  Thus, $\Ch \subset H^+_{h(r_i)}$.

  Vice versa, suppose $v\in \opC$ is such that $h(r_i)\cdot v\ge0$ for
  all $i\in I$ and $h\in W_J$. Let $\rho\in N$ be a vector in the
  interior of $\fund$. Then $\rho \cdot r_k \in \bZ_{>0}$ for all
  $k\in K$. If there exists $j\in J$ such that $v\cdot r_j <0$ then
  \begin{displaymath}
    \rho \cdot w_j(v) =
    \rho \cdot \big(v - \frac{2 v \cdot r_j}{r_j^2} r_j \big)
    < \rho \cdot v.
  \end{displaymath}
  Both $\rho\cdot v$ and $\rho\cdot w_j(v)$ are positive integers and
  the set of vectors $v$ with $0< \rho\cdot v \le \text{const}$ is
  finite. Therefore, after finitely many reflections in $h'(r_j)$,
  $h'\in W_J$, we arrive at an element $h(v)$, $h\in W_J$, such that
  $r_j\cdot h(v)\ge0$ for $j\in J$. For all $i\in I$ we already have
  $r_i\cdot h(v) = h\inv(r_i) \cdot v\ge0$. Thus, $h(v) \in \fund$ and
  $v\in h\inv(\fund)$. This proves (1).
  Parts (2,3) are immediate.

  For (4), clearly each face of $\Ch$ is
  of the form $g(F)$ for some face $F$ of $\fund$. A face
  $F= \fund \cap_{i\in V} r_i^\perp$ is \emph{not} a face of $\Ch$ if
  the images $g(\fund)$ for $g\in W_J$ cover its open
  neighborhood. This happens when $W_V \subset W_J$, i.e. $V\subset
  J$ and $F$ is irrelevant.
\end{proof}

We now apply this to our lattice $N=H\oplus E_8^2\oplus A_1$, the 24
roots $r_k$, and the sets $I=\{0,\dotsc,17\}$ and
$J=\{18,\dotsc,23\}$. In this situation the cone $\Ch$ 
has infinitely many faces and an infinite stabilizer group
in $O(N) = S_3\ltimes W(N)$. This explains the name \emph{semifan}
that we use for the generalized Coxeter semifan $\fF^\semi$.

\begin{corollary}\label{cor:Fsemi-Fcox}
  The semifan $\fF^\semi$ is a coarsening of the Coxeter fan
  $\fF^\cox$. For two elliptic or maximal parabolic subdiagrams
  $G_1,G_2\subset G_\vin$ the corresponding cones of $\fF^\cox$ map to
  the same cone in $\fF^\cox$ iff $G_1^\rel = G_2^\rel$.
\end{corollary}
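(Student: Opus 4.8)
The plan is to deduce this directly from Proposition \ref{prop:semifan-general}(4) together with the dictionary between faces of the fundamental chamber and subdiagrams from Theorem \ref{thm:faces-fund-chamber} and Lemma \ref{lem:cones-fcox}. First I would observe that by construction $\fF^\semi$ is obtained from $\fF^\cox$ by merging, for each active-mirror-class $g\in W$, the chambers $g(h(\fund))$ over $h\in W_J$ into the single cone $g(\Ch)$; hence every cone of $\fF^\cox$ is contained in a unique cone of $\fF^\semi$, i.e. $\fF^\semi$ is a coarsening. This already gives the first sentence. For the second sentence, recall that by Lemma \ref{lem:cones-fcox} the cones of $\fF^\cox$ modulo $W(N)$ (or $O^+(N)$) are in bijection with faces of $\fund$, hence with elliptic and maximal parabolic subdiagrams $G\subset G_\vin$; and by Proposition \ref{prop:semifan-general}(4) the cones of $\fF^\semi$ are precisely the $W$-orbits of the \emph{relevant} faces of $\fund$.

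The key computation is to identify, for a face $F = F(I(F))$ corresponding to a subdiagram $G$, which cone of $\fF^\semi$ contains it, and to show that this cone depends only on $G^\rel$. Concretely: a face $F$ lies on exactly the mirrors $r_k^\perp$ with $k\in I(F)$, where $I(F)$ is the vertex set of $G$. Splitting $I(F) = I(F)_{\mathrm{act}}\sqcup I(F)_{\mathrm{inact}}$ according to $I=\{0,\dots,17\}$ and $J=\{18,\dots,23\}$, the inactive part $I(F)_{\mathrm{inact}}$ records exactly the irrelevant content of $G$ and $I(F)_{\mathrm{act}}$ records the relevant content $G^\rel$ (here I use Definition \ref{def:irr-subdiagram}: a connected component is irrelevant precisely when its vertices lie in $\{18,\dots,23\}=J$). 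Sliding $F$ inside $\Ch$ by elements of $W_J$, the relevant face containing $F$ in its relative interior is obtained by dropping the inactive mirrors, i.e.\ it corresponds to the subdiagram on $I(F)_{\mathrm{act}}$, which is exactly $G^\rel$. Thus two subdiagrams $G_1$, $G_2$ give faces landing in the same $\fF^\semi$-cone (mod $W$) if and only if $G_1^\rel = G_2^\rel$, up to the residual $S_3$-action which acts identically on relevant content. I should also check the boundary (Type II) case: a maximal parabolic $\wG$ has relevant content $\wG^\rel$ defined the same way, and the argument is verbatim the same, with the one degeneration $\wE_8^2\wA_1 \mapsto \wE_8^2$ being exactly the phenomenon recorded just before Section \ref{sec:semifan}.

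The main obstacle I anticipate is a bookkeeping one rather than a conceptual one: making precise that "dropping the inactive mirrors" really produces a \emph{face of $\Ch$} (not a face of some $h(\fund)$ that gets subdivided further), and that the face so obtained is the minimal cone of $\fF^\semi$ containing $F$. This is handled by part (4) of Proposition \ref{prop:semifan-general}, which says exactly that the faces of $\Ch$ are the images of the relevant faces of $\fund$; so the argument reduces to: $F$ is in the $W$-orbit of the relevant face $F'$ with $I(F') = I(F)\cap I$, and $F'$ corresponds to the subdiagram $G^\rel$. One small additional point to verify is that the correspondence $G\mapsto$ (its active vertex set) indeed sends $G^\rel$ to $I(F)\cap I$ and that distinct relevant subdiagrams (mod $S_3$, resp.\ mod $D_9$ for those inside the $18$-cycle) give distinct relevant faces — but this is immediate from the injectivity half of Theorem \ref{thm:faces-fund-chamber}.
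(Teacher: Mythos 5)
Your first sentence (that $\fF^\semi$ coarsens $\fF^\cox$) is handled correctly and is indeed immediate from Proposition \ref{prop:semifan-general}. The second half of your argument, however, breaks down at its central step: you identify the relevant content $G^\rel$ with the set of \emph{active} vertices $I(F)\cap I$. That is not what Definition \ref{def:irr-subdiagram} says. The relevant content is obtained by deleting only those connected components of $G$ that lie \emph{entirely} in $J=\{18,\dots,23\}$; a vertex $j\in J$ sitting in a component that also meets $\{0,\dots,17\}$ survives into $G^\rel$. Concretely, $G=\pA_2^-=[18,0]$ is connected and meets $I$, so $G^\rel=G$ contains the inactive vertex $18$, whereas $I(F)\cap I=\{0\}$. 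Your recipe would therefore send the cones for $[18,0]$ and for $[0]$ to the same cone of $\fF^\semi$, while the corollary requires them to stay distinct — and they must, since they index distinct strata of $\oF_2^\slc$ (of codimensions $2$ and $1$, carrying non-isomorphic stable surfaces). The same failure occurs for every diagram of shape $\pA$, $D$, $E$, $\pA'$, $D'$ in Table \ref{tab:vin-conn-subdiags}, all of which contain a vertex from $\{18,19,20\}$ inside a relevant component.

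The geometric point your argument misses is that passing from $\fund$ to $\Ch$ does not simply ``drop the inactive mirrors.'' By Proposition \ref{prop:semifan-general}(1) the defining half-spaces of $\Ch$ are $H^+_{h(r_i)}$ for $i\in I$ and $h\in W_J$, and if $j\in J$ is adjacent inside $G$ to an active vertex $i\in G$, then the conjugate root $s_{r_j}(r_i)=r_i+(r_i\cdot r_j)\,r_j$ lies in the root subsystem generated by $G$; its mirror is a wall of $\Ch$ that contains $F(G)$ but does not contain $F(G\setminus\{j\})$, so $F(G)$ remains a proper face of $\Ch$ and is not absorbed into the face for $I(F)\cap I$. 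Only when a whole component of $G$ lies in $J$ do all the walls of $\fF^\cox$ through $F(G)$ involving that component fail to be $W_J$-conjugates of active mirrors, so that $F(G)$ falls into the relative interior of the cone for $G$ minus that component. Deciding which roots of $R_G$ are $W_J$-conjugate to active simple roots — and checking that the answer is governed exactly by the connected components of $G$ lying in $J$ — is the actual content of the second sentence of the corollary, and it is precisely the step your proof erases by conflating $G^\rel$ with $G\cap I$.
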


\begin{remark}
  The same construction applies to an elliptic lattice or parabolic
  ambient diagram $G_\cox$.
  When the subdiagram $J$ is elliptic, the Weyl group $W_J$
  is finite. In this case the resulting semifan is a fan and it can be
  alternatively defined as the normal fan of a permutahedron.
		
  The fan $\fF^\cox$ itself is the normal fan of the
  permutahedron, an infinite polyhedron $\Conv(W.p)$ for a point $p$
  in the interior of $\fund$. If $q$ is chosen to be on a
  lower-dimensional face of $\fund$ for an elliptic subdiagram $J$,
  with a finite Weyl group $W_J$, then $\fF^\semi$ is again the normal
  fan of the permutahedron $\Conv(W.q)$. This is basically the
  ``Wythoff construction'' for the uniform polytopes in Coxeter
  \cite{coxeter1935wythoffs-construction}.
\end{remark}

Looijenga \cite{looijenga1985semi-toric,
  looijenga2003compactifications-defined2} has constructed a
generalization of both the Baily-Borel and toroidal compactifications
of an arithmetic quotients $\Gamma\backslash\bD$ of a symmetric
Hermitian domain. The starting data is a semifan supported on $\opC$
in which the cones are not assumed to be finitely generated or to have
finite stabilizers. For example, the Baily-Borel compactification
corresponds to the semifan consisting only of the cone $\opC$ itself
and its null rays.

\begin{definition}
  Let $\oF_2^\semi$ be the semi-toric compactification for the
  generalized Coxeter semifan~$\fF^\semi$.
\end{definition}

\begin{theorem}\label{tor-to-semi}
  There is a morphism $\oF_2^\tor\to \oF_2^\semi$, an isomorphism on $\Gamma\backslash \bD$, whose induced map on strata
  is isogenous to the natural map of tori $$\Hom(M_G, \bC^*)\to \Hom(M_{G^\rel}, \bC^*),\quad \Hom(M_G, \cE)\to \Hom(M_{G^\rel}, \cE)$$ in Types III, II, respectively.
  \end{theorem}
\begin{proof}
  This follows directly from Proposition \ref{prop:semifan-general}, Corollary \ref{cor:Fsemi-Fcox},
  and the functoriality of the semi-toric construction under refinement of semifans. 
\end{proof}

Note that $|G|-|G^\rel|\leq 3$, with the maximum achieved when there are
three $A_1^\irr$ diagrams in $G$. So the largest fiber dimension of the morphism
in Theorem \ref{tor-to-semi} is $3$. For the Type II boundary strata, $G=G^\rel$ except
when $G= \wE_8^2\wA_1$ in which case, the morphism of Theorem \ref{tor-to-semi} loses
$1$ dimension.

\section{The mirror surfaces}
\label{sec:mirror-k3}

In Dolgachev-Nikulin-Voisin mirror symmetry for K3 surfaces
\cite{dolgachev1996mirror-symmetry}, the 19-dimensional moduli space
$F_2$ of polarized K3 surfaces with a rank-1 Picard lattice $\bZ h$,
$h^2=2$, is mirror-symmetric to the 1-dimensional moduli space of
lattice-polarized K3 surfaces with a primitive rank 19 sublattice
$H\oplus E_8^2\oplus A_1 \subset \Pic S$. We describe the latter
explicitly, and show that for a general surface $S$ its nef cone can be
identified with the fundamental chamber $\fund$ of the Coxeter fan
$\fF^\cox$. 

The K3 surfaces in this family admit several elliptic fibrations, one
of which contains an $I_{18}$ Kodaira fiber. It turns out that they
also come with an involution that fixes this $I_{18}$ fiber, and the
quotient surface $T = S/\iota$ is a non-minimal rational elliptic
surface with an $I_9$ Kodaira fiber in its minimal form.

\begin{figure}[!htp]
  \begin{center}
    \begin{minipage}[c]{175pt}
      \centering
      \includegraphics[width=\linewidth]{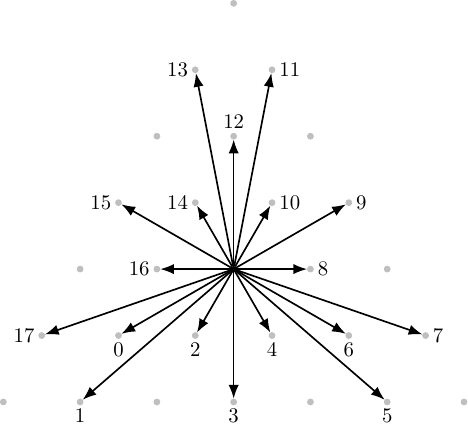}
    \end{minipage}
    \begin{minipage}[c]{175pt}
      \centering
      \includegraphics[width=\linewidth]{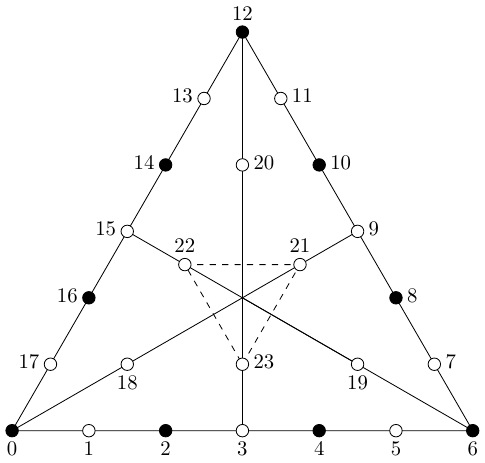}
    \end{minipage}
    \caption{Fan of the toric surface $\oT$ and the dual graph of
      negative curves on the surface $T=\Bl_{p_0,p_6,p_{12}} (\oT)$}
    \label{fig:fan}
  \end{center}
\end{figure}

\subsection{A toric model}
\label{subsec:toric-model}

We begin with a toric surface $\oT$ whose fan is depicted in
Fig.~\ref{fig:fan} on the left. It is easy to see that $\oT$ is smooth
and projective. For each ray we have a boundary curve $\oF_i$. One has
$\oF_i^2=-3$ for $i=0,6,12$, $\oF_i^2=-4$ for other even $i$, and
$\oF_i^2=-1$ for odd $i$. The Picard rank is $\rho(\oT)=16$.  There
are three toric rulings $\oT\to\bP^1$ corresponding to the opposite pairs of rays
numbered $0,9$, resp. $6,15$ and $12,3$.

\subsection{A rational elliptic surface}
\label{sec:rational-elliptic}

We define $T$ as the blowup of $\oT$ at three points $P_i\in \oF_i$,
$i=0,6,12$, each corresponding to the identity $1\in\bP^1$ under the
torus action. Let the exceptional divisors of this blowup be $F_{18}$,
$F_{19}$, $F_{20}$, and let $F_i$ for $0\le i<18$ be the strict
transforms of the divisors $\oF_i$ on $\oT$. 

The fiber over $P_0$ in the first ruling defined above is, after pullback,
$F_{18}+F_{21}$, where $F_{21}$ is a $(-1)$-curve intersecting
$F_9$. Similarly, the pulled back fiber of the second fibration over $P_6$ is
$F_{19}+F_{22}$, and the pulled back fiber of the third fibration over $P_{12}$ is
$F_{20}+F_{23}$. One has $F_i^2=-4$ for the even $0\le i<18$, and $-1$
for all other $i$. The intersection graph of $F_i$'s is given in
Fig.~\ref{fig:fan} on the right. The black vertices correspond to the
$(-4)$-curves and white vertices to the $(-1)$-curves.
For the solid edges one has $F_i\cdot F_j=1$, and for the
dashed edges $F_i\cdot F_j=3$.

The divisor $F = \sum_{i=0}^{17} F_i$ satisfies $\cO_F(F)\simeq\cO_F$
and defines an elliptic fibration $T\to\bP^1$. Contracting the nine
$(-1)$-curves $F_1,F_3,\dots,F_{17}$ gives a relatively minimal
elliptic fibration with an $I_9$ Kodaira fiber and three $I_1$
fibers. This is the extremal elliptic surface $X_{9111}$ in the
terminology of \cite[Thm.4.1]{miranda1986on-extremal-rational}; it has
three sections and three bisections, given by $F_i$ for $18\leq i< 24$.
The exceptional curves not lying in the fibers are precisely the
sections. Thus, $F_i$ for $0\le i<24$ are all the negative curves
on $T$.

\subsection{An elliptic K3 double cover}

Let $\pi\colon S\to T$ be the double cover ramified in the nine
$(-4)$-curves $F_0,F_2,\dotsc, F_{16}$ and another fiber $F'$ of the
elliptic fibration. Since there are three special $I_1$ fibers, one
gets a 1-parameter family of such surfaces,
with three members of the family having a rational
double point. For a very general choice of $F'$ one has
$\rho(S)=19$. A more detailed discussion of the moduli space of these
mirror K3s may be found in \cite[Sec.5]{doran2015families-of-lattice}.

For the preimages $E_i$ of the exceptional curves one has
$\pi^*(F_i)=2E_i$ for the even $0\le i<18$ and $\pi^*(F_i)=E_i$ for
all other $i$. Then $E_i^2 = -2$ for all $0\le i<24$ and the
intersection graph of $E_i$'s is the Coxeter graph of
Fig.~\ref{fig:vinberg}. Thus, $E_i$ generate a 19-dimensional lattice
$N=H\oplus E_8^2\oplus A_1$.  Since $\det N=2$ is square-free, it
follows that $\Pic S = N$. Thus, $S$ is a 2-elementary K3 surface
described by Nikulin and Kondo. Note that the graph of the $(-2)$-curves in
\cite[Fig.1]{kondo1989algebraic-k3} is exactly our Coxeter
graph.

The elliptic fibration on $T$ induces an elliptic fibration on $S$
with an $I_{18}$ fiber, which is $\wA_{17}$ in Dynkin notation. The
preimage of a ruling on $T$ for the rays $0,9$
(or $6,15$ or $12,3$) gives an elliptic fibration on $S$ with
$\wE_8\wE_8\wA_1$ fibers). The preimage of a ruling for the
rays $2,10$ (or $4,14$ or $8,16$) gives an elliptic fibration with
$\wD_{10}\wE_7$ fibers. The three subdiagrams $\wD_{16}\wA_1$ give yet
three more elliptic fibrations on $S$ which also double cover
rulings on $T$, see section~\ref{sec:second-toric}.

\subsection{The nef cones of the rational and K3 surfaces}

\begin{lemma}\label{lem:nef-cone}
  For a surface $S$ as above with $\rho=19$ the nef cone is a finite,
  polyhedral cone equal to
  \begin{displaymath}
    \Nef(S) = \{\lambda \mid \lambda\cdot E_i\ge0 \mid 24 \text{ curves } E_i \}
  \end{displaymath}
  Under the identification $\Pic(S)=N$, it maps isomorphically to a
  fundamental chamber $\fund$ of the Coxeter fan $\fF^\cox$.  The double cover
  defines identifications
  \begin{displaymath}
    \pi^*\colon\Pic(T)_\bQ\isoto\Pic(S)_\bQ, \qquad
    \pi^*\colon\Nef(T)\isoto\Nef(S).
  \end{displaymath}
  However the lattice structures on $\Pic(T)$ and $\Pic(S)$ are
  different.
\end{lemma}
\begin{proof}
  The nef cone of an algebraic surface
  is the intersection of the closure
  of the positive cone
  $\pC = \{\lambda \mid \lambda^2>0,\ \lambda\cdot h>0\} \subset \Pic(S)_\bR$ with the half
  spaces $\lambda \cdot E\ge0$ for the irreducible curves $E$ with $E^2<0$.  By
  \cite{kondo1989algebraic-k3} the 24 curves $E_i$ are the only
  negative curves on $S$. We thus get the same inequalities that define a
  fundamental chamber $\fund$ of $\fF^\cox$.

  The pullback of a negative curve is a sum of negative curves. Thus,
  $F_i$ for $0\le i<24$ are the only negative curves on $T$, and
  $\pi^*\colon\Nef(T)\isoto\Nef(S)$.
\end{proof}

\subsection{A second toric model}
\label{sec:second-toric}

\begin{figure}[!h]
  \begin{center}
      \includegraphics[width=300pt]{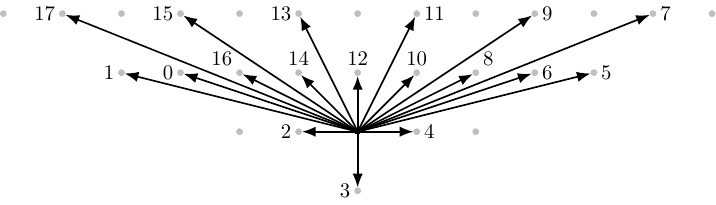}
      \caption{Fan of the toric surface $\ov{\oT}$}
    \label{fig:fan-unigonal}
  \end{center}
\end{figure}

Let $T\to \ov{\oT}$ be the contraction of the disjoint $(-1)$-curves
$F_{19},F_{20},F_{21}$. Just as $\oT$, the surface $\ov{\oT}$ is also
a smooth projective toric surface with $\rho(\ov{\oT})=16$. Its fan is
shown in Fig.~\ref{fig:fan-unigonal}.


\section{Family of $\ias$ over the Coxeter fan}
\label{sec:ias-over-coxeter}

We now define a
family of polarized $\ias$ over the Coxeter fan (our ``Voronoi" decompositions).
We motivate the construction with mirror symmetry.

As we saw in
Section \ref{sec:reflection-fan}, a compactification of $F_2$
is governed by a fan decomposition $\mathfrak{F}$ of the rational closure
of the positive cone of $N=H\oplus E_8^{\oplus 2}\oplus A_1$. Each $\lambda\in N$,
$\lambda^2\geq 0$ determines a Picard-Lefschetz transformation of a one-parameter
degeneration of complex structure, whose logarithm is given by
$(\log T)\cdot x = (x\cdot \delta)\lambda - (x\cdot \lambda)\delta$.
Mirror symmetry dictates that the complex moduli of $F_2$ is interchanged with the
K\"ahler moduli of the Dolgachev-Nikulin-Voisin mirror K3 surface $S$ from
 Section \ref{sec:mirror-k3}. This is instantiated in the isomorphisms ${\rm Pic}(S)=N$,
 ${\rm Nef}(S)=\fund$.

To make the mirror correspondence more precise, consider some $\lambda\in N$,
$\lambda^2>0$. The symplectic geometry of $(S,\omega)$ in K\"ahler class
$[\omega]=\lambda$ should be interchanged with the complex geometry of
a degenerating family of degree $2$ surfaces, whose monodromy vector is $\lambda$.
We have a mechanism for this interchange---the Monodromy Theorem
of Section \ref{sec:monodromy}. It states that the $\ias$ on the base
of a Lagrangian torus fibration $\mu\colon (S,\omega)\to B$ 
should be identified with the dual complex $B=\Gamma(\cX_0)$ of a Kulikov degeneration
$\cX\to (C,0)$ whose monodromy vector is $\lambda$.

Finally, we recall the construction $S\to T$ as a double cover of a rational surface. This motivates
a construction of $B$ for any monodromy vector $\lambda$, and thus any 
Type III degeneration:
We should produce a Symington polytope $P$ for the rational surface $T$, 
then glue two copies $B= P\cup P^{\op}$ together
to form an $\ias$ which is the base of a Lagrangian torus fibration $\mu\colon (S,\omega)\to B$
satisfying $[\omega]=\lambda$.

We also give an explicit description of the Type II degenerations
corresponding to the cusps of $\fund$, when $\lambda^2=0$.

\subsection{Construction of $\ias$} 
\label{subsec:def-ias}

Let $\pi\colon S\to T$ be the double cover of a special K3 rational
by a special K3 surface as defined in
Section~\ref{sec:mirror-k3}.  Let $L \in\Pic(T)\otimes\bR$ be a nef
class. Let $a_i = \pi^*(L)\cdot E_i$ for the $(-2)$-curves $E_i\subset S$
and let $b_i = L \cdot F_i$ for the $(-1)$- and $(-4)$-curves on $T$.  Thus,
$a_i = b_i$ for the even $0\le i<18$, and $a_i = 2b_i$ for all other~$i$.

Let $\phi\colon T\to\oT$ be the blowup of the first toric model, which
contracts exceptional curves $F_{18}$, $F_{19}$, $F_{20}$
meeting $F_0$, $F_6$, $F_{12}$. Set $\bar b_i = \oL\cdot \oF_i$. Then
\begin{eqnarray*}
  &&L = \phi^*(\oL) - b_{18}E_{18} - b_{19}E_{19} - b_{20}E_{20},\\
  &&b_0 = \ob_0 - b_{18}, \quad
  b_6 = \ob_6 - b_{19}, \quad
  b_{12} = \ob_{12} - b_{20}, \quad b_i=\ob_i
  \text{ for other } i.
\end{eqnarray*}

\begin{figure}[!htp]
  \begin{center}
    \includegraphics[width=290pt]{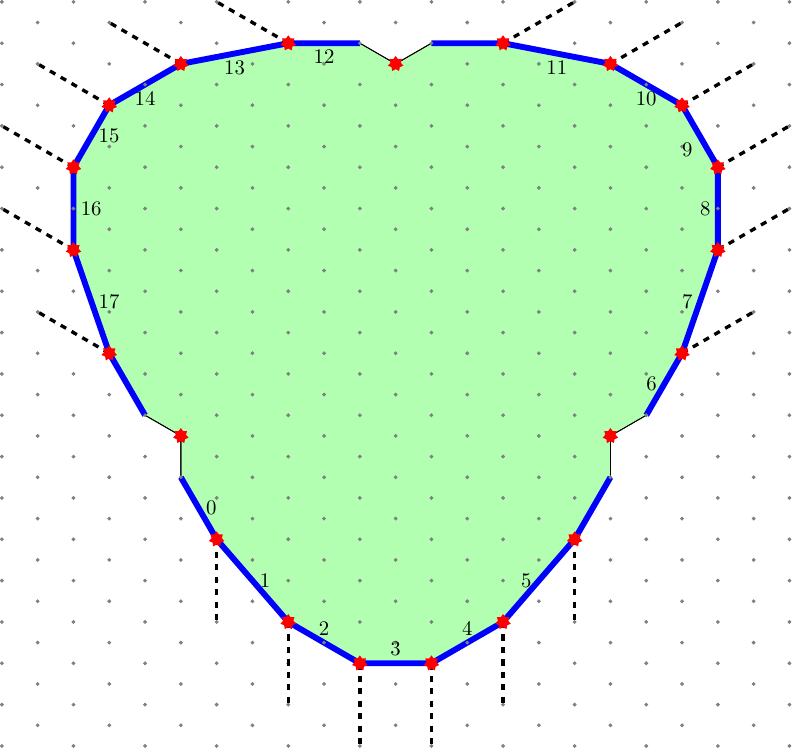}
    \vskip .5cm
    \includegraphics[width=290pt]{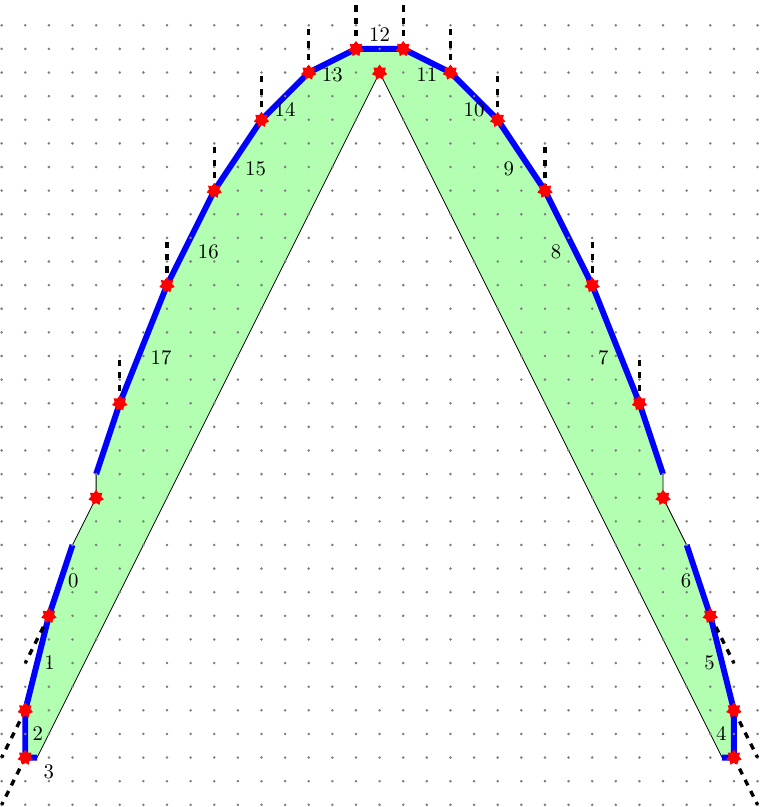}
    \caption{An example of the same $\ias$ glued in two different ways}
    \label{fig:ias}
  \end{center}
\end{figure}

\begin{construction}\label{con:ias}
  In Lemma~\ref{lem:nef-cone} we identified the nef cones of $S$ and
  $T$ with a fundamental chamber $\fund$ of the Coxeter fan $\fF^\cox$.
  So let $L=\vec{b} \in \Nef(T)=\fund$ be a nef
  $\bR$-divisor.

  First assume that all $b_i>0$; a fortiori all $\ob_i>0$. Let $P$
  be the Symington polytope obtained from the moment polytope $\oP$
  for $\overline{L}$ by three almost toric blow-ups of size
  $b_{18},b_{19},b_{20}$ on sides 0, 6, 12 as shown in Figure
  \ref{fig:ias}. This introduces three $I_1$ singularities in the
  interior of $P$ whose monodromy-invariant lines parallel the side
  from which the surgery triangle was removed. So $P$ is an
  integral-affine disc with 18 boundary components.
  The location of the cut on the sides 0, 6, 12 can be chosen
  arbitrarily; ultimately choices will produce Kulikov models
  differing by ``nodal slides'' defined below, which do not affect
  anything.
We make \emph{the symmetric choice:} with the cut centered
  around the midpoint of the side.

  By \cite[Thm.5.3]{engel2021smoothings} the class $L$ on $T$ is
  nef iff it is possible to fit surgery triangles of the appropriate
  size inside the polytope for a toric model without overlapping. In
  our case, this is also easy to see directly.

Define an integral-affine sphere $B:=P\cup P^\op$ by gluing together
two copies of~$P$. This requires introducing an $I_1$ singularity at
each corner of $P$, whose monodromy-invariant lines are shown dashed
in Fig.~\ref{fig:ias}. More precisely, we can take top figure for $P$ in Fig.~\ref{fig:ias},
and take its isometric reflection along the edge $3$ (with edges $9$ or $15$, it is similar).
This produces a copy of $P^{\op}$ attached to $P$ along $3$, but there is a gap between
edge $4$ and its reflection. This gap is closed exactly by gluing edge $4$ and its
reflection with a unit shear in the dotted direction. Once this gluing is made, we must
introduce another singularity to glue edge $5$ and its reflection. And so on for
edges 0, 1, 2, 3, 4, 5, 6 (and similarly for the other edges).

The general case is obtained as a
limit of the above construction by sending some of the $b_i$'s to zero.
\end{construction}

\begin{definition}\label{def:ias-voronoi}
  For any real vector $\vec a=(\lambda\cdot r_i)_{i\in \{0,\dots,23\}}$ with
  $\lambda\in\fund$, $\lambda^2>0$, this construction defines an integral affine
  structure $B(\vec a)$ on a sphere with 24 singularities,
  some of which may coalesce, an $\ias$
  for short. We sometimes suppress the dependence on $\vec{a}$.

  When all $a_i>0$, we define an integral-affine divisor $R_\ia$ whose
  supporting graph is the equator, that is, the common boundary of $P$ and
  $P^\op$. The multiplicities are $2$ for the even sides and $1$
  for the odd sides. The assumption $a_i>0$ implies that
  the $\ias$ has 18 isolated $I_1$ singularities on
  the equator. By Remark \ref{balancing1}, this suffices to define
$R_\ia$ uniquely.

When some $a_i=0$, the definition of 
$R_\ia$ is quite subtle. It is delayed until Section \ref{sec:ias-polarization},
but the supporting graph is still the equator,
  and the multiplicities are the same values, 1 and 2, for the odd and even sides
  $i\in\{0,\dots,17\}$ with $a_i\ne0$.
\end{definition}

The pair $(B, R_\ia)$ is an analogue of a Voronoi decomposition in the
case of abelian varieties. As $\vec a$ varies continuously, so do
they. 

\begin{lemma}\label{lem:ias-volume}
  One has $(\pi^*L)^2 = 2L^2 = \vol(B)$, where the latter is the lattice
  volume, twice of the Euclidean one.
\end{lemma}
\begin{proof}
  By definition, $\vol(B)=2\vol(P)$ and
  $\vol(P) = \vol(\oP)- b_{18}^2-b_{19}^2-b_{20}^2$. It is easy to see
  that $L^2=\oL^2-b_{18}^2-b_{19}^2-b_{20}^2$.  For any toric variety
  with a nef class, its volume is the lattice volume of the moment
  polytope; this gives $\oL^2=\vol\oP$.
\end{proof}

\begin{remark}
  By definition, $b_{18}$ is the lattice distance from the singularity
  to the side 0. The linear relation
  $n\big(\wE_8\big)= n\big(\wA_1\big)$ of \eqref{lem:N-linear-rels}
  implies that $b_{21}$ is the lattice distance to the opposite side
  9.  Similarly for $b_{19},b_{22}$ and $b_{20},b_{23}$.
\end{remark}

\begin{example}
  Fig.~\ref{fig:ias} shows a concrete example with
  \begin{flalign*}
    &\ob_0 = \ob_6=\ob_{12}=3, \quad b_2=b_4=\dotsb=b_{16}=2,
    \quad b_1=b_3=\dotsb=b_{17}=1, \\
    &b_{18} = b_{19} = b_{20}=1, \quad b_0=b_6=b_{12}=2,
    \quad b_{21} = b_{22} = b_{23} = 29.
  \end{flalign*}
  The green interior region is an open chart for the integral-affine
  structure on the disc $P$. In the $a$-coordinates, $a_i=2\cdot 1$ for
  $0\le i<21$ and $a_{21}=a_{22}=a_{23}=2\cdot 29$.
  
  The second picture gives an alternative way of presenting the same
  $\ias$, using the second toric model $\ov{\oT}$. It is obtained by
  cutting a different ray emanating from the $I_1$ singularity, which, instead
  of hitting the edge $12$, goes in the opposite direction, towards edge $3$. \end{example}

Recall that in \eqref{lem:N-linear-rels} we defined the vectors
$n(\wt\Lambda)$ for the affine Dynkin diagrams $\wA_{17}$, $\wE_8$,
$\wA_1^\irr$, $\wD_{10}$, $\wE_7$, $\wD_{16}$ and $\wA_1^*$, using
notations of Table~\ref{tab:vin-conn-subdiags}.

\begin{lemma}\label{lem:height-width}
  The circumference in the vertical direction, that is twice the
  lattice distance in $P$ and in $\oP$ between the sides $3$ and $12$ is
  $\ev(n_{3,12})$, where
  \begin{displaymath}
    n_{3,12} = n\big(\wE_8^{(1)}\big)= n\big(\wE_8^{(2)}\big)=
    n\big(\wA_1^\irr\big)
    \quad\text{and}\quad \ev(r_i)=a_i    
  \end{displaymath}
  is the evaluation map. Similarly, the circumference in the $8$-$16$
  direction is $\ev(n_{8,16})$ for
  $n_{8,16} = n\big(\wD_{10}\big) = n\big(\wE_7\big)$; the
  circumference in the $2$-$4$ direction in the second presentation
  (i.e. around a singularity, close to the side 12)
  is $\ev(n_{2,4})$  with
  $n_{2,4} = n\big(\wD_{16}\big) = n\big(\wA_1^*\big)$; and the
  circumference along the equator is $\ev\big( n(\wA_{17}) \big)$.
\end{lemma}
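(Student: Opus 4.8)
The plan is to compute each of the four numbers as the lattice length of a closed geodesic loop $\gamma$ in the $\ias$ $B=P\cup P^{\op}$, and to identify that loop, via the Lagrangian torus fibration $\mu\colon(S,\omega)\to B$ of Theorem~\ref{thm:monodromy} together with the visible‑surface construction~\ref{visibleconstruction}, as the tropicalization of a Kodaira fiber of one of the elliptic fibrations on the mirror K3 surface $S$ of Section~\ref{sec:mirror-k3}. For a geodesic loop $\gamma$ carrying its primitive integral tangent vector field, the associated visible surface $\Sigma_\gamma\subset S$ is homologous to the corresponding sum $\sum c_i E_i$ of $(-2)$-curves, and the lattice length of $\gamma$ equals $[\omega]\cdot[\Sigma_\gamma]=\pi^*L\cdot[\Sigma_\gamma]$; since $\pi^*L\cdot E_i=\ev(r_i)=a_i$ by definition, this is $\sum c_i a_i=\ev(\sum c_i r_i)$, and $\sum c_i r_i$ is exactly the square‑zero class of the Kodaira fiber, i.e.\ the vector $n(\widetilde\Lambda)$ attached to the relevant maximal parabolic subdiagram. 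Lemma~\ref{lem:N-linear-rels} then accounts for the several displayed expressions for this one vector.

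Concretely, I would argue as follows. The equator develops to the $I_{18}$ Kodaira fiber $E_0+\dots+E_{17}$ of the elliptic fibration on $S$ induced from the elliptic fibration on $T$ (Section~\ref{sec:mirror-k3}), which is $\wA_{17}$ in Dynkin notation, so the circumference along the equator is $\pi^*L\cdot\sum_{i<18}E_i=\sum_{i<18}a_i=\ev(n(\wA_{17}))$. The $3$-$12$ loop develops to the $\wE_8^2\wA_1$ fiber of one of the elliptic fibrations on $S$ that double‑cover a ruling of $T$; its class is $n(\wE_8^{(1)})=n(\wE_8^{(2)})=n(\wA_1^\irr)$ by Lemma~\ref{lem:N-linear-rels}, so the circumference is $\ev(n_{3,12})$. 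The $8$-$16$ loop develops to the $\wD_{10}\wE_7$ fiber of the fibration double‑covering the $8,16$-ruling, of class $n(\wD_{10})=n(\wE_7)=n_{8,16}$. Passing to the second Symington model $\ov{\oT}$ as in Construction~\ref{con:ias} and Remark~\ref{rem:flip-gap}, so that one of the $I_1$ singularities is flipped next to side $12$, the $2$-$4$ loop encircles that singularity and develops to the $\wD_{16}\wA_1$ fiber, of class $n(\wD_{16})=n(\wA_1^*)=n_{2,4}$; hence the circumference is $\ev(n_{2,4})$.

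For the $3$-$12$ direction, and for the $2$-$4$ direction in the second model, the same conclusion can instead be extracted directly from the polytope, which is what the Remark preceding this lemma does: the segment realizing the lattice distance between sides $3$ and $12$ in $P$ passes through the $I_1$ singularity created by the almost‑toric blow‑up of size $b_{20}$ on side $12$, and that singularity cuts the segment into subsegments of lengths $b_{20}$ (to side $12$) and $b_{23}$ (to side $3$, the latter because of the relation $n(\wE_8)=n(\wA_1)$). Since $a_{20}=\pi^*L\cdot E_{20}=\pi^*L\cdot\pi^*F_{20}=2\,L\cdot F_{20}=2b_{20}$ and likewise $a_{23}=2b_{23}$, twice the distance, $2(b_{20}+b_{23})$, equals $a_{20}+a_{23}=\ev(n(\wA_1^\irr))$. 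For the $8$-$16$ loop, which avoids all three interior singularities $p_{18},p_{19},p_{20}$, I would instead note that the relevant width of $P$ equals that of the toric polytope $\oP$ between its parallel sides $8$ and $16$, namely $\oL$ times the fiber class of the $8,16$-ruling of $\oT$; pulling this class back along $\phi\colon T\to\oT$ and then along $\pi\colon S\to T$ doubles it and turns it into $\pi^*L\cdot n(\wD_{10})$, giving $\ev(n_{8,16})$.

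The step I expect to be the main obstacle is the matching: for each of the four loops, pinning down the exact Kodaira fiber — equivalently, the exact class $\sum c_i E_i$ — of which it is the tropicalization. For the equator this is immediate from the $I_{18}$ fibration, and for the $3$-$12$ direction the Remark above already supplies it; the remaining two require the full inventory from Section~\ref{sec:mirror-k3} of the six elliptic fibrations on $S$, their Kodaira fiber types, and which rulings of $\oT$ (or of $\ov{\oT}$) they double‑cover. Lemma~\ref{lem:N-linear-rels} is then exactly what permits writing the answer in whichever of the equivalent forms is cleanest, since it guarantees the value of $\ev$ is the same whichever component of the parabolic diagram one evaluates.
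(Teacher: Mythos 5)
Your third paragraph is, in substance, the paper's entire proof: the authors dispose of this lemma with the single line ``This follows by observation using Lemma~\ref{lem:N-linear-rels}'', meaning exactly the direct polytope computation you carry out for the $3$--$12$ and $8$--$16$ directions (the singularity over side $12$ splits the vertical segment into pieces of lattice lengths $b_{20}$ and $b_{23}$; the $8$--$16$ width is unaffected by the surgeries and equals $\oL$ paired with the fiber of the $8$,$16$-ruling of $\oT$; and Lemma~\ref{lem:N-linear-rels} rewrites everything as $\ev$ of the appropriate isotropic vector). So for those two cases your argument is complete and correct, and matches the intended one. Your first two paragraphs take a genuinely different route, through visible surfaces and the Kodaira fibers of the elliptic fibrations on the mirror $S$. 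This is heavier machinery, but it has a real payoff: it treats all four cases uniformly, and it is the only reading under which the equator claim comes out literally --- the naive lattice length of $\partial P$ is $\sum_{i<18}b_i$, not $\sum_{i<18}a_i=\ev\big(n(\wA_{17})\big)$, so ``circumference along the equator'' must be understood as the pairing of $[\omega]$ with the class of the $I_{18}$ fiber (equivalently, the affine length of each side measured against the monodromy-invariant transversal at its endpoints), which is precisely what you compute.

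Two concrete repairs are needed in that visible-surface route as written. First, the opening sentence is wrong as stated: if $\gamma$ carries its \emph{primitive integral tangent} vector field, then $\Sigma_\gamma$ is Lagrangian and $[\omega]\cdot[\Sigma_\gamma]=0$. To recover the length you need a transversal section $\alpha$ with $\det(\alpha,\gamma')=1$ --- this is exactly how the proof of Theorem~\ref{thm:monodromy2} obtains $[\omega]\cdot E_i=a_i$ from \emph{non-Lagrangian} visible surfaces --- and for a closed loop you must also check that such an $\alpha$ exists globally, i.e.\ that the affine monodromy along $\gamma$ fixes a transversal vector; that verification is where the identification of $\gamma$ with a fiber of an elliptic fibration actually does work. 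Second, the matching of each loop with a specific fiber class $\sum c_iE_i$ is asserted rather than proved (you flag this yourself), and for the $2$--$4$ loop in the second presentation this matching is essentially the whole content of the claim. Neither issue is fatal: the direct computation of your third paragraph, extended to the remaining two cases, is shorter and is what the paper intends, while the mirror-fibration picture is a correct and illuminating gloss once the transversal-field point is fixed.
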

\begin{proof}
  This follows by observation using Lemma~\ref{lem:N-linear-rels}.
\end{proof}

\begin{example}
  In the example of Fig.~\ref{fig:ias} all the $a_i=2$ for
  $i\ne 21,22,23$. It follows and is indeed very amusing to observe
  that the projections of sides 13, 14, 15, 16, 17, 0, 1, 2 and 18 to
  a vertical line have lattice lengths 1, 2, 3, 4, 5, 6, 4, 2 and 3,
  which are the multiplicities of the simple roots in 
  $n(\wE_8)$.  Similarly, the projections of the sides 18, 17, 0,
  \dots, 6, 7, 19 to a horizontal line have lattice lengths 1, 1,
  2, \dots, 2, 1, 1, which are the multiplicities for $\wD_{10}$, and
  similarly for $\wE_7$. 
\end{example}

\begin{corollary}
  Near the rays $L^2=0$ of $\fund=\Nef(S)$ the sphere $B$ with its
  integral-affine structure degenerates to an interval as follows:
  \begin{enumerate}
  \item $\wA_{17}$. The Symington polytope $P$ degenerates to a
    segment from the boundary of $P$ to the north pole, and $B$
    degenerates to a longitude.
  \item $\wD_{10}\wE_7$. Both $P$ and $B$ degenerate to the side $8$,
    identified with the side $16$.
  \item $\wE_8^2\wA_1$. Both $P$ and $B$ degenerate to the side $3$,
    identified with the side $12$.
  \item $\wD_{16}\wA_1$. Both $P$ and $B$ degenerate to the side $2$,
    identified with the side $4$.
  \end{enumerate}
  In the cases (2,3,4) the interval lies in the equator.
\end{corollary}

\begin{definition}\label{def:family-ias}
  We define the family of $\ias$ over the fundamental chamber $\fund$
  by Construction~\ref{con:ias}.  By Lemma~\ref{lem:ias-volume}, this
  gives a family outside of the boundary rays with $L^2=0$, where
  $\ias$ degenerates to an interval.
  
  We then extend it to a family over $\opC$ by reflections in the Weyl
  group $W(N)$. This is well defined because $\fund$ is a fundamental
  domain of the reflection group and because on the boundary of $\fund$
  where some $a_i=0$ the limits of the structures from both sides coincide.
\end{definition}

\begin{remark}
  As we mentioned, the locations of the cuts on the sides 0, 6, 12 are
  quite arbitrary and may be moved by a ``nodal slide''.
  Instead of the symmetric choice for the cuts, one could also make a
  ``vertex-preferred" choice: For this
  choice, if $b_i$ are integral then the coordinates of the three
  internal singularities are also integral.  For the symmetric choice
  they are only half-integral.
  
  This is quite similar to the case of abelian varieties where, given
  an integral positive-definite symmetric bilinear form
  $B\colon M\times M\to \bZ$ on $M\simeq\bZ^g$, the Voronoi
  decomposition $f_B(\Vor B)$ in $N_\bR=M^*_\bR$ has only
  half-integral coordinates but in low dimensions there is a
  ``vertex-preferred'' 
  linear shift $\ell$ so that $\ell + f_B(\Vor B)$ has
  vertices in the lattice $N=M^*$.
\end{remark}

\subsection{Collisions of singularities in $\ias$}
\label{sec:sings-ias}

We now describe how the 24
singularities collide and the resulting singularities of the
integral-affine structures.

\begin{theorem}\label{thm:collapsing-ias}
  For a big and nef class $L\in\Nef(S)$, the possibilities for
  the collisions of the 24 singular points are in bijection with the
  elliptic subdiagrams $G$ of the Coxeter diagram
  $G_\vin$, excepting \eqref{num:exceptions}.
  Each collision point, excepting \eqref{num:exceptions}, is in bijection with
   a connected component $G_k$ of $G$.
\end{theorem}
\begin{proof}
  In \eqref{lem:nef-cone} we made an identification of $\Nef(S)$ with
  the fundamental chamber~$\fund$. Now we simply apply
  Theorem~\ref{thm:faces-fund-chamber}. With the noted insignificant
  exceptions, the collisions correspond to the collections
  of indices $\{i \mid a_i=0\} \subset \{0,\dotsc, 23\}$, i.e. to the
  faces of $\fund$, by virtue of Construction \ref{con:ias}.
\end{proof}

\begin{num}\label{num:exceptions}
  The exceptions, which play no role in the end, occur as artifacts
  of the ``symmetric choice" of cuts for the Symington polytope $P$. A collision 
  is insignificant if a different choice of cut would get rid
  of the collision, for instance, when two cuts are made that have the same
  apex in the interior of $P$.
\end{num}

\begin{lemma}
  The singularities appearing $B(\vec{a})$ as some collection of $a_i\to 0$ for
  $i\in G_k$ (such subdiagrams are listed in Table \ref{tab:vin-conn-subdiags})
  are exactly those listed in Table~\ref{tab:ias-sings-without-ram} with the same Dynkin
  label.
\end{lemma}	

Decorations $-$ from Table \ref{tab:vin-conn-subdiags}
are dropped in Table~\ref{tab:ias-sings-without-ram} as they do not affect 
the resulting integral-affine singularity.
 
\begin{proof}
A singularity resulting from a collision as $a_i\to 0$ is determined by (and in fact,
is defined by, see Definition \ref{intaffsing}) tracking the monodromy directions of the $I_1$ singularities as they
coalesce. This presents the coalesced singular point in the form $I(n_1\vec{v}_1,\dots,n_k\vec{v}_k)$.
The results are determined by direct geometric examination of Fig.~\ref{fig:ias},
and tabulated in Table \ref{tab:ias-sings-without-ram}.

For example, the $E_8$ diagram formed from nodes $i\in \{18,16,17,0,1,2,3,4\}$ of $G_\cox$
corresponds to the coalescence where these lengths $a_i$ all approach zero. This results in the collision
of $10$ total $I_1$ singularities. Choosing monodromy-invariant cut directions for each singularity
in a counterclockwise fashion about the center
of edge 0 (like a windmill) and letting $a_i\to 0$, we see that this collision can be presented
as $I(5,1,3,1)\sim I(2,3,5)$, which is the ``$E_8$" integral affine singularity.
\end{proof}

\begin{table}[H]
  \centering
  \begin{tabular}{llllll}
    Definition&
               Name
                &Charge 
                \\
    $I(n+1)$  &$A_{n}$  &$n+1$ 
    \\
    $I(2,2,n-2)$ &$D_n$
    
    &$n+2$ 
    \\
    $I(2,3,3)$ &$E_6$ 
    &$8$ 
    \\
    $I(2,3,4)$ &$E_7$ 
    &$9$ 
    \\
    $I(2,3,5)$ &$E_8$ 
    &$10$ 
    \\
    $I(2,3,n-3)$ &$D'_{n-1}$ &$n+2$ 
    \\
    $I(n+1,1)$ &$\pA_n$ &$n+2$ 
    \\
    $I(n,n,2)$ &$\pA'_{2n-1}$ &$2n+2$ 
    \\
  \end{tabular}\vspace{4pt}
  \caption{Possible integral-affine singularities on $B(\vec{a})$ for some $\vec{a}\in \fund$}
  \label{tab:ias-sings-without-ram}
\end{table}


\subsection{Polarization of the $\ias$}\label{sec:ias-polarization}
We now define the polarizing divisor $R_{\ia}$
on $B$, when some singularities have collided.
By Definition \ref{IAdivisor} and Remark \ref{generic},
the data of $R_{\ia}$ is specified by a nef line bundle $L_i$
on an anticanonical pair $(V_i,D_i)$ for which $\mathfrak{F}(V_i,D_i)$
models each integral-affine singularity. This line bundle
is furthermore required to have intersection numbers $n_{ij} = L_i\cdot D_{ij}$ 
agreeing with the weighted balanced graph in $B$.

The graph underlying $R_{\ia}$ is supported on
the equator and has exactly two nonzero weights $n_{ij}\in \{1,2\}$
emanating from an equatorial vertex $v_i\in \mathfrak{F}(V_i,D_i)$.
These weights are notationally incorporated into 
the decorations of the corresponding Dynkin subdiagram $G_k$ by the $-$ and $'$
decorations, see the discussion following Table \ref{tab:vin-conn-subdiags}.
For each singularity, we must give an anticanonical pair $(V_i,D_i)$
in the c.b.e.c. describing the singularity, and the appropriate
line bundle $L_i\to V_i$.

\begin{theorem}\label{eq-invo}
Let $\iota_{\ia}$ be the involution of $B$ switching the hemispheres $P, P^\op$ and fixing
the equator pointwise. For each singularity $v_i$ on the equator of $B$,
consider the deformation class of ``involution pairs" $(\oV_i,\oD_i+\epsilon \oR_i)$, see
\cite{alexeev17ade-surfaces} and Section \ref{sec:ade-wade-surfaces},
notated in {\rm loc. cit.} by exactly the same decorated Dynkin symbol
of the corresponding subdiagram, listed in Table \ref{tab:vin-conn-subdiags}.

Let $\overline{\iota}_i$ be the involution, so $\oR_i = {\rm Fix}(\overline{\iota}_i)$, and let
$\iota_i$ be the induced involution on the minimal resolution $\pi_i\colon (V_i,D_i)\to (\oV_i,\oD_i)$.
Then, $v_i=\mathfrak{F}(V_i,D_i)$ as integral-affine singularities,
and $\iota_i$ induces the same the action as $\iota_{\ia}$.
Furthermore, denoting $R_i=\pi_i^*(\oR_i)$,
the nef line bundle $L_i:=\mathcal{O}_{V_i}(R_i)$
has intersection numbers $n_{ij}=L_i\cdot D_{ij}$ which give the weighted
graph on the equator described in Definition \ref{def:ias-voronoi}.
 \end{theorem}

\begin{proof} Essentially, the proof is by direct calculation
of all the cases. We simply check
that $\mathfrak{F}(V_i,D_i)$ is the correct integral-affine
singularity, and $L_i\cdot D_{ij}$ are the correct values.
We perform this check below for some representative examples.
\end{proof}

\begin{remark} The proposition should not come as a surprise---the notation
for subdiagrams of $G_\cox$ was reverse-engineered so that Theorem
\ref{eq-invo} becomes true. \end{remark}

For notational convenience, we drop the index $i$.

\begin{example}[$\mA_0$ and $A_0^-$]\label{example0} $(V,D)=(\mathbb{P}^2,D_1+D_2)$ is a projective plane
with a line $D_1$ plus conic $D_2$ as anticanonical divisor. The singularity of $\mathfrak{F}(V,D)$
is an $I_1$ singularity, and the degrees for $R_\ia$ must be $1$, $2$ on the components $D_1$, $D_2$ corresponding to equatorial edges, respectively.

The pair $(V,D)$ admits an involution $\iota$ fixing another line $R$,
and an isolated point on $D_1$. The line bundle $L=\cO_V(R)=\cO_{\mathbb{P}^2}(1)$, 
which gives the correct multiplicities $1,2$ on the two equatorial edges of $R_\ia$. The two
cases $\mA_0$ and $A_0^-$ are, respectively, distinguished by whether the line is
on the left (clockwise) or right (counterclockwise) side of the equator.
\end{example}

\begin{example}[$A_{2n-1}$]\label{example1} As the singularity is $I(2n)=I(n,0,n,0)$,
see Remark \ref{allpseudo}, 
we can model the c.b.e.c. as the blow-up $$(V,D)=(V,D_1+D_2+D_3+D_4)\to (\bP^1\times\bP^1, s_1+f_1+s_2+f_2),$$
at $n$ points on $s_1$ then $n$ points on $s_2$. The edges corresponding to the
equator of $B$ correspond to the two fibers $f_1$, $f_2$ and are required
to intersect the polarization with degree $2$. There is an involution
$\iota\colon (V,D)\to (V,D)$ preserving $f$ and switching the strict transforms of $s_1$
and $s_2$, which have classes $D_1= s-\sum_{i=1}^n e_i$ and
$D_3= s-\sum_{i=n+1}^{2n} e_i$. Here $e_i$ are the
  exceptional divisors.

Assuming the points blown up are chosen generically,
the ramification divisor of $\iota$ is the strict transform of the divisor on
$\bP^1\times\bP^1$ in the linear system $|2s+nf|$ which passes through
all the $2n$ points. It has the class
$R = 2s+nf - \sum_{i=1}^{2n} e_i$ with $R^2 = 2n$.
The line bundle $L=\mathcal{O}_V(R)$
has intersection numbers $L\cdot D_2=L\cdot D_4 =2$
and $L\cdot D_1 = L \cdot D_3=0$ with the boundary. Thus, it gives
the correct intersection numbers for $R_{\ia}$ as it passes
through an $I(2n)$ singularity on the equator.

Finally, the stable model $\oV$ is the result of contracting
$D_1$ and $D_3$ which are the only curves on which $L$ has degree $0$.
The involution descends and defines the $A_{2n-1}$ involution pair
from \cite{alexeev17ade-surfaces}.
\end{example}

\begin{example}[$\pA'_{2n-1}$]\label{example2}
  As the singularity is $I(n,n,2)=I(n,1,n,1)$, the c.b.e.c. is represented
  by blowing up an $A_{2n-1}$ pair at one point on each of $f_1$, $f_2$
  with the exceptional classes $g_1$, $g_2$.
  We blow up at a point in $R\cap f_1$ and $R \cap f_2$ respectively.
  So the resulting pair $(V,D_1+D_2+D_3+D_4)$ still admits an involution
  $\iota$ lifting the involution of the $A_{2n-1}$ pair.
  
  The boundary curves have classes
  $D_1= s-\sum_{i=1}^n e_i$, $D_3= s-\sum_{i=n+1}^{2n} e_i$,
  $D_2= f-g_1$, $D_4= f-g_2$ and $R = 2s+nf-\sum_{i=1}^{2n} e_i - (g_1+g_2)$.
  The polarization is defined to be
  $L = \mathcal{O}_V(R)$ and note that $L\cdot D_2 = L\cdot D_4 = 1$
  and $L\cdot D_1=L\cdot D_3=0$ as desired.
  The stable model is again the result of contracting $D_1$ and $D_3$
  and gives the $\pA'_{2n-1}$ involution pair.
\end{example}

\begin{example}[$D_{2n}$]\label{example3}
  The easiest model for $D_{2n}$ is a pair $(V,D_1+D_2)$, whose components
   are a fiber $f$ and the $2n$-fold blow-up of
   a bisection in class $2s+f$, on $\bP^1\times\bP^1$. Taking some corner blow
  ups and a toric model, one finds the pseudo-fan is
  $\mathfrak{F}(V,D)=I(2,2,2n-2)$ as desired, and that $D_1$ and $D_2$
  correspond to the edges emanating from $v$ along the equator.
  
  There is an involution $\iota$ preserving $f$ and switching the two sheets of 
  the bisection. Its ramification divisor has class
  $R = 2s + nf - \sum_{i=1}^{2n} e_i$, $R^2 = 2n$. Setting $L=\cO_V(R)$, one has
  $L\cdot D_1 = L\cdot D_2 = 2$ as desired. In this case, $L$ is already ample,
  so the stable model is the same surface and it is the $D_{2n}$ involution pair.
\end{example}

\begin{example}[$D'_{2n}$]\label{example4}
  The $D'_{2n}$ surface is obtained from the $D_{2n}$ surface by an
  additional blowup at one of the two points $D_1\cap R$. This gives
  the singularity $I(2,3,2n-2)$, which is the same as for $E_n$, but
  in these two cases, the equator sits differently with respect to the shearing rays. The involution
  on the $D_{2n}$ pair lifts to give an involution.
\end{example}

\begin{example}[$E_n$]\label{example5} For $E_8$, the singularity 
is $I(2,3,5) = I(5,1,3,1)$, which can be represented by blowing up $5$, $1$,
$3$, $1$ points respectively on the four sides of an anticanonical square in
$\mathbb{P}^1\times \mathbb{P}^1$. Contracting the two
boundary exceptional curves gives the blow-up of a nodal cubic in $\mathbb{P}^2$
at $8$ smooth points, and at the node. Then $R$ is the fixed locus
of the Bertini involution, which intersects each boundary component with 
degree $1$, as desired. Here $L=\cO(R)$ is already ample.

For $\phmi E_7$ there are 7 blowups
  on the cubic and an additional blowup at the node, and for
  $\phmi E^-_6$ there are two more blowups at the node. The involution
  is the Geiser involution. See \cite[\S4.5]{alexeev17ade-surfaces} for
  more details.
\end{example}

\begin{remark}\label{ram-not-r} In Examples
 \ref{example1}, \ref{example2}, \ref{example3},  \ref{example4},
 \ref{example5}, the description of $R$ as ${\rm Fix}(\iota)$ is valid
only when the blow-ups are chosen generically.
This is because as we vary the blow-up points on a smooth anticanonical pair,
the fixed loci ${\rm Fix}(\iota)$ do not vary in a flat manner. 
The resolution of this issue is to work with the ADE surfaces of \cite{alexeev17ade-surfaces},
on which $\oR_i={\rm Fix}(\overline{\iota}_i)$ {\it does} vary in a flat manner. Then, the pullback
$R_i$ to the minimal resolutions also varies in a flat manner.

The same phenomenon occurs even for smooth degree $2$ K3 surfaces
acquiring a $(-2)$-curve,
such as the minimal resolution of a double cover of $\mathbb{P}^2$ ramified
over a nodal sextic. The divisor $R$ on such a smooth K3 is {\it not}
${\rm Fix}(\iota)$ but rather the pullback of ${\rm Fix}(\overline{\iota})$ for the involution
$\overline{\iota}$ of the ADE K3 surface.
    \end{remark}
 
\begin{proposition}[Reconstructing the polarization]\label{reconstructing}
The line bundles $L_i$ defining the polarization $R_\ia$ at a singularity $v_i=\mathfrak{F}(V_i,D_i)$
are uniquely characterized by:
  \begin{enumerate}
  \item the intersection numbers $n_{ij}=L_i\cdot D_{ij}\in \{0,1,2\}$,
  \item the $\iota_i$-invariance of the class of $L_i$,
  \item $L_i^2=$ the number of equatorial $I_1$ singularities
  involved in the collision.
  \end{enumerate}
\end{proposition}

\begin{proof} As for Theorem \ref{eq-invo}, this simply requires a direct check in all cases.\end{proof}

This completes the construction of a family $(B(\vec{a}), R_\ia)$ of polarized $\ias$,
varying over $\oC$, which is combinatorially constant exactly along the cones of $\mathfrak{F}_\cox$.

\subsection{Kulikov degenerations and their monodromy}

The goal of this section is to verify that the monodromy invariant
of a Kulikov model $\cX\to (C,0)$ whose central fiber satisfies
$\Gamma(\cX_0)=B(\vec{a})$ is in fact $\vec{a}\in \fund$.

\begin{definition}[The parity condition] \label{def:parity} We say
  that $\vec{a}\in\bZ^{24}$ {\it satisfies the parity condition} if
  $a_i\equiv 0\textrm{ mod }2$ for $i$ odd, and all $i\geq
  18$. Equivalently that all $b_i\in\bZ$.
\end{definition}

Let
$N=H\oplus E_8^2\oplus A_1$ be our standard lattice of signature
$(1,18)$ as in Section~\ref{sec:reflection-fan}. For each vector
$\vec a\in\bZ^{24}_{\ge0}$ coming from an integral point in $\fund$ and satisfying the
parity condition, we define a combinatorial type of
polarized Kulikov surface. Then we prove that a Kulikov
degeneration with this central fiber has the
monodromy ~$\lambda$.

\begin{construction}\label{con:ias-to-kulikov3}
  Suppose that $\vec{a}\in \fund$ satisfies the parity
  condition, so that $B(\vec{a})$ has singularities only at integral points.
  Let $\iota_\ia$ be the orientation-reversing involution of
  $B(\vec{a})$ which switches $P$ and $P^{\op}$, fixing the equator
  pointwise. Choose an $\iota_\ia$-invariant triangulation $\cT$ of
  $B(\vec{a})$ into triangles of lattice volume $1$ which contains the
  equator $R_\ia$ as a set of edges.
  
  We now apply Proposition \ref{spheretok3} to produce a Kulikov surface
  $\cX_0=\bigcup_i\,(V_i,D_i)$ for which $\mathfrak{F}(V_i,D_i)=\Star(v_i)$
  as an integral-affine surface, and $\Gamma(\cX_0)=B(\vec{a})$. This specifies
  a unique deformation type of $\cX_0$ but not its continuous moduli.
  
  To choose from the continuous moduli, first, we pick an anticanonical
  pair $(V_i,D_i)$ on the equator admitting an involution $\iota_i$ which
  induces $\iota_\ia$ on $\mathfrak{F}(V_i,D_i)$. This is possible by Theorem \ref{eq-invo}.
  Then, we glue the equatorial edges of $\cX_0$ by ensuring that $R_i$ glue
  into a Cartier divisor, i.e. $R_i\cap D_{ij} = R_j\cap D_{ji}$ as multisets.
  Finally we glue the northern and southern hemispheres of $\cX_0$
  onto this equatorial band of surfaces, in an arbitrary involution-invariant manner.
\end{construction}

  The resulting Kulikov surface $\cX_0$ admits an involution which we
  denote $\iota_0$ and which acts on $\Gamma(\cX_0)$ by $\iota_\ia$.
  Furthermore, by construction there is a Cartier divisor $R\subset \cX_0$ 
  given by $R=\cup_i R_i$.
  We show in Section \ref{sec:defs-kulikov-invo} that it is possible to 
glue so that this involutive surface $\cX_0$ is also $d$-semistable (see Section
\ref{subsec:kulikov}), but for the moment, assume this. In particular, $\mathcal{X}_0$ is smoothable by
\cite{friedman1983global-smoothings}.

\begin{definition} We write $\cX_0(\vec{a})$ for the Kulikov
surface defined in Construction \ref{con:ias-to-kulikov3} and $\cX(\vec{a})$
for a smoothing of it. \end{definition}

\begin{theorem}\label{thm:monodromy2}
  Let $\vec{a}$ satisfy the parity condition and suppose $B(\vec{a})$
  is generic.
\begin{enumerate}
\item Let $\cX(\vec{a})\to (C,0)$ be a Kulikov degeneration defined above.
\item Let $\mu\colon(S,\omega)\to B(\vec{a})$ be a Lagrangian torus
  fibration over $B(\vec{a})$.
\item Let $\phi\colon S\to \cX_t$ be the diffeomorphism of Theorem
  \ref{thm:monodromy}.
\end{enumerate}
Define $v:= \phi_*[\Sigma_{\ia}]\in \delta^\perp/\delta$ where $\Sigma_{\ia}:=\Sigma_{R_\ia}$. Then
$\{v,\delta\}^\perp/\delta$ is isometric to $N$ and the monodromy
invariant is $\lambda = \vec{a}\textrm{ mod }O^+(N).$
\end{theorem}

\begin{proof}
  By construction of $\phi$, we have that
  $\phi_*[\Sigma_\ia]\in \delta^\perp/\delta$ is invariant under the
  Picard-Lefschetz transformation, hence perpendicular to the
  monodromy invariant $\lambda$. So
  $\lambda\in \{v,\delta\}^\perp/\delta$.

  We describe a collection of $24$ spheres $\{E_i\}$ of
  self-intersection $-2$ in $(S,\omega)$, which intersect according to
  the Coxeter diagram $G_{\vin}$. They are all presented as {\it
    non-Lagrangian}
  visible surfaces. Let $\gamma_i$ for
  $i=0,\dots,17$ be the $i$-th edge of $P$. Then, the
  monodromy-invariant vectors $\alpha_i$ at the two endpoints of
  $\gamma_i$ are parallel. By Construction \ref{visibleconstruction}
  and Example \ref{minus-two-visible},
  there is a visible surface $$E_i:=\Sigma_{(\gamma_i,\alpha_i)}$$
  fibering over $\gamma_i$. Now, let $i=18$ (resp. $19,20$). Define
  $\gamma_i$ as a path which connects the singularity of $P$ over the
  edge $0$ (resp. $6,12$), to the mirror singularity in $P^{\op}$,
  crossing the edge $0$ (resp. $6,12$). As before, let
  $E_i:=\Sigma_{(\gamma_i,\alpha_i)}$ where $\alpha_i$ is the (common)
  vanishing cycle at the two endpoints of $\gamma_i$. Finally, we
  define $E_i$ for $i=21,22,23$ similarly, but this time connecting
  the singularity in $P$ to the mirror one in $P^{\op}$ via a path
  $\gamma_i$ which crosses the edge $9$ (resp. $15,3$). It is an easy
  check that if the $E_i$ are properly oriented, the intersection
  numbers $E_i\cdot E_j$ give exactly a system of roots as in
  $G_{\vin}$.

  We directly compute by perturbing and counting signed intersection
  points that $[\Sigma_\ia]\cdot [E_i] = 0$. Since the classes $[E_i]$
  generate a lattice of determinant $2$ and rank $19$, we conclude
  that $\phi_*[E_i]$ generate the rank $19$ lattice
  $\{\delta,v\}^\perp/\delta$ over $\Z$ and that this lattice is
  isometric to $N$, with the isometry identifying $\phi_*[E_i]$ and
  $r_i$.

  Finally, we wish to show $\lambda$ and $\vec{a}$ define the same
  element of $N$ modulo $O^+(N)$. We have the following formula
  for the symplectic area of a visible surface: \begin{align*}
    [\omega]\cdot E_i = \int_0^1 \det(\alpha_i ,\gamma_i'(t))\, dt& =
    a_i \end{align*} for all $i$. Hence $\lambda\cdot \phi_*[E_i]=a_i$
  for all $i$. This shows that $\lambda$ and $\vec{a}$ represent the
  same lattice point in $\fund$, i.e.
  $\lambda = \vec{a}\textrm{ mod }O^+(N)$. \end{proof}

\begin{corollary}\label{multiple3}
  The vector $v\in \delta^\perp/\delta$ is imprimitive with $v= 3w$
  and $w^2=2$.
\end{corollary}

\begin{proof}
  By taking a generic perturbation of $\Sigma_{\ia}$ off itself and counting
  signed intersections, we see that $v^2=18$. Also, $v$ lies in
  span$\{\phi_*[E_i]\}^\perp\subset \delta^\perp/\delta$, a
  one-dimensional lattice of determinant $2$, which is necessarily
  generated by a vector $w$ with $w^2=2$. Hence $v=3w$.
\end{proof}

\begin{theorem}\label{thm:monodromy3} Theorem \ref{thm:monodromy2} holds,
even when $B(\vec{a})$ is not generic. \end{theorem}

\begin{proof} The primary issue with the proof of Theorem \ref{thm:monodromy2} 
in the non-generic case is that there is no smooth Lagrangian
  torus fibration $\mu\colon (S,\omega)\to B(\vec{a})$ when $B(\vec{a})$ has more complicated
  singularities such as $\pA_{2n-1}'$. So we cannot directly apply Theorem \ref{thm:monodromy}.

  Let $\vec{a}(t)$ be a one-parameter family over
  $t\in [0,1]$ such that $a_i(t)>0$ for all $t>0$ and $a_i(0)$ results
  in a collision of $I_1$ singularities. Let $N>0$ be a large integer.
  For all $t$, let $B'(\vec{a}(t))$ be the result of performing nodal slides (Def. \ref{nodal-slide})
  of fixed length lying in $N^{-1}\Z$, to every singularity involved in a collision. Then,
  as $t\to 0$, the singularities no longer collide, and rather the collection of singularities
  of $B(\vec{a})$ are factored into $I_1$ singularities by the nodal slides. Let
\begin{align*}
  \mu(t)\colon (S(t),\omega(t))
  & \rightarrow B(\vec{a}(t))\textrm{ for }t\in(0,1] \\
  \mu'(t) \colon (S'(t),\omega'(t))
  &\rightarrow B'(\vec{a}(t))\textrm{ for }t\in[0,1]
\end{align*}
be the corresponding families of almost toric fibrations.
The fibration $\mu(0)$ doesn't exist unless $B(\vec{a})$ has all
unprimed singularities, but $\mu'(0)$ does. Define
$$\sigma(t)=[\Sigma_\ia]\in H^2(S(t),\Z).$$
The $\sigma(t)$ are identified under the
Gauss-Manin connection on the fiber bundle $S(t)\to (0,1]$. Define $\sigma'(t)$
by parallel transport of $\sigma(t)$ along the nodal slide connecting
$B(\vec{a}(t))$ to $B'(\vec{a}(t))$. It is easy to see that
$\sigma'(t)$ is also represented by a visible surface $\Sigma'_\ia(t)$
which fibers over $R_\ia(t)$ and the segments along which the nodal
slides were made. Since $\mu'(0)$ exists (as the $I_1$ singularities
no longer collide) we have that $\sigma'(0)=[\Sigma'_\ia(0)]$ is the
parallel transport of $\sigma'(t)$.

As the slide parameters lie in $N^{-1}\Z$, these parameters are integral on
the order $N$ refinement.
So $B'(a(0))[N]$ admits a triangulation into
lattice triangles. By Proposition \ref{spheretok3},
we get a Kulikov degeneration
$\mathcal{X}'[N]\rightarrow (C,0)$ such that
$\Gamma(\mathcal{X}_0'[N])=B'(a(0))[N]$.

The nodal slides
destroy the involution symmetry of $B(a(0))[N]$ and any chance of
$\mathcal{X}$ having an involution. But we may apply the first half of
Theorem \ref{thm:monodromy} to $B'(a(0))[N]$ to conclude that the
vanishing cycle is identified with $[\mu'(0)^{-1}(p)]$ and the
monodromy invariant $\lambda'[N]$ is identified with
$N[\omega]'(0)$. Furthermore, the class $\phi_*\sigma'(0)$ is
invariant under Picard-Lefschetz, and the conclusion of Theorem
\ref{thm:monodromy2} $$\lambda'[N]= N\vec{a}(0)\textrm { mod }O^+(N)$$
holds by continuity: We have $[\omega(t)]=[\omega'(t)]$ because
nodal slides leave the class of the symplectic form invariant. Hence
$$[\omega'(0)]=\lim_{t\rightarrow 0}[\omega'(t)]=\lim_{t\rightarrow
  0}[\omega(t)]=\lim_{t\rightarrow 0}a(t) = a(0).$$

Integral length nodal slides correspond to M1 modifications
of $\mathcal{X}'[N]$ by Proposition \ref{nodal-slide-dual}.
Thus there is a sequence of such
modifications after which we have a Kulikov degeneration
$\mathcal{X}''[N]\rightarrow C$ satisfying
$\Gamma(\mathcal{X}_0''[N])=B(a(0))[N]$. After a series of M2
modifications corresponding to retriangulation
(again Proposition \ref{nodal-slide-dual}), we can produce a Type III degeneration
$\mathcal{X}[N]\rightarrow C$ such that the dual complex
is the standard order $N$ refinement of a triangulated $\ias$
$B(a(0))$. We conclude by Proposition
\ref{refine-base-change} that $\mathcal{X}[N]\rightarrow C$ is
in fact an order $N$ base change of a Kulikov degeneration
$\mathcal{X}\rightarrow C$ such that
$\Gamma(\mathcal{X}_0)=B(\vec{a}(0))$, whose vanishing cycle is the same, 
and whose monodromy-invariant $\lambda$ is $\vec{a}(0)$.

Furthermore, we have produced not just a class $\phi_*\sigma'(0)$, but
an actual surface $\phi(\Sigma'_\ia(0))$ on the general fiber
$\mathcal{X}_t$ (note the M1 and M2 modifications act trivially on
the punctured family). Under the Clemens collapsing map
$\mathcal{X}_t\rightarrow \mathcal{X}_0$ the surface
$\phi(\Sigma'_\ia(0))$ is pinched at the double curves to produce a
union of surfaces $R_i \subset V_i$ on the equator such that
$R_i \cap D_{ij} = R_j\cap D_{ij}$. 

We note that the involution
is restored in the limit $a_i\to 0$ when undoing the nodal
slides. The class $[R_i]$ is invariant under the involution on
an anticanonical pair of deformation type $(V_i,D_i)$. We also know the
values of $[R_i]^2$ and $[R_i]\cdot D_{ij}$ by continuity, so we can apply the
Reconstruction Proposition~\ref{reconstructing} to determine $R_i$ uniquely as the class
of the ramification locus.
\end{proof}

\subsection{An example: the $A_{18}'$ ray}
\label{sec:example}

Consider the $A_{18}'$ subdiagram of $G_{\vin}$
where $a_i=0$ for $i\in [18,0,1,\dots,16]$.  The corresponding cone
in $\fF^{\cox}$ is a ray. Take $\vec{a}$ to be twice the integral generator,
so that it satisfies the parity condition \ref{def:parity}. Then
$a_{17}=6$. Relations in $N$ determine
$(a_{19},a_{20},a_{21},a_{22},a_{23})=(10,8,30,14,22)$.
Recall from Section~\ref{sec:ias-over-coxeter} that $\vec{a}$
corresponds to line bundle $M$ in the nef cone $\textrm{Pic}\, S$ of
the mirror K3 satisfying $M\cdot E_i = a_i$. Letting
$\pi\colon S\to T$ be the double cover of the rational elliptic
surface, $M=\pi^*L$ where $L\cdot F_i = b_i$ with
$\vec{b} = (0,\dots,0, 3, 0,5,4,15,7,11)$. Then, we may further
blowdown $\phi\,:\,T\rightarrow \oT$ to the first ``6-6-6" toric
model. The values $\overline{b}_i=(\phi_*L)\cdot \overline{F}_i$ are
$\overline{b}_6=5$, $\overline{b}_{12}=4$, $\overline{b}_{17}=3$ with
all other $\overline{b}_i=0$.

\begin{figure}
  \begin{center}
    \includegraphics[width=200pt]{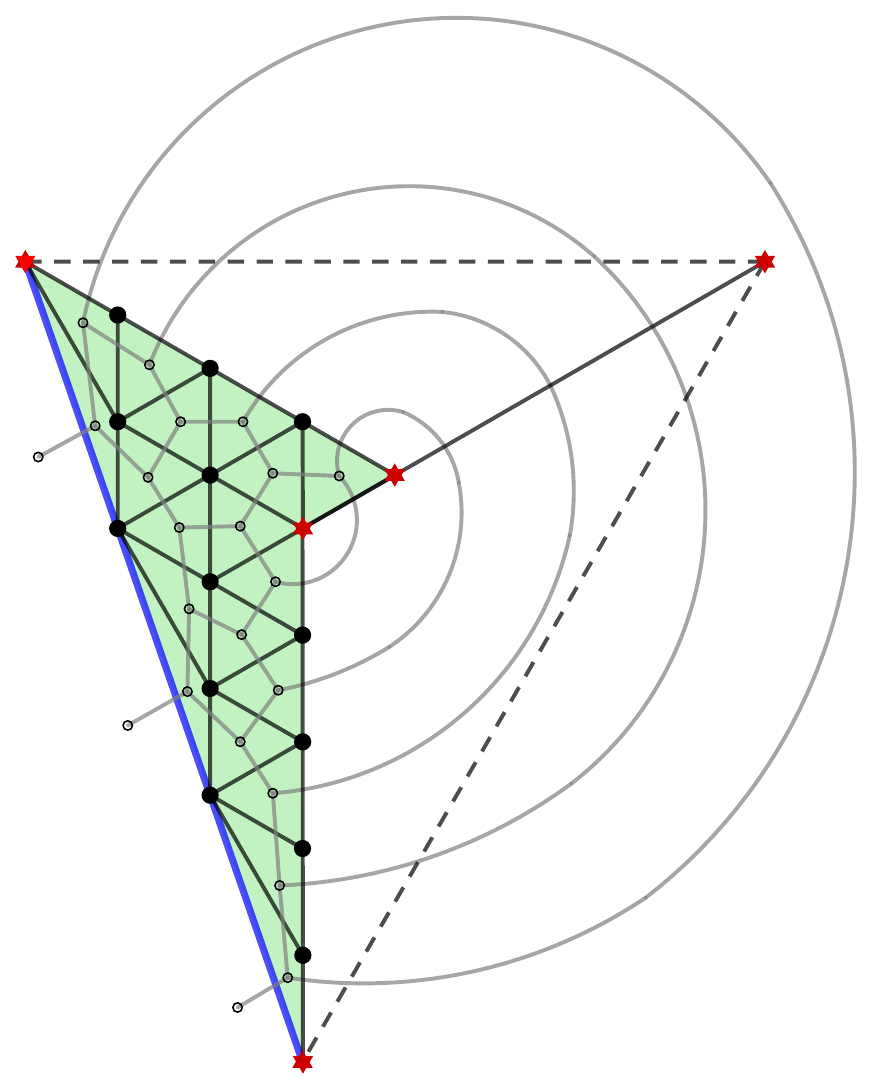}
    \caption{Above: $P(\vec{a})$ for the $\vec{a}\in \fund$ generating the $A_{18}'$ ray. Below:
    A Kulikov surface $\cX_0(\vec{a})$ with $\Gamma(\cX_0(\vec{a}))=B(\vec{a})$.} \label{fig:A18'} \vspace{5pt}
    \includegraphics[width=370pt]{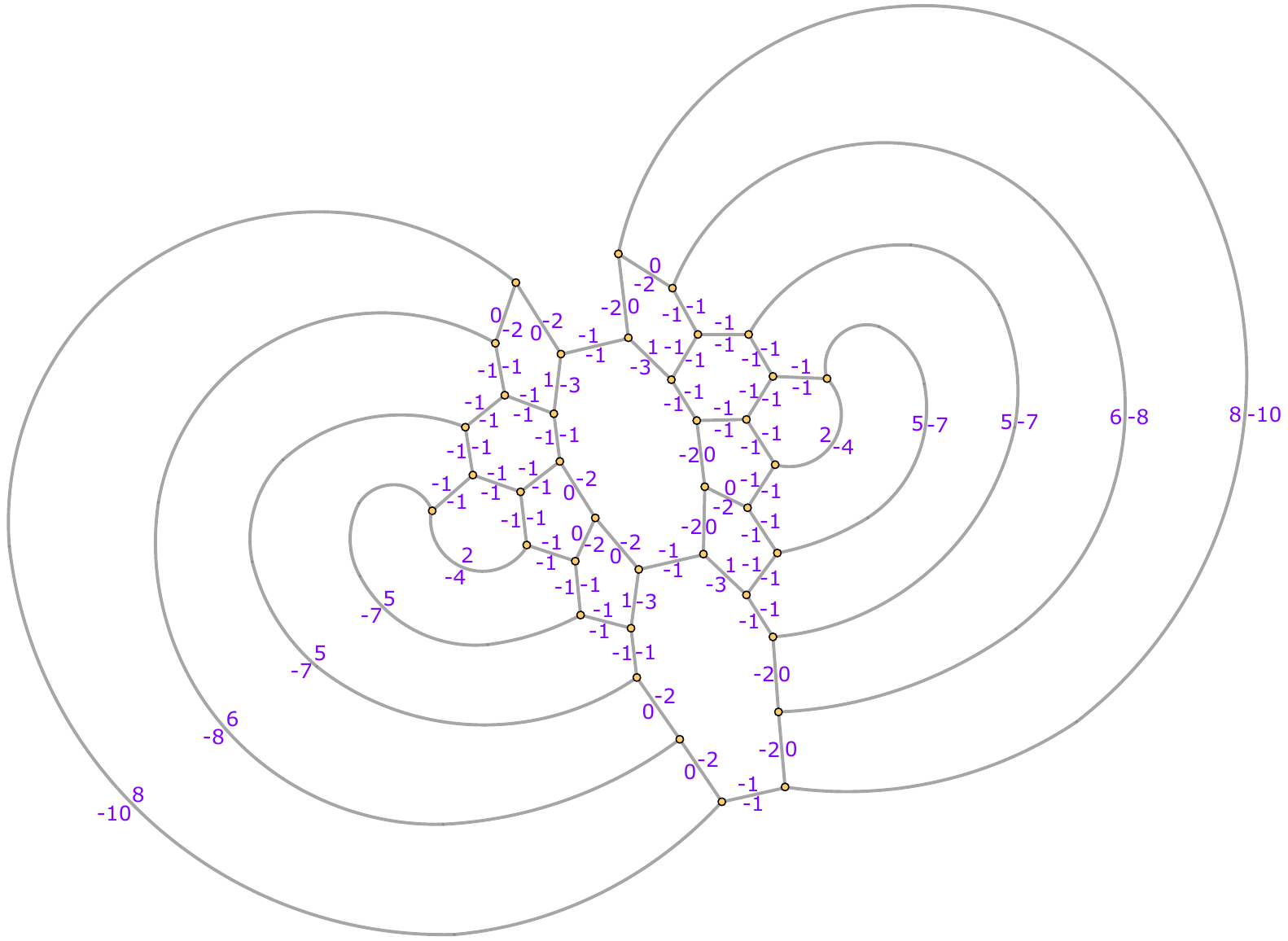}
  \end{center}
\end{figure}

Take a moment polygon of $\overline{T}$ with polarization
$\oL=\phi_*L$ and apply two Symington surgeries of size $5$ and $4$ on
the edges associated to $\overline{b}_6$ and $\overline{b}_{12}$
respectively, producing the green integral-affine disc $P(\vec{a})$ with blue
boundary depicted in the upper part of Figure \ref{fig:A18'}. We double the disc, so that the
blue edge becomes the equator of the $\ias$ $B(\vec{a})$.

We triangulate $B(\vec{a})$
into lattice triangles in an involution-invariant manner,
respecting the blue edge. The singular red stars and non-singular
black points form the vertices $v_i$. Finally, we interpret each vertex as
the pseudo-fan $\mathfrak{F}(V_i,\sum_jD_{ij})$ an anticanonical pair and glue according to the
combinatorics of the triangulation. The resulting Kulikov surface $\cX_0$ is shown in the lower part of Figure \ref{fig:A18'}, with double curves in
gray, self-intersections in purple, and triple points in
yellow. 

The line bundles $R_i$ are trivial on all but three components, those
along the blue line. On the outer component, $R_i$ is the fixed locus
on involution on a resolution of the type $A_{18}'$ involution
pair. On the two other equatorial components, $R_i$ is the fiber of a
ruling along the equator. These glue to form the Cartier divisor
$R$. Taking the image of a multiple of $\mathcal{O}_{\cX_0}(R)$, we
get the stable model: This contracts the northern and southern
hemispheres to a point, contracts a ruling on two equatorial
components, and is a birational morphism of the outer component to the
polarized $A_{18}'$ involution pair.

The resulting stable model is irreducible, and is the contraction of an
anticanonical pair with cycle of self-intersections
$(-10,-2,-1,-2,-10,-1)$ to a singular surface with two boundary
components glued. It has two cyclic quotient singularities at the
north and south poles whose resolutions are a chain of rational curves of
self-intersections $(-10,-2)$ and the images of the $(-1)$-curves which
are glued.

\subsection{Type II Kulikov models} It remains to determine
the Kulikov models corresponding to the rational cusps of $\fund$.

\begin{construction}\label{con:ias-to-kulikov2}
  To a vector $\vec a\ne 0\in \fund$ with $\vec a^2=0$ we associate a
  Type~II Kulikov surface $\cX_0(\vec{a})$ with an involution $\iota_0$
  and a stable surface $(\overline{\cX}_0(\vec{a}),\epsilon \oR)$.

   For the
  relevant connected parabolic diagrams we have the Type II $\wA\wD\wE$
  involution pairs $(X_k,D_k)$ of \cite{alexeev17ade-surfaces}  which glue
  to a stable surface $\overline{\cX}_0(\vec{a})$ with an involution
  $\overline{\iota}_0$ and fixed locus $\oR$.  For the diagrams $\wE_8^2\wA_1$,
  $\wD_{10}\wE_7$, $\wD_{16}\wA_1$ where there are two components, the
  elliptic curves $D_k$ must be isomorphic to a fixed $E$.

  Now we describe the Kulikov models. If $\vec a= m \vec a_0$ with primitive $\vec a_0$ then the dual 
  complex $\Gamma(\cX_0(\vec{a}))$ will be an interval $[0,m]$. A triangulation in this case
  is the subdivision into intervals of length~1.
  
  For $\wD_{10}\wE_7$ and $\wD_{17}\wA_1$, the surface $\cX_0(\vec{a})$ is constructed
 by taking the minimal resolution of each component of $\overline{\cX}_0(\vec{a})$
  and gluing these components, with a chain of $m-1$ $\mathbb{P}^1$-bundles over $E$
  inserted between them, like an accordion.

  In the $\wE_8^2\wA_1$ case we assume after an order $2$ base change
  that $m$ is even. At the ends we put the minimal resolutions of two $\wE_8$ involution pairs.
  We build a chain of surfaces as in the previous case, but on the middle component,
  we blow up a pair of points on the boundary of
  $\mathbb{P}^1\times E$ switched by the involution.
  This corresponds to the $\wA_1=\wA_1^\irr$ diagram.
      
    In the $\wA_{17}$ case, resolve the two simple elliptic singularities
   of the $\wA_{17}$ involution pair $\overline{\cX}_0(\vec{a})= (\oV,\overline{p}_1,\overline{p}_2)$
   to obtain a surface $(V,D_1+D_2)$ which
   is ruled over an elliptic curve with $18$ broken
   fibers, and whose anticanonical divisor $D_1\sqcup D_2$ is the disjoint
   union of two elliptic curves $E$ of self-intersection $-9$. 
   We again assume $m$ is even, and put the anticanonical pair $(V,D_1+D_2)$
  at the $m/2$ vertex. We add ruled surfaces over
  the same elliptic curve $E$ for the integral points
  $l\ne 0, m/2, m$ and cap both ends of the segment
  by the rational anticanonical pair $(\bP^2,E)$.
\end{construction}

\begin{remark}
  The Type II Kulikov models can also be constructed directly from the
  segment, together with the data of how it degenerated from an
  $\ias$; an analogue of the Monodromy Theorem \ref{thm:monodromy}
  likely holds. This requires first generalizing pseudo-fans to allow
  blow-ups of $E\times \mathbb{P}^1$, corresponding to the surfaces in
  the interior of the interval.  The ends of the interval are
  anticanonical pairs $(V,D)$ with $D$ smooth.  These obviously have
  no toric models, but can be constructed via {\it node smoothing
    surgeries}, cf. \cite[Prop.~3.12]{engel2018looijenga}. For example,
  in the $\wA_{17}$ case, the three surgery triangles consume all of
  $\overline{P}$. At the north pole, this dictates three node
  smoothing surgeries on the anticanonical pair
  $(\mathbb{P}^2,L_1+L_2+L_3)$, giving the pair
  $(\mathbb{P}^2,E)$, as in Construction
  \ref{con:ias-to-kulikov2}.
\end{remark}

\subsection{Deformations of Kulikov models with involution}
\label{sec:defs-kulikov-invo}

We now prove that the Kulikov surfaces $\cX_0(\vec{a})$ constructed:
\begin{enumerate}
\item can be made $d$-semistable and admit a smoothing into $F_2$, and
\item the union $R\subset \cX_0(\vec{a})$ of the curves $R_i\subset (V_i,D_i)$
from Theorem \ref{eq-invo} is the flat
limit of the ramification divisor. 
\end{enumerate}

Once these are demonstrated, we show:
\begin{enumerate}
\item[(3)] Every degeneration $C^*\to F_2$ admits a Kulikov limit
of the form $\cX_0(\vec{a})$ with $R$ the flat limit of the ramification
divisor.
\end{enumerate}

We first recall the basic statements about $d$-semistable Kulikov
surfaces from \cite{friedman1983global-smoothings,
  friedman1986type-III, laza2008triangulations,
  gross2015moduli-of-surfaces}. Let $\cX_0$ be a Type III Kulikov
surface with $v$ irreducible components $V_i$, $e$ double curves
$D_{ij}=V_i\cap V_j$, and $f$ triple points
$T_{ijk}=V_i\cap V_j\cap V_k$.
One defines an important lattice of ``numerical Cartier divisors''
\begin{displaymath}
  \widetilde{\Lambda} = \ker\big( \oplus_i \Pic V_i \to \oplus_{i<j} \Pic D_{ij} )
\end{displaymath}
with the homomorphism given by restricting line bundle and applying
$\pm1$ signs.
The map is surjective over $\bQ$ by \cite[Prop. 7.2]{friedman1986type-III}.
The set of
isomorphism classes of not necessarily $d$-semistable Type III
surfaces of the combinatorial type $\cX_0$ is isogenous to
$\Hom(\widetilde{\Lambda}, \bC^*)$.

For a given $\psi \in \Hom(\widetilde{\Lambda},\bC^*)$ the Picard group of the
corresponding surface is $\ker(\psi)$. The surface is $d$-semistable
iff the following $v$ divisors are Cartier:
$\xi_i = \sum_j D_{ij}-D_{ji} \in \widetilde{\Lambda}$. Note that $\sum_i
\xi_i=0$. Thus, the $d$-semistable surfaces correspond to the points
of multiplicative group $\Hom(\Lambda,\bC^*)$, where 
\begin{displaymath}
  \Xi = \frac{ \oplus_i \bZ\xi_i }{ (\sum_i \xi_i)},
  \qquad
  \Lambda = \coker(\Xi\to \widetilde{\Lambda}).
\end{displaymath}

By \cite{friedman1986type-III, friedman1984a-new-proof}, the Clemens-Schmid exact
sequence identifies $\Lambda$ as isometric to
$\{\delta,\lambda\}^\perp/\delta = \lambda^\perp\subset I^\perp/I$ or $J^\perp/J$
in Types III or II, respectively.

The dimension of the space of the $d$-semistable surfaces is
\begin{displaymath}
  \textstyle \sum_i \rho(V_i) - e - (v-1) = (2e-2v+24) - e - (v-1) = e-3v + 25 = 19
\end{displaymath}
because $e-3v = -6$ for a triangulation of a sphere.

\begin{lemma}\label{at-least-one} For all $\vec{a}$, there
is at least one $d$-semistable Kulikov surface
 $\cX_0(\vec{a})$ which admits an involution acting
 by switching the hemispheres of $B(\vec{a})$, and acts in the prescribed
 way on the equatorial components (cf. Theorem \ref{eq-invo}).\end{lemma}
 
 \begin{proof} Within any deformation type,
 the Kulikov surface $\cX_0$ for which $\psi=1$ is the one for which
 all moduli of components and gluing data is trivial: only $-1$
 (in toric coordinates) is blown up on a toric model of a component $(V_i,D_i)$, and
all double curves $D_{ij}$ and $D_{ji}$ are identified by gluing
 (in toric coordinates) by $-1$.
 
 For this surface, it is automatic that the equatorial edges $D_{ij}$ are glued in such a way
 that $R_i\cap D_{ij} = R_j\cap D_{ji}$. Thus, the union of the equatorial components
 admits an involution, and by uniqueness of this Kulikov surface, the involution
 extends across the two hemispheres.
 
 Finally, since $\psi=1$, the $d$-semistability condition is automatic.\end{proof}

 \begin{lemma}\label{d-involutive} Any $d$-semistable equisingular
 deformation of the Kulikov surface $\cX_0(\vec{a})$
from Lemma \ref{at-least-one} keeping $[R]$ Cartier smooths to a
degree $2$ K3 surface.  The space of such deformations
is isogenous to ${\rm Hom}(\Lambda/\tfrac{1}{3}\Z[R],\C^*)$ in Type III 
and ${\rm Hom}(\Lambda/\tfrac{1}{3}\Z[R],\cE)$ in Type II.\end{lemma}

\begin{proof} To prove the second part, observe that Corollary \ref{multiple3}
implies $[\Sigma_{\ia}]$ is $3$-divisible in $\{f,[\omega]\}^\perp/f$ 
and therefore $[R]$ is $3$-divisible in $\{\delta,\lambda\}^\perp/\delta=\Lambda$.
Since, $\tfrac{1}{3}[R]$ is Cartier on the surface with $\psi=1$, 
any deformation keeping $[R]$ Cartier, also keeps $\tfrac{1}{3}[R]$ Cartier.
Thus, any deformation keeping $[R]$ Cartier admits a line bundle $L$, with $L^2=2$.

 By \cite{friedman1983global-smoothings}, the analytic smoothing component $S$
of $\cX_0$ is $20$-dimensional and analytically-locally isomorphic to an extension 
of vector spaces
  \begin{displaymath}
    0\rightarrow \Hom(\Lambda,\C)\to S\to
    H^0(\mathcal{E}xt^1(\Omega^1_{\mathcal{X}_0},\mathcal{O}_{\mathcal{X}_0}))
    \rightarrow 0.
  \end{displaymath}
The first space forms the tangent space
to equisingular $d$-semistable deformations and by $d$-semistability, the third space has dimension one.
The hyperplane $S_{[R]}\subset S$ keeping $[R]$ Cartier fits into an exact subsequence
  \begin{displaymath}
    0\rightarrow \Hom(\Lambda/\tfrac{1}{3}\Z[R],\C)\to S_{[R]}\to
    \C
    \rightarrow 0.
  \end{displaymath}
and a deformation is first-order smoothing iff it has nonzero image in $\C$.
 So, there are smoothings keeping $[R]$ Cartier and admitting a line bundle $L$, $L^2=2$.
 The first part of the lemma follows. 
\end{proof}

\begin{lemma}\label{d-invo-2} Any equisingular deformation as in Lemma \ref{d-involutive}
admits an involution $\iota_0$ and a Cartier divisor $R$ representing the deformation
of $[R]$, realizing it as a Kulikov surface $\cX_0(\vec{a})$ 
coming from Constructions \ref{con:ias-to-kulikov3},  \ref{con:ias-to-kulikov2}.
\end{lemma}

\begin{proof} It suffices to prove that the deformations which keep the class $[R]$ Cartier
also admit an involution $\iota_0$ acting in the desired way on $\cX_0$ and are, therefore,
instances of Construction \ref{con:ias-to-kulikov3} ({\it caveat lector}: $R$ and
${\rm Fix}(\iota_0)$ need not be equal, see Remark \ref{ram-not-r}).

First, we suppose $B(\vec{a})$ is generic. In this case, we prove that a deformation
keeps $[R]$ Cartier iff it deforms the involution $\iota_0$. The reverse implication
is easy, as the Cartier divisor $R$ can be reconstructed from $\iota_0$---it is the pullback of
$\oR_i={\rm Fix}(\overline{\iota}_i)$ from the stable models of the equatorial components.

Next we show that the first-order $d$-semistable equisingular
deformations of $\cX_0$ keeping $[R]$
Cartier preserve the involution. The tangent space to the 
$d$-semistable equisingular deformations is
  $ \Hom(\Lambda,\bC)$. Here, the target vector
  space $\bC$ depends on the orientation of $\Gamma(\cX_0)$,
  so the involution $\iota_0$ acts on
  it as $(-1)$. Thus, the tangent space to deformations preserving the involution
  is $\Hom({\Lambda}/{\Lambda}_+,\C)$, where ${\Lambda}_+$ is the $(+1)$-eigenspace
  of $\iota_0$ on ${\Lambda}$.
  Obviously, $[R]\in {\Lambda}_+$. So all we have to show is that
  $(n_+,n_-):= (\rank {\Lambda}_+, \rank {\Lambda}_-) = (1,18)$.  We now compute the
  ranks of the $(+1)$ and $(-1)$-eigenspaces for all the lattices
  involved.

  Let us denote by $e_E$, $e_N$ the edges of the triangulation of the
  sphere that appear on the equator and in the northern
  hemisphere. One has $e=e_E+2e_N$. Similarly, we have $v=v_E+2v_N$
  for the vertices and $q=q_E+2q_N = 18+6$ for the charges.
  
  For an irreducible component the Picard rank is $\rho(V_i) = e^i+q^i-2$,
  where we only count the edges and charges belonging to $V_i$. For a
  symmetric pair of surfaces in the northern and southern hemispheres
  this gives $(e^i_N+q^i_N-2,\ e^i_N+q^i_N-2)$. For a surface in
  the equator: $(e^i_E+e^i_N+q^i_N-1,\ e^i_N+q^i_E+q^i_N-1)$.
  Adding up the eigenspaces for $\oplus \Pic V_i$ gives:
  \begin{displaymath}
    (2e_E+2e_N - v_E-2v_N + q_N, \quad  2e_N -v_E-2v_N + q_E + q_N).    
  \end{displaymath}
  For the lattice $\oplus \Pic D_{ij}$ the answer is of course
  $(e_E+e_N, e_N)$, and for $\Xi$ it is $(v_E+v_N-1, v_N)$.
  Putting
  this together, the ranks $(n_+,n_-)$ are 
  \begin{displaymath}
    \textstyle    \left(
    \frac12(e-3v) + \frac12(e_E-v_E) + q_N + 1,\quad
    \frac12(e-3v) - \frac12(e_E-v_E) + q_E + q_N
    \right).
  \end{displaymath}
  Using $e-3v=-6$ and $e_E = v_E$, $(n_+,n_-)=(q_N-2, q_E+q_N-3) = (1,18)$.

   When $\Gamma(\cX_0)$ is non-generic, the computation has an additional
  subtlety: The action of the involution on ${\rm Pic}(V_i)$ for an equatorial
  component varies (see Remark \ref{ram-not-r}) as one varies the involution pair 
  $(\oV_i,\oD_i+\epsilon \oR_i)$. But choosing a generic member of the
  space of $(V_i,D_i)$ admitting an involution $\iota_i$ gives
  a specified action on ${\rm Pic}(V_i)$, and for this generic choice,
  $(n_+,n_-)=(1,18)$.
  
  We now lift to higher order deformations, noting that these higher order lifts
  form a torsor over the first-order deformation space $\Hom(\Lambda,\C)$.
  Thus, the involution $\iota_0$ acts on higher order deformations by
  an affine-linear transformation, whose linear part fixes
  an $18$ dimensional subspace. It follows that the involution fixes
  an $18$-dimensional affine-linear subspace. So the involution
  can be lifted to higher order. Furthermore, these lifts are exactly
  those preserving the line bundle, since the fixed locus of the
  involution is Cartier. We conclude that deformations over an analytic   
  open subset of $\Hom(\Lambda,\C^*)$ have an involution. This open subset
  is Zariski dense, since the condition of having such an
  involution is algebraic.
  
  We now specialize from this sublocus of $\Hom(\Lambda,\C^*)$ of Kulikov surfaces with involution,
  observing that a limiting Kulikov surface $\cX_0$ still admits an involution, and the limiting
  class $[R]$ is still Cartier. Alternatively, we can cite \cite[Thm.~B]{alexeev17ade-surfaces}---the
  spaces of ADE surfaces are parameterized by tori $(\C^*)^n$,
 so by varying the moduli of the equatorial components and the edge gluings,
 the space of $\cX_0(\vec{a})$ fills out all of $(\C^*)^{18}$, as opposed to some
 Zariski open subset. 
 
In the Type II case, a dimension count shows that varying moduli of the ADE surfaces
and gluings from Construction \ref{con:ias-to-kulikov2}, with a fixed elliptic curve $E$,
produces an abelian variety isogenous to $E^{17}$. Thus, additionally varying $j(E)$
fills out the entire abelian variety fibration ${\rm Hom}(\Lambda/\tfrac{1}{3}\Z[R],\cE)$
over the modular curve. 
\end{proof}

\begin{theorem}\label{flat-limit-good} Let $\cX_0(\vec{a})$ be a $d$-semistable
Kulikov surface from Constructions \ref{con:ias-to-kulikov3}, \ref{con:ias-to-kulikov2}.
The Cartier divisor $R\subset \cX_0(\vec{a})$ is the flat limit of the
ramification divisors $\cR^*\subset \cX^*$ on any smoothing $\cX\to (C,0)$
keeping $[R]$ Cartier.

Furthermore, every degenerating family $(\cX^*,\cR^*)\to C^*$
of degree $2$ K3 surfaces with ramification divisor admits, after some finite base change,
a Kulikov model $\cX\to (C,0)$ of this form. \end{theorem}

\begin{proof} Let $\cX_0(\vec{a})$ be a generic element of ${\rm Hom}(\Lambda/\tfrac{1}{3}\Z[R],\C^*)$.
Then, each anticanonical pair $(V_i,D_i)$ with involution $\iota_i$ is generic, and the involution
$\iota_0$ acts on $\Lambda$ with eigenspaces of dimension $(1,18)$.
The argument of Lemma \ref{d-invo-2} shows that any smoothing
keeping $[R]$ Cartier preserves the involution, because $\iota_0$ acts on
$H^0(\mathcal{E}xt^1(\Omega^1_{\mathcal{X}_0},\mathcal{O}_{\mathcal{X}_0}))$ by negation.

So there is an involution $\iota$ on any Kulikov model $\cX$ smoothing $\cX_0$ which
keeps $[R]$ Cartier. This implies $\lim_{t\to 0}{\rm Fix}(\iota_t)={\rm Fix}(\iota_0)$ because
$\cX$ is smooth, so ${\rm Fix}(\iota)$ consists of only $0$- and $2$-dimensional components.
In particular, each $2$-dimensional component is irreducible and forms a flat family of divisors.
Furthermore since we are in the generic case, ${\rm Fix}(\iota_t)=R_t$ for all $t$ including $0$.
 
For $\cX_0(\vec{a})$ non-generic, i.e. having $(-2)$-curves in equatorial components,
and a general smoothing $\cX\to (C,0)$, the involution
$\iota_0$ does not extend to a regular involution $\iota$ of $\cX$. Instead, $\cX^*$ admits
an involution $\iota^*$ which only extends as a birational involution
$\iota\colon \cX\dashrightarrow \cX$ whose locus of indeterminacy is the $(-2)$-curves
in the equatorial components, and the restriction $\iota\big{|}_{\cX_0(\vec{a})}$ extends
to $\iota_0$. 

We conclude that the flat limit of $\cR^*$ differs from
${\rm Fix}(\iota_0)$ at most along the equatorial $(-2)$-curves, as does $R$, by construction.
So $\lim_{t\to 0}\cR_t = R+\sum a_iC_i$ for $C_i$ these $(-2)$-curves.
On the other hand $\cR_t^2=R^2$ by construction, $R\cdot C_i=0$, and $C_i$
span a negative-definite lattice, so we conclude that $a_i=0$ for all $i$. This completes
the proof of the first paragraph in the theorem.

To prove the second paragraph, we observe that after a finite base change,
any degeneration $\cX^*\to C^*$ has unipotent monodromy, and thus has 
some monodromy invariant $\lambda\in \fund$. After a further order
$2$ base change, we can ensure vector $\vec{a} \in \Z_{\geq 0}^{24}$ defined
by $(\lambda\cdot r_i)_{i\in \{0,\dots,23\}}$ satisfies the parity condition.
Let $\cX_0(\vec{a})$ be one of the corresponding Kulikov surfaces. By 
Theorem \ref{thm:monodromy3}, the monodromy invariant of a smoothing
$\cX(\vec{a})\to (C,0)$ is, in fact, equal to $\lambda$.

It remains to show that we can vary the continuous moduli of $\cX(\vec{a})$ 
until our given family $\cX^*\to C^*$ agrees with $\cX^*(\vec{a})$.
By Lemmas \ref{d-involutive}, \ref{d-invo-2} the $d$-semistable surfaces $\cX_0(\vec{a})$
keeping $[R]$ Cartier form a family $\mathfrak{X}_0(\vec{a})$ over (a variety isogenous to) $(\C^*)^{18}$ or $\cE^{\times 17}$ in Types III and II, respectively.

A result of Friedman-Scattone \cite[5.5,
  5.6]{friedman1986type-III} shows that the smoothing components of the fibers
  of $\mathfrak{X}_0(\vec{a})$ keeping $[R]$ Cartier
  can be glued together, to form a family $\mathfrak{X}(\vec{a})$ of smooth and Kulikov K3 surfaces with line bundle. The base of $\mathfrak{X}(\vec{a})$ is $19$-dimensional, and up to the action
  of a finite group, is identified with the toroidal extension $F_2\hookrightarrow F_2^\lambda$
  whose only cones are the $\Gamma$-orbits of the ray $\R_{\geq 0}\lambda$. The boundary
  divisor is exactly the base of $\mathfrak{X}_0(\vec{a})$, parameterizing the $d$-stable
  equisingular deformations of $\mathfrak{X}_0(\vec{a})$ keeping $[R]$ Cartier.
  Proposition \ref{right-monodromy} implies that $\cX^*\to C^*$ is a subfamily of $\mathfrak{X}(\vec{a})$, because the period map approximates a co-character $\lambda\otimes \C^*$
  which is completed at $0$ in $F_2^\lambda$.  The theorem follows. 
   \end{proof}

\section{Determination of stable models}
\label{sec:degenerate-k3s}

The goal of this section is to determine the KSBA stable limit of
 any one parameter degeneration $(\cX^*,\epsilon\cR^*)\to C^*$ in $F_2$,.
We describe
the components which will
appear on any stable limit of degree 2 K3 pairs $(X,\epsilon R)$, and
how they are glued.

\subsection{$ADE$ and $\wA\wD\wE$ surfaces}
\label{sec:ade-wade-surfaces}

In this section, we describe the classification of involution
pairs of \cite{alexeev17ade-surfaces} in more detail.

\begin{definition}
  Let $X$ be a normal projective surface with a reduced boundary divisor $D$
  and an involution $\iota\colon X\to X$, $\iota(D)=D$ such that 
  \begin{enumerate}
  \item $K_X+D\sim 0$ is a Cartier divisor linearly equivalent to 0,
  \item the ramification divisor $R$ is Cartier and ample, and
  \item the pair $(X, D+\epsilon R)$ has log canonical singularities
    for $0<\epsilon\ll1$. 
  \end{enumerate}
  Such pairs were called \emph{$(K+D)$-trivial polarized involution
    pairs} in \cite{alexeev17ade-surfaces}, where they are classified
  in terms of the decorated $ADE$ diagrams in Type III and extended
  \wade{} diagrams in Type II.
\end{definition}

The classification in \cite{alexeev17ade-surfaces} is done in terms of
the quotients $(Y,C)=(X,D)/\iota$ and the branch divisors
$B\subset Y$. The surface $X$ is recovered as a double cover
$\pi\colon X\to Y$ branched in $B$. Then $R=\pi\inv(B)$ and
$D=\pi\inv(C)$. 

In toric geometry a lattice polytope $P$ corresponds to a toric
variety $Y_P$ with an ample line bundle $L_P$.  For
many Dynkin diagrams there exists a polytope $P$ corresponding to it
in an obvious way. Then $Y$ is defined to be $Y_P$ and the branch
divisor $B$ to be a divisor in the linear system $|L_P|$.

For example, there are polytopes of shapes associated to $A^-_0$,
$D_5$, $E_7$ in Fig.~\ref{fig:diagrams}.  In general, the polytope $P$
has the following vertices:
\begin{enumerate}
\item $A_n$, $A_n^-$ for $n$ odd, resp. even: $(2,2)$, $(0,0)$, $(n+1,0)$.
\item $\mA_n^-$, $\mA_n$ for $n$ odd, resp. even:
  $(2,2)$, $(1,0)$, $(n+2,0)$.
\item $D_n$ and $D_n^-$: $(2,2)$, $(0,2)$, $(0,0)$, $(n-2,0)$.
\item $E_n$ ($\phmi E_6^-$, $\phmi E_7$, $\phmi E_8^-$):
  $(2,2)$, $(0,3)$, $(0,0)$, $(n-3,0)$. 
\item $\wD_{2n}$: $(0,2)$, $(0,0)$, $(2n-4,0)$, $(4,2)$.
\item $\wE_7$: $(0,4)$, $(0,0)$, $(4,0)$.
\item $\wE_8$: $(0,3)$, $(0,0)$, $(6,0)$.
\end{enumerate}

In the $ADE$ cases, the boundary $D$ has two components. 
In the \wade{} cases $D$ is an irreducible smooth elliptic curve.

The only nontoric initial cases are $\wA_{2n-1}$ and two small exotic
$\wA$ shapes:
\begin{enumerate}\setcounter{enumi}{7}
\item $\wA_{2n-1}$. The surface is cone $\Proj_E(\cO\oplus\cF)$ over
  an elliptic curve $E$, where $\cF$ is a line bundle of degree
  $n$. The boundary $C=0$ is empty and $B\in |-2K_Y|$.
\item $\wA_1^*$. Here, $Y=\bP^2$, the boundary $C$ is a smooth conic,
  and the branch curve $B$ is a possibly singular conic. If $B$ is
  smooth then $X=\bP^1\times\bP^1$; if $B$ is two lines then
  $X=\bP(1,1,2)$ with $R$ passing through the apex. Also included is a
  degenerate subcase when $\bP^2$ degenerates to $Y=\bP(1,1,4)$ with
  $R$ not passing through the apex.
\item $\wA_0^-$. Here, $Y=\bP(1,1,2)=\bF_2^0$.  The curve $C$ is the image
  of $\wC\in|s+3f|$ on~$\bF_2$.  The branch curve is a conic section
  disjoint from the apex.
\end{enumerate}

All other pairs are obtained from these by a process called
``priming'': making up to 4 weighted $(1,2)$-blowups $Y'\to Y$ at the
points of intersection of the branch divisor $B$ with the boundary
$C$, and then contracting parts of the boundary $C'$ on which
$-K_{Y'}$ is no longer ample. On the double cover $\pi\colon X\to Y$
this corresponds to an ordinary smooth blowup at a point of $R\cap D$
and then contracting parts of the boundary $D'$ on which $R'$ is no
longer ample.

These ``primed'' surfaces $Y'$ are usually not toric. But they are
toric in the $\pA_{2n-1}$, $\pA_{2n}^-$, $\pA'_{2n-1}$ and $D'_{2n}$
cases for which there are also lattice polytopes.
The polytope for $\pA_n$ is obtained from that for $A_n$ by
cutting a corner, a triangle with lattice sides $1,1,2$, which
corresponds to the weighted $(1,2)$ blowup. For the $\pA'_{2n+1}$
diagram the corners on both sides are cut. For the $D_{2n}'$ diagram,
the corner on the ``right'' side is cut.
See a concrete example of a polytope of shape $\pA_4$ in
Fig.~\ref{fig:diagrams}.

Examples \ref{example0}, \ref{example1}, \ref{example2}, \ref{example3}, \ref{example4}, \ref{example5}
describe explicitly the minimal resolutions of involution pairs $(X,D)$ of shapes
$A_0^-$, $A_{2n-1}$, $\pA'_{2n-1}$, $D_{2n}$, $D'_{2n}$, $E_n$
to smooth anticanonical pairs admitting an involution.

\begin{notation} In general, the involution pairs with elliptic diagram have two boundary components,
each isomorphic to $\mathbb{P}^1$, and meeting at two points to form a banana curve. 
We call the two nodes the {\it north} and {\it south poles}, and the two components
the {\it left} and {\it right components}. \end{notation}

\subsection{All degenerations of K3 surfaces of degree 2}
\label{sec:degn-types}

Recall that the {\it stable type} (Definition \ref{def:stable-type}) of an
elliptic or maximal parabolic subdiagram of $G_{\cox}$ was a
cyclically ordered list of its equatorial diagrams, with $\mA_0$ and $A_0^-$
diagrams inserted in the space between diagrams.

\begin{definition}\label{def:glued-pair}
Associated to every Type III stable type, we build a stable surface as follows:
For each diagram, we take an involution pair
$(X_k,D_k,\iota_k)$ with the corresponding label by \cite{alexeev17ade-surfaces} (see
  \ref{sec:ade-wade-surfaces}). Then, we successively glue the surfaces together
$(X,\iota)=\cup_k\,(X_k,D_k,\iota_k)$ along their boundary components,
identifying the right component of $D_k$ to the left component of $D_{k+1}$
and identifying the two north poles and the two south poles.
The intersection complex of the resulting surface is a sphere, decomposed
like the slices of an orange.
We glue in such a way that the ramification divisors
$R_k$ glue to a Cartier, ample divisor $R$.

In Type II, we do something similar for stable types $\wE_8^2$,
  $\wD_{10}\wE_7$, $\wD_{16}\wA_1$ by gluing the two
  components along elliptic curves. Finally, the stable surface
  associated to $\wA_{17}$ is simply the $\wA_{17}$ involution pair.
  
  The scheme-theoretic structure of the surface $(X,\iota)$
  is uniquely determined by the requirement that the gluing be seminormal
  with SNC double locus, see \cite[Prop.~5.3, Cor.~5.33]{kollar2013singularities-of-the-minimal}.
\end{definition}

\begin{example} Consider the empty subdiagram of $G_\cox$, corresponding
to the open cell of $\fund$. Its stable type is $(A_0^-\mA_0)^9$, see Fig.~\ref{fig:diagrams},
and the corresponding stable surface is the result of taking $18$ copies of
$(\mathbb{P}^2,L+C)$ with a line and conic, and successively gluing
conics to conics, and lines to lines, in such a way that
the fixed divisors, which are lines in each $\mathbb{P}^2$, glue into
a Cartier divisor.

This will be the unique maximal degeneration
of $\oF_2^\slc$. \end{example}

\begin{theorem}\label{thm:irred-comps}
  The stable limits of K3 pairs $(X,\epsilon R)$ of degree 2, polarized
  by the ramification divisor, are exactly the stable surfaces of
  Definition~\eqref{def:glued-pair}, formed from the union of involution pairs
  associated to a stable type of an elliptic or maximal
  parabolic subdiagram of $G_\cox$.
 
 More precisely, if the monodromy-invariant $\lambda$ of a one-parameter
 degeneration $\cX^*\to C^*$ lies in the relative interior
$\lambda\in \sigma^o$ of a cone $\sigma\in \mathfrak{F}_\cox$,
the stable limit is a stable surface associated to the stable type of
the subdiagram defining $\sigma$.
\end{theorem}

\begin{proof} Let $\cX^*\to C^*$ be a degeneration of degree $2$
K3 surfaces with monodromy invariant $\lambda$. By Theorem \ref{flat-limit-good},
there is some finite base change and an extension to a Kulikov model
$\cX\to (C,0)$ for which the central fiber $\cX_0=\cX_0(\vec{a})$ arises
from Constructions  \ref{con:ias-to-kulikov3}, \ref{con:ias-to-kulikov2}.
Here $\vec{a}\in \Z^{24}_{\geq 0}$ is the vector corresponding
to $\lambda\in \fund$ via $\lambda\cdot r_i=a_i$. The flat limit of $R$
is then a Cartier divisor on $\cX_0(\vec{a})$ which is empty
in the hemispheres of $\cX_0(\vec{a})$ and is the pullback of
$R_k ={\rm Fix}(\iota_k)$ on the involution pair $(X_k,D_k)$
which is the contraction of an equatorial component
(see Theorem \ref{eq-invo}, but note that
the involution pair is notated there as $(\oV_i,\oD_i)$).

In particular, $\cR\subset \cX$ is a relatively big and nef Cartier divisor
not containing any strata of $\cX_0$. By the proof of \ref{thm:extend-family-stable-k3},
the stable limit of $\cX^*\to C^*$ can be computed as
${\rm Proj}_C \bigoplus_{n\geq 0} \pi_*\cO_{\cX}(n\cR)$ which contracts
each hemisphere of $\cX_0$ to a single point, contracts the edges along
the equator by rulings, and contracts each equatorial vertex to the involution
pair $(X_k,D_k)$. The resulting stable surface is exactly that described
 in Definition \ref{def:glued-pair}.
 
 It is worth remarking that when a subdiagram of $G_{\cox}$ has an
 $A_1^\irr$ or $\wA_1^\irr$ component, there is an equatorial surface in $\cX_0$
 receiving two internal blow-ups switched by the involution, but the information
 of the location of these blow-ups is lost on the stable model,
 because they are contracted to points
 on a double curve.
 \end{proof}
 
 \subsection{Moduli of stable strata}

The following proposition should be compared with
\eqref{prop:strata-in-F2tor}. 

\begin{proposition}\label{prop:strata-in-F2slc}
  The strata in $\oF_2^\slc$ are as follows:
  \begin{enumerate}
  \item For a Type III stable type $G^\rel$,
    $\Str(G^\rel)$ is, up to an isogeny and a $W(G^\rel)$ action,
    the root torus $\Hom(R_{G^\rel}, \bC^*)$.
  \item For a Type II stable type $\wG^\rel$,
    $\Str(\wG^\rel)$ is, up to an isogeny
    and a $W(G^\rel)$ action, $\Hom(R_{G^\rel}, \cE) \simeq \cE^{17}$,
    where $\cE^{17}\to\cM_1$ is the self fiber product of the
    universal family of elliptic curves $\cE\to\cM_1$ over its moduli
    stack. 
  \end{enumerate}
\end{proposition}

\begin{proof}
  The parameter space for a Type III stratum is, up to a
  finite group, the product of the parameter spaces for the
  irreducible components $(X_k,D_k+\epsilon R_k)$, because
  the gluings of double curves which make $\cup_k R_k$ Cartier
  is, up to a finite group, unique. By
  \cite{alexeev17ade-surfaces}
   each of these is a quotient of torus
  isogenous to the root torus $\Hom(R_{G_k},\bC^*)$ by the Weyl group
  $W(G_k)$.
  Without the additional data of an involution,
  this result is essentially due to Gross-Hacking-Keel
  \cite{gross2015moduli-of-surfaces}.
\va{I added this at Alan's suggestion}

  The same works for Type II strata. Such a stratum is
  a finite quotient of the fiber product over $\cM_1$
  of the period domains of involution pairs with smooth elliptic
  anticanonical divisor. The {\it period point} of an
  anticanonical pair $(V_k,D_k)$
  is the element of ${\rm Hom}(D^\perp,D)$ which sends
  $L\mapsto L\big{|}_{D_k} \in {\rm Pic}^0(D_k)=D_k$. The $\iota$-invariant
  period points form an abelian subvariety isogenous to ${\rm Hom}(R_{G_k},D_k)$
  and the moduli space for each component is the quotient by
  the $\iota$-invariant ``admissible isometries" of $H^2(V_k,D_k)$,
  c.f. \cite{friedman2015on-the-geometry}, 
  which in our case is the Weyl group $W(G_k)$.
  
  Fixing an elliptic curve
  $D=D_k$ and taking the product of moduli of components
  produces the quotient by $W(G^{\rel})$ of
  an abelian variety isogenous to ${\rm Hom}(R_{G^\rel},D)$.
  Finally, we may vary the moduli of $D$ over $\cM_1$, giving
  the fiber product. \end{proof}

\section{Proof of main theorems}
\label{sec:family}

We now assemble the ingredients from the above sections
to prove the main theorems. In the proof of Theorem \ref{flat-limit-good},
we defined the toroidal extension $F_2\hookrightarrow F_2^\lambda$ whose
fan consists of the $\Gamma$-orbit of a ray $\R_{\geq 0}\lambda$, and
a family $\mathfrak{X}(\vec{a})\to \mathfrak{U}(\vec{a})$ of Kulikov and smooth K3 surfaces,
with $\mathfrak{U}(\vec{a})$ a finite cover of $F_2^\lambda$ and $\vec{a} = (\lambda\cdot r_i)_{i\in\{0,\dots,23\}}$ assumed to satisfy the parity condition.
Recall that the boundary divisor of $\mathfrak{U}(\vec{a})$ was isogenous to ${\rm Hom}(\Lambda/\tfrac{1}{3}\Z[R],\C^*)\simeq (\C^*)^{18}$
or ${\rm Hom}(\Lambda/\tfrac{1}{3}\Z[R],\cE)\simeq \cE^{\times 17}$, where $\Lambda$ (see Section \ref{sec:defs-kulikov-invo}) is the lattice $\lambda^\perp\subset I^\perp/I$
or $J^\perp/J$.

\begin{theorem}\label{thm:partial-comp}
  Let $\lambda\in \sigma_G^o\cap N$ lie in the relative interior
  of a cone of $\fF^\cox$ for a subdiagram $G\subset G_\vin$. Assume
  $\vec{a} = (\lambda\cdot r_i)_{i\in\{0,\dots,23\}}$ satisfies the parity condition.
  Then, the classifying map $$\mathfrak{U}(\vec{a})\dashrightarrow \oF_2^\slc$$
  is a morphism, and the induced morphism on the boundary divisor is (isogenous to)
  the restriction map ${\rm Hom}(\Lambda/\tfrac{1}{3}\Z[R],\C^*\textrm{ or }\cE)\to
  {\rm Hom}(R_{G^\rel},\C^*\textrm{ or }\cE)$
for the natural inclusion $R_{G^\rel}\hookrightarrow R_G\hookrightarrow \Lambda/\frac{1}{3}\Z[R]$,
followed by the quotienting by a finite group.
\end{theorem}

\begin{proof}
The proof is essentially the same as Theorem \ref{thm:irred-comps}, except we
don't restrict to a one-parameter subfamily. Let $\mathfrak{R}\subset \mathfrak{X}(\vec{a})$ be
the universal ramification divisor and let $\mathfrak{L} = \cO_{\mathfrak{X}(\vec{a})}(\mathfrak{R})$
be the corresponding line bundle, which is relatively big and nef. The family of divisors $\mathfrak{R}$ exists because the flat limit of the ramification divisor on any Kulikov model is $R\subset \cX_0(\vec{a})$ by Theorem
\ref{flat-limit-good}.

By Shepherd-Barron \cite{shepherd-barron1981extending-polarizations}
  the higher cohomology groups of $\mathfrak{L}^n$ are zero on every fiber,
  so for $n\ge4$, $\mathfrak{L}^n$ defines a contraction to a
  model with an ample line bundle. Since
  the divisors $\mathfrak{R}$ do not contain strata on any fiber by construction,
  the fibers in the contracted family are stable pairs
  $$(\overline{\mathfrak{X}}(\vec{a}),\epsilon \overline{\mathfrak{R}})\to \mathfrak{U}(\vec{a})$$
  and the fibers over the boundary divisor have stable type determined by $G^\rel$,
  by Theorem \ref{thm:irred-comps}. This proves that the classifying map is a morphism.
  
  So the classifying map induces a morphism from ${\rm Hom}(\Lambda,\C^*\textrm{ or }\cE)$
  to the slc stratum ${\rm Str}(G^\rel)$ of Proposition \ref{prop:strata-in-F2slc}.
  We claim that this morphism factors through the natural map of tori induced by
  the inclusions $R_{G^\rel}\hookrightarrow R_G\hookrightarrow \Lambda$---note that
  $R_G\hookrightarrow \Lambda$ because $\lambda\in \sigma\implies \sigma^\perp\subset \lambda^\perp\subset I^\perp/I \implies R_G\subset \Lambda.$

 Let $(V_i,D_i)$ be an equatorial component of $\mathcal{X}_0$, and define
 $$\Lambda_i := {\rm span}\{D_{ij}\}^\perp\subset H^2(V_i,\Z).$$ The 
 period domain \cite{gross2015moduli-of-surfaces, friedman2015on-the-geometry}
 for anticanonical pairs $(V_i,D_i)$ is ${\rm Hom}(\Lambda_i,\C^*)$ while
 the corresponding period domain for involution pairs \cite{alexeev17ade-surfaces}
 is a torus with character lattice isogenous to $R_{G_i}\subset \Lambda_i$
 (more canonically a quotient), consisting of periods of anticanonical pairs $(V_i,D_i)$ accepting an involution
 $\iota_i$. Finally, observe that there is an inclusion
 $\textstyle \bigoplus_i \Lambda_i\hookrightarrow \Lambda$ as every class in $\Lambda_i$
 can be extended by $0$ to a numerically Cartier class on $\cX_0$. This map
 induces the inclusion $R_G\hookrightarrow \Lambda/\tfrac{1}{3}\Z[R]$.
 We conclude that the map on moduli
 ${\rm Hom}(\Lambda/\tfrac{1}{3}\Z[R],\C^*\textrm{ or }\cE)\to {\rm Str}(G^\rel)$
 is induced by the claimed map of lattices. \end{proof}

\begin{theorem}\label{thm:tor-slc}
  The rational map $\varphi\colon\oF_2^\semi \ratmap \oF_2^\slc$ is regular,
  and is the normalization map.
\end{theorem}
\begin{proof}
  We first prove that the rational map $\varphi'\colon \oF_2^\tor
  \ratmap \oF_2^\slc$ is regular. For any ray $\bR_{\ge0}\lambda$ of
  the fan the map extends over the interior of the corresponding
  divisor of $\oF_2^\tor$ by Theorem~\ref{thm:partial-comp}. 
So, if there is any
  indeterminacy locus of $\varphi'$ then it is contained in the Type
  III locus.

  Suppose that $\varphi'$ is not regular. Let
  $\oF_2^\tor\xleftarrow{\ p\ } Z\xrightarrow{\ q\ }\oF_2^\slc$ be a
  resolution of singularities of $\varphi'$.  The preimage
  $Z_x=p\inv(x)$ of a point $x\in \oF_2^\tor$
  is projective. By \eqref{thm:irred-comps},
  \eqref{prop:strata-in-F2slc}  one has 
  $q(Z_x) \subset \Str(G^\rel)$. But by~\eqref{prop:strata-in-F2slc}
  every Type III stratum in $\oF_2^\slc$ is affine. So the map
  $Z_x\to \oF_2^\slc$ is constant. We conclude by
  Lemma~\ref{lem:extend-rat-map}.

  The morphism $\varphi'$ factors through
  $\varphi\colon\oF_2^\semi\to \oF_2^\tor$: In fact, by Theorems
  \ref{thm:partial-comp} and \ref{tor-to-semi}, the curves contracted
  by $F_2^\lambda\to \oF_2^\semi$ and $F_2^\lambda\to \oF_2^\slc$ are
  the same, giving the claim.  Then $\varphi$ is a birational morphism
  with finite fibers, so by Zariski's main theorem, it is the
  normalization.
\end{proof}

\begin{corollary}
 The Stein factorization of $\oF_2^\tor \to \oF_2^\slc$ is
  $\oF_2^\tor \to \oF_2^\semi \to \oF_2^\slc$.
\end{corollary}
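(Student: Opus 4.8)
The plan is to deduce both assertions from three facts: (i) $\oF_2^\tor$ is a normal variety; (ii) the contraction $\rho\colon \oF_2^\tor\to\oF_2^\semi$ coming from the fact that $\fF^\semi$ coarsens $\fF^\cox$ (Corollary~\ref{cor:Fsemi-Fcox}) satisfies $\rho_*\cO_{\oF_2^\tor}=\cO_{\oF_2^\semi}$; and (iii) the morphism $\nu\colon\oF_2^\semi\to\oF_2^\slc$ of the theorem just proved is finite and birational. Granting these, the rest is pure formalism. By (i) and (ii) the variety $\oF_2^\semi$ is normal, being the image of a normal variety under a proper surjection with trivial direct image of the structure sheaf; since a finite birational morphism from a normal variety is the normalization, (iii) gives $\oF_2^\semi=(\oF_2^\slc)^\nu$. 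Writing $\varphi=\nu\circ\rho$, one computes $\varphi_*\cO_{\oF_2^\tor}=\nu_*\rho_*\cO_{\oF_2^\tor}=\nu_*\cO_{\oF_2^\semi}$; as $\nu$ is affine, $\underline{\mathrm{Spec}}_{\oF_2^\slc}\bigl(\varphi_*\cO_{\oF_2^\tor}\bigr)=\oF_2^\semi$, so $\oF_2^\tor\xrightarrow{\ \rho\ }\oF_2^\semi\xrightarrow{\ \nu\ }\oF_2^\slc$ is exactly the Stein factorization of $\varphi$.

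For (i), any toroidal compactification of a locally symmetric variety is normal, being constructed from normal toric varieties by étale-local (analytically local) quotients by finite groups; this applies to $\oF_2^\tor$. For (iii), finiteness of $\nu$ was established in the preceding theorem, and $\nu$ is an isomorphism over the dense open $F_2\subset\oF_2^\slc$, hence birational. Fact (ii) is the only point requiring care: it says that Looijenga's semitoric compactification attached to the coarsened semifan $\fF^\semi$ is the Stein contraction of the toroidal compactification attached to its refinement $\fF^\cox$. I would extract this from Looijenga's construction in \cite{looijenga1985semi-toric, looijenga2003compactifications-defined2}, where the semitoric space is built, étale-locally over the boundary, as a partial toroidal quotient whose affine charts are spectra of the subrings of invariants picked out by the coarser semifan, so that it receives precisely such contractions from its toroidal refinements. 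As a fallback avoiding a direct appeal to (ii), one may instead take the Stein factorization $\oF_2^\tor\to W\to\oF_2^\slc$ of $\varphi$ at the outset: then $W$ is automatically normal (its structure sheaf is a direct image of $\cO_{\oF_2^\tor}$, whose stalks are integrally closed), and $W\to\oF_2^\slc$ is finite and birational, so $W=(\oF_2^\slc)^\nu$; since Looijenga's $\oF_2^\semi$ is itself normal and, by (iii), finite birational over $\oF_2^\slc$, both $W$ and $\oF_2^\semi$ are the normalization, hence canonically identified, and the induced maps $\oF_2^\tor\to W\cong\oF_2^\semi$ coincide because they agree on the dense open $F_2$ and the target is separated.

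The main obstacle is fact (ii): showing that the natural morphism $\oF_2^\tor\to\oF_2^\semi$ provided by the coarsening of fans has connected fibers and trivial $\rho_*\cO$, rather than merely being proper and surjective. I expect this to follow directly from the way semitoric compactifications are assembled, but it is the step where one must look inside Looijenga's construction rather than use it as a black box; the fallback route above sidesteps it at the cost of quoting normality of the semitoric compactification instead.
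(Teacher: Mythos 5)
Your argument is correct and, in its fallback form, is exactly the paper's proof: the paper quotes normality of $\oF_2^\semi$ (a general property of Looijenga's semitoric compactifications), combines it with the finiteness and birationality of $\oF_2^\semi\to\oF_2^\slc$ from the preceding theorem to identify $\oF_2^\semi$ with the normalization, and then deduces the Stein factorization from the connectedness of the fibers of $\oF_2^\tor\to\oF_2^\semi$ (which is clear from the stratum-by-stratum description, where the fibers are the contracted subtori $T^\tor(G^\irr)$). So the fact (ii) you flag as the main obstacle is never needed in the form $\rho_*\cO_{\oF_2^\tor}=\cO_{\oF_2^\semi}$ as an input; it follows from normality of the target plus connected fibers, precisely as your fallback anticipates.
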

\begin{proof}
  This follows from the fact that the fibers of $\oF_2^\tor\to \oF_2^\semi$ are
  connected.
\end{proof}

\begin{corollary}\label{stack}
  There is a regular map $\ocF_2^\semi \to \ocF_2^\slc$ of Deligne-Mumford stacks,
  for an appropriate choice of stack structure on $\oF_2^\semi$.\end{corollary}

\begin{remark} Corollary \ref{stack} is essentially a tautology by pulling
back the stack structure, but it is subtle from the perspective of arithmetic quotients:
\begin{enumerate}
\item Even the interior is not the stack quotient $[\bD:\Gamma]$.
The Heegner divisors associated to roots $\beta\in h^\perp$ have inertia in
$[\bD:\Gamma]$ but not in $F_2$. 
\item Due to the presence of generic automorphisms on $\slc$ strata,
we need a {\it stacky fan}: For each cone $\sigma\in \fF^\cox$ we
must choose a sublattice of ${\rm span}(\sigma)\cap N$, which introduces
inertia at the toric boundary components.
\end{enumerate}
  \end{remark}
  
Similar to the enumeration of the strata of $\oF_2^\tor$
in~\eqref{lem:parabolic-subdiagrams} and \eqref{lem:numbers-strata-Ftor}, 
by looking at the subdiagrams of $G_\vin$ without irrelevant connected
components only, mod $S_3$ or $D_9$, one can enumerate the strata of
$\oF_2^\semi$ or $\oF_2^\slc$. In particular, we have:

\begin{lemma}
  Both in $\oF_2^\semi$ and in $\oF_2^\slc$ there are $38$ boundary
  divisors, of which~$3$ are of Type II and $35$ are of Type III.
\end{lemma}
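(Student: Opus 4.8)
The plan is to count the codimension~$1$ boundary strata directly from the combinatorics. First I would note that $\oF_2^\semi\to\oF_2^\slc$ is finite and birational onto its image with $\oF_2^\semi$ normal, so it is a bijection on irreducible divisors; hence it suffices to enumerate the boundary divisors of $\oF_2^\semi$. By Proposition~\ref{prop:strata-in-F2slc}, Corollary~\ref{cor:Fsemi-Fcox}, and Theorem~\ref{thm:faces-fund-chamber}, these are indexed by the subdiagrams $G\subset G_\vin$ with $G=G^\rel$ (i.e. with no irrelevant connected component) that are either maximal parabolic, giving Type~II strata $\Str(\wG)\cong\Hom(R_{G},\cE)$, or elliptic of rank~$18$, giving Type~III strata $\Str(G)\cong\Hom(R_{G},\bC^*)$, subject to the stratum actually having codimension~$1$ (rank $17$, resp.\ $18$), and taken modulo $S_3$, or modulo $D_9$ for diagrams contained in the outside $18$-cycle. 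The two counts are then carried out separately.

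For Type~II, I would invoke Lemma~\ref{lem:parabolic-subdiagrams}: mod $S_3$ there are exactly four maximal parabolic subdiagrams, $\wA_{17}$, $\wD_{10}\wE_7$, $\wE_8^2\wA_1$, $\wD_{16}\wA_1$. Of these the first three are relevant --- in $\wA_{17}$ every vertex lies in $\{0,\dots,17\}$, in $\wD_{10}\wE_7$ no component lies in $\{18,\dots,23\}$, and in $\wD_{16}\wA_1$ the component $\wA_1^*=[\,3,23\,]$ contains the relevant vertex $3$ --- and each gives a codimension~$1$ stratum since $R_G$ has rank~$17$. By contrast $\wE_8^2\wA_1$ has the irrelevant component $\wA_1^\irr=[\,20,23\,]$, so its relevant content is $\wE_8^2$ of rank~$16$ and $\Str(\wE_8^2)$ has codimension~$2$; this is exactly the exceptional behaviour recorded earlier. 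Since $\wA_{17}$ is the unique parabolic inside the $18$-cycle, the $D_9$-identification produces nothing new. This yields $3$ Type~II boundary divisors.

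For Type~III, I would start from Lemma~\ref{lem:numbers-strata-Ftor}: mod $S_3$ there are $99$ elliptic subdiagrams of rank~$18$, indexing the rays of $\fF^\cox$ meeting $\cC$. By Corollary~\ref{cor:Fsemi-Fcox} two such rays collapse in $\fF^\semi$ iff their diagrams have equal relevant content, and the resulting stratum of $\oF_2^\slc$ stays codimension~$1$ iff that content still has rank~$18$, i.e. iff the diagram is itself relevant; distinct relevant rank-$18$ diagrams have distinct relevant contents (namely themselves), so no two of them collapse. No rank-$18$ elliptic subdiagram lies in the $18$-cycle --- the only $18$-vertex subdiagram of that cycle is the whole cycle $\wA_{17}$, which is parabolic of rank~$17$ --- so the $D_9$ refinement is irrelevant here and the count is again mod $S_3$. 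It then remains to enumerate, by the same method as Lemma~\ref{lem:numbers-strata-Ftor}, which of the $99$ rank-$18$ elliptic subdiagrams have no irrelevant connected component; discarding the $64$ that do leaves $35$. Adding the $3$ Type~II divisors gives $38$ in total.

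The main obstacle is this last enumeration: identifying the $64$ rank-$18$ elliptic subdiagrams with an irrelevant component (e.g.\ those of the form ``relevant rank-$17$ diagram'' $\sqcup\,[\,j\,]$ with $j\in\{18,\dots,23\}$, and their analogues) and verifying that exactly $35$ remain. This is a finite combinatorial check, done by computer in the same way as the tally in Lemma~\ref{lem:numbers-strata-Ftor}, but it is the only step that is not a short deduction from the structural results already established.
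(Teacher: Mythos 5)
Your proposal is correct and follows essentially the same route as the paper, which itself proves this lemma by direct enumeration of the subdiagrams of $G_\vin$ without irrelevant connected components, mod $S_3$ (or $D_9$); your structural reductions --- the $3$ Type~II divisors with $\wE_8^2\wA_1$ dropping to codimension~$2$, and the reduction of Type~III to counting relevant rank-$18$ elliptic subdiagrams among the $99$ of Lemma~\ref{lem:numbers-strata-Ftor} --- are exactly what is needed. The remaining count of $35$ is the same finite computer check the paper implicitly performs, so no further gap.
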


\begin{remark} The normalization map $\oF_2^\semi\to \oF_2^\slc$ is
not the identity map. For instance, when a diagram $G^\rel$ is entirely
contained in the $18$-cycle $0,\dots,17$, the resulting stable pair stratum
is the same for all diagrams in the $D_9$ dihedral group orbit of $G^\rel$.
For semi-toric strata, only diagrams related by $D_3\simeq S_3$ are identified.
\end{remark}

\bibliographystyle{amsalpha}

\def\cprime{$'$}
\providecommand{\bysame}{\leavevmode\hbox to3em{\hrulefill}\thinspace}
\providecommand{\MR}{\relax\ifhmode\unskip\space\fi MR }
\providecommand{\MRhref}[2]{%
  \href{http://www.ams.org/mathscinet-getitem?mr=#1}{#2}
}
\providecommand{\href}[2]{#2}

\end{document}